\newtheorem{theorem}{Theorem}[section]
\newtheorem{lemma}{Lemma}[section]
\newtheorem{definition}{Definition}[section]
\newtheorem{proposition}{Proposition}[section]
\newtheorem{assumption}{Assumption}[section]
\newtheorem{remark}{Remark}[section]
\newenvironment{proof}{{\noindent \bf Proof:}}{\hfill$\Box$\medskip}
\definecolor{lred}{rgb}{1,0.8,0.8}
\definecolor{lblue}{rgb}{0.8,0.8,1}
\definecolor{dred}{rgb}{0.6,0,0}
\definecolor{dblue}{rgb}{0,0,0.5}
\definecolor{dgreen}{rgb}{0,0.5,0.5}
\title{An Inexact Projected Regularized Newton Method for Fused Zero-norms Regularization  Problems}
\author{
 Yuqia Wu\footnote{Department of Applied Mathematics, The Hong Kong Polytechnic University, 
 	Hong Kong (\href{mailto:yuqia.wu@connect.polyu.hk}{yuqia.wu@connect.polyu.hk}).} ,\ \ 
 Shaohua Pan\footnote{School of Mathematics, South China University of Technology, Guangzhou (\href{mailto:shhpan@scut.edu.cn}{shhpan@scut.edu.cn}).} ,\ \
 Xiaoqi Yang\footnote{Department of Applied Mathematics, The Hong Kong Polytechnic University, 
 	Hong Kong (\href{mailto:mayangxq@polyu.edu.hk}{mayangxq@polyu.edu.hk}).} .
 }
\date{}
\begin{document}
 \maketitle
\begin{abstract}
We are concerned with structured $\ell_0$-norms regularization problems, with a twice continuously differentiable loss function and a box constraint. This class of problems have a wide range of applications in statistics, machine learning and image processing. To the best of our knowledge, there is no effective algorithm in the literature for solving them. In this paper, we first obtain a polynomial-time algorithm to find a point in the proximal mapping of the fused $\ell_0$-norms with a box constraint based on dynamic programming principle. We then propose a hybrid algorithm of proximal gradient method and inexact projected regularized Newton method to solve structured $\ell_0$-norms regularization problems. The whole sequence generated by the algorithm is shown to be convergent by virtue of a non-degeneracy condition, a curvature condition and a Kurdyka-{\L}ojasiewicz property. A superlinear convergence rate of the iterates is established under a locally H\"{o}lderian error bound condition on a second-order stationary point set, without requiring the local optimality of the limit point. Finally, numerical experiments are conducted to highlight the features of our considered model, and the superiority of our proposed algorithm.
\end{abstract}

\noindent
{\bf Keywords}: fused $\ell_0$-norms regularization problems, polynomial-time proximal gradient algorithm; inexact projected regularized Newton algorithm; global convergence; superlinear convergence.

\section{Introduction}\label{sec1}
Given a matrix $B\in\mathbb{R}^{p\times n}$, $\lambda_1 > 0$, $\lambda_2> 0$, $l \in \mathbb{R}^n_-$ and $ u\in\mathbb{R}^n_+$, we are interested in the following structured $\ell_0$-norms regularization problem with a box constraint:
\begin{equation}\label{model}
 \min_{x\in\mathbb{R}^n}\ F(x)\!:= f(x) + \lambda_1 \|Bx\|_0 + \lambda_2\|x\|_0 \ \ {\rm s.t.} \ \ l \leq x \leq u,
\end{equation}
where $f\!:\mathbb{R}^n\rightarrow \mathbb{R}$ is a twice continuously differentiable function, $\|\cdot\|_0$ denotes the $\ell_0$-norm (or cardinality) function. This model encourages sparsity of both variable $x$ and its linear transformation $Bx$. Throughout this paper, we define $g(\cdot) := \lambda_1\|B\cdot\|_0 +\lambda_2 \|\cdot\|_0 + \delta_{\Omega}(\cdot)$ and $\Omega:= \{x\in\mathbb{R}^n \ \lvert \ l\leq x\leq u\}$, where $\delta_\Omega(\cdot)$ denotes the indicator function of $\Omega$. 

\subsection{Motivation}
Given a data matrix $A\in\mathbb{R}^{m\times n}$ and its response $b\in\mathbb{R}^m$, the usual regression model is to minimize $f(x)\!:=\!h(Ax-b)$, where $h$ is a smooth function attaining its optimal solution at the origin. For the case where $h(\cdot) \!=\! \frac{1}{2}\|\cdot\|^2$, $f$ is the least-squares loss function of the linear regression. It is known that one of the formulations for finding a sparse vector while minimizing $f$ is the following $\ell_0$ regularization problem
\begin{equation}\label{L0-model}
	\min_{x\in\mathbb{R}^n} \ f(x) + \lambda_2 \|x\|_0,
\end{equation}
where the $\ell_0$-norm term shrinks some small coefficients to $0$ and identifies a set of influential components. However, the $\ell_0$-norm penalty only takes the sparsity of $x$ into consideration, but ignores its spatial nature, which sometimes needs to be considered in real-world applications. 
For example, in the context of image processing, the variables often represent the pixels of images, which are correlated with their neighboring ones. To recover the blurred images, Rudin et al. \cite{rudin92} took into account the differences between adjacent variables and used the total variation regularization, which penalizes the changes of the neighboring pixels and hence encourages smoothness in the solution.
Moreover, Land and Friedman \cite{land97} studied the phoneme classification on TIMIT database (Acpistoc-Phonetic Continuous Speech Corpus, NTIS, US Dept of Commerce), which consists of 4509 32ms speech frames and each speech frame is represented by 512 samples of 16 KHz rate. This database is collected from 437 male speakers. Every speaker provided approximately two speech frames of each of five phonemes, where the phonemes are ``sh'' as in ``she'', ``dcl'' as in ``dark'', ``iy'' as the vowel in ``she'', ``aa'' as the vowel in ``dark'', and ``ao'' as the first vowel in ``water''. Since each phoneme is composed of a series of consecutively sampled points, there is a high chance that each sampled point is close or identical to its neighboring ones. For this reason, Land and Friedman \cite{land97} considered imposing a fused penalty on the coefficients vector $x$, and proposed the following problems with zero-order variable fusion and first-order variable fusion respectively to train the classifier:
\begin{align}
	\min_{x\in\mathbb{R}^n} \frac{1}{2} \|Ax-b\|^2 + \lambda_1\|\widehat{B}x\|_0, \label{fused-L0} \\
	\min_{x\in\mathbb{R}^n} \frac{1}{2} \|Ax-b\|^2 + \lambda_1\|\widehat{B}x\|_1, \label{fused-L1}
\end{align}
where $A\in\mathbb{R}^{m\times n}$ represents the phoneme data, $b\in\mathbb{R}^m$ is the label vector, $\widehat{B}\in\mathbb{R}^{(n\!-\!1)\times n}$ with $\widehat{B}_{ii}= 1 \ {\rm and} \  \widehat{B}_{i,i+1} =-1$ for all $i \in\{1,\ldots,n\!-\!1\}$ and $\widehat{B}_{ij} = 0$ otherwise. If $f(\cdot) \!=\! \frac{1}{2}\|A\cdot-b\|^2$ and $B = \widehat{B}$, then we call \eqref{model} a fused $\ell_0$-norms regularization problem with a box constraint. 

Additionally taking the sparsity of $x$ into consideration, Tibshirani et al. \cite{tibshirani05} proposed the fused Lasso, given by
\begin{equation}\label{fused-lasso-Lag}
	\min_{x\in\mathbb{R}^n} \ \frac{1}{2} \|Ax-b\|^2   +\lambda_1 \|\widehat{B}x\|_1 + \lambda_2 \|x\|_1,
\end{equation}
and presented its nice statistical properties. Friedman et al. \cite{friedman07} demonstrated that the proximal mapping of $\lambda_1 \|\widehat{B}x\|_1 + \lambda_2 \|x\|_1$ can be obtained through a process, which is known as ``prox-decomposition" later. Based on the accessibility of this proximal mapping, various algorithms can efficiently address model \eqref{fused-lasso-Lag}, see \cite{liu09,liu10,li18b,molinari19}. In particular, Li et al. \cite{li18b} proposed a semismooth Newton augmented Lagrangian method (SSNAL) to solve the dual of \eqref{fused-lasso-Lag}. The numerical results presented in their study indicate that SSNAL is highly efficient.

It was claimed in \cite{land97} that both \eqref{fused-L0} and \eqref{fused-L1} perform well in signal regression, but the zero-order fusion one produces simpler estimated coefficient vectors. This observation suggests that model \eqref{model} with $f = \frac{1}{2}\|A\cdot -b\|^2$ and $B = \widehat{B}$ may be able to effectively find a simpler solution while performs well as the fused Lasso does. Compared with regularization problems using $\ell_0$-norm, those using $\|Bx\|_0$ regularization remain less explored in terms of algorithm development. 
According to \cite{land97}, the global solution of \eqref{fused-L0} is unattainable. 
However, one of its stationary points can be obtained. In fact, Jewell et al. \cite{jewell20} has revealed by virtue of dynamic programming principle that a point in the proximal mapping of $\lambda_1\|\widehat{B}\cdot\|_0$ can be exactly determined within polynomial time, which allows one to use the well-known proximal gradient (PG) method to find a stationary point of problem \eqref{fused-L0}. 
However, the highly nonconvex and nonsmooth nature of model \eqref{model} presents significant challenges in computing the proximal mapping of $g$ when $B=\widehat{B}$ and in developing effective optimization algorithms for solving it. As far as we know, no specific algorithms have yet been designed to solve these challenging problems. 

Another motivation for this work comes from our previous research \cite{wu22}. In that work, we considered a model which replaces the $\ell_0$-norm in \eqref{L0-model} by $\|x\|_q^q$, where $q\in(0,1)$ and $\|x\|_q$ is the $\ell_q$ quasi-norm of $x$, defined as $\|x\|_q = \big( \sum_{i=1}^n \lvert x_i\rvert^q\big)^{1/q}$. For this class of nonconvex and nonsmooth problems, we proposed a hybrid of PG and subspace regularized Newton methods (HpgSRN), which in Newton step restricts the subproblem on a subspace within which the objective function is smooth, and thus a regularized Newton method can be applied. It is worth noting that the subspace is induced by the support of the current iterate $x^k$. PG step is executed in every iteration, but it does not necessarily run a Newton step unless a switch condition is satisfied. A global convergence was established under a curvature condition and the Kurdyka-{\L}ojasiewicz (KL) property \cite{Attouch10} of the objective function, and a superlinear convergence rate was achieved under an additional local 
second-order error bound condition. Due to the desirable convergence result and numerical performance of HpgSRN, we aim to adapt a similar subspace regularized Newton algorithm to solve \eqref{model}, in which the subspace is induced by the combined support of $Bx^k$ and $x^k$.

\subsection{Related work}
In recent years, many optimization algorithms have been well developed to solve the $\ell_0$-norm regularization problems of the form \eqref{L0-model}, which includes iterative hard thresholding \cite{herrity06, blumensath08, blumensath10, Lu14B}, the penalty decomposition \cite{lu13}, the smoothing proximal gradient method \cite{bian20}, the accelerated iterative hard thresholding \cite{wu2020} and NL0R \cite{zhou21}. Among all these algorithms, NL0R is the only Newton-type method, which employs Newton method to solve a series of stationary equations confined within the subspaces identified by the support of the solution obtained by the proximal mapping of $\lambda_2\|\cdot\|_0$.

The PG method is able to effectively process model \eqref{model} if the proximal mapping of $g$ can be precisely computed. The PG method belongs to first-order methods, which have a low computation cost and weak global convergence conditions, but they achieve at most a linear convergence rate. On the other hand, the Newton method has a faster convergence rate, but it can only be applied to minimize sufficiently smooth objective functions. In recent years, there have been active investigations into the Newton-type methods for nonsmooth composite optimization problems of the form
\begin{equation}\label{composite}
	\min_{x\in \mathbb{R}^n} \ \Psi(x):=\psi(x) + \varphi(x),
\end{equation}
where $\varphi : \mathbb{R}^n \rightarrow \overline{\mathbb{R}}:=(-\infty, + \infty]$ is proper lower semicontinuous, and $\psi(x)$ is twice continuously differentiable on an open subset of $\mathbb{R}^n$ containing the domain of $\varphi$. The proximal Newton-type method is able to address \eqref{composite} with convex $\varphi$ and convex or weakly convex $\psi$ (see \cite{bertsekas82,Lee14,Yue19,Mordu23,liu22}). Another popular second-order method for solving \eqref{composite} is to minimize the forward-backward envelop (FBE) of $\Psi$, see \cite{stella17, Themelis18, Themelis19, themelis21}. In particular,
for those $\Psi$ with the proximal mapping of $\varphi$ being available, Themelis et al. \cite{Themelis18} proposed an algorithm called ZeroFPR, based on the quasi-Newton method, for minimizing the FBE of $\Psi$. They achieved the global convergence of the iterate sequence by means of the KL property of the FBE and its local superlinear rate under the Dennis-Mor\'{e} condition and the strong local minimality property of the limit point. An algorithm similar to ZeroFPR but minimizing the Bregman FBE of $\Psi$ was proposed in \cite{themelis21}, which achieves a superlinear convergence rate without requiring the strong local minimality of the limit point.
For the case that $\psi$ is smooth and $\varphi$ admits a computable proximal mapping, Bareilles et al. \cite{bareilles22} proposed an algorithm, alternating between a PG step and a Riemannian Newton method, which was proved to have a quadratic convergence rate under a positive definiteness assumption on the Riemannian Hessian at the limit point.

\subsection{Main contributions}
In this paper, we aim to design a hybrid of PG and inexact projected regularized Newton methods (PGiPN) to solve the structured $\ell_0$-norms regularization problem \eqref{model}. Let $x^k\in\Omega$ be the current iterate. Our method first runs a PG step with line search at $x^k$ to produce $\overline{x}^k$ with 
 \begin{equation}\label{def-muk}
 \overline{x}^k\in {\rm prox}_{\overline{\mu}_k^{-1}g}(x^k - \overline{\mu}_k^{-1}\nabla f(x^k)),
 \end{equation}
 where ${\rm prox}_{\overline{\mu}_k^{-1}g}(x)$ is the proximal mapping of $g$ defined by \eqref{pg}, $\overline{\mu}_k>0$ is a constant such that $F$ gains a sufficient decrease from $x^k$ to $\overline{x}^k$, and then judges whether the iterate enters Newton step or not in terms of some switch condition, which takes the following forms of structured stable supports:
\begin{equation}\label{switch-condition}
 {\rm supp}(x^k) = {\rm supp}(\overline{x}^k)\  \ {\rm and} \ \ {\rm supp}(Bx^k) = {\rm supp}(B\overline{x}^k).
 \end{equation}
 If this switch condition does not hold, we set $x^{k+1} = \overline{x}^k$ and return to the PG step. 
 Otherwise, due to the nature of $\ell_0$-norm, the restriction of $\lambda_1 \|Bx\|_0 + \lambda_2 \|x\|_0$ on the supports ${\rm supp}(Bx^k)$ and ${\rm supp}(x^k)$, i.e., $\lambda_1 \|(Bx)_{{\rm supp}(Bx^k)}\|_0 + \lambda_2 \|x_{{\rm supp}(x^k)}\|_0$, is a constant near $x^k$ and does not provide any useful information at all and thus, unlike dealing with the $\ell_q$ regularization problem in \cite{wu22}, we introduce the following multifunction $\Pi:\mathbb{R}^n \rightrightarrows \mathbb{R}^n$:
 \begin{align}\label{Pi-map}
	\Pi(z)&:=\left\{ x \in \Omega \ \lvert \ {\rm supp}(x) \subset {\rm supp}(z), \ {\rm supp}(Bx) \subset {\rm supp}(Bz) \right\} \nonumber\\
	&= \left\{ x\in\Omega \ \lvert \ x_{[{\rm supp}(z)]^c} = 0, \ (Bx)_{[{\rm supp}(Bz)]^c} = 0 \right\},
\end{align}
and consider the associated subproblem
 \begin{equation}\label{formulated-prob}
  \min_{x\in\mathbb{R}^n}\,f(x) + \delta_{\Pi_k}(x)\ \ {\rm with}\ \ \Pi_k = \Pi(x^k).
 \end{equation}
 It is noted that the set $\Pi(x^k)$ contains all the points whose supports are a subset of the support of $x^k$ as well as the supports of their linear transformation are a subset of the support of the linear transformation of $x^k$. It is worth pointing out that the multifunction $\Pi$ is not closed but closed-valued. 

We will show that a stationary point of \eqref{formulated-prob} is one for problem \eqref{model}.
Thus, instead of a subspace regularized Newton step in \cite{wu22}, following the projected Newton method in \cite{bertsekas82} and the proximal Newton method in \cite{Lee14,Yue19,Mordu23} and \cite{liu22}, our projected regularized Newton step minimizes the following second-order approximation of \eqref{formulated-prob} on $\Pi_k$:
 \begin{equation}\label{subp}
  \mathop{\arg\min}_{x\in\mathbb{R}^n}
  \Big\{\Theta_k(x)\!:= f(x^k) + \langle\nabla\!f(x^k),x-\!x^k\rangle
  +\frac{1}{2}\langle x\!-x^k, G_k(x\!-\!x^k)\rangle+\delta_{\Pi_k}(x)\Big\},
 \end{equation}
 where $G_k$ is an approximation to the Hessian $\nabla^2\!f(x^k)$, satisfying the following positive definiteness condition:
 \begin{equation}\label{con-Gk}
     G_k\succeq b_1\|\overline{\mu}_k(x^k\!-\!\overline{x}^k)\|^{\sigma} I,
 \end{equation}
 where $b_1 >0$, $\sigma \in (0,\frac{1}{2})$ and $\overline{\mu}_k$ is the one in \eqref{def-muk}.
 To cater for the practical computation, our Newton step seeks an inexact solution $y^k$ of \eqref{subp} satisfying
 \begin{numcases}{}\label{inexact-cond1}
 \Theta_k(y) - \Theta_k(x^k) \leq 0, \\ 
  \label{inexact-cond2}
  {\rm dist}(0, \partial \Theta_k(y)) 
  \le\frac{\min \{\overline{\mu}_k^{-1}, 1\}}{2}\min \left\{\|\overline{\mu}_k(x^k\!-\! \overline{x}^k)\|,\|\overline{\mu}_k(x^k\!-\overline{x}^k)\|^{1+\varsigma}\right\}
 \end{numcases}
 with $\varsigma\in (\sigma, 1]$. Setting the direction $d^k\!:=y^k-x^k$, a step size $\alpha_k \in (0,1]$ is found in the direction $d^k$ via backtrackings, and set $x^{k+1}:= x^k + \alpha_kd^k$. To ensure the global convergence, the next iterate still returns to the PG step. The details of the algorithm are given in Section \ref{sec3}.\\

The main contributions of the paper are as follows:

$\bullet$ Based on dynamic programming principle, we develop a polynomial-time ($O(n^{3+o(1)})$) algorithm for seeking a point $\overline{x}_k$ in the proximal mapping \eqref{def-muk} of $g$ when $B=\widehat{B}$.  This generalizes the corresponding result in \cite{jewell20} for finding $\overline{x}^k$ in \eqref{def-muk} from $g(\cdot) = \lambda_1\|B\cdot\|_0$ to $g(\cdot) = \lambda_1\|B\cdot\|_0 +\lambda_2 \|\cdot\|_0 + \delta_{\Omega}(\cdot)$. This also provides a PG algorithm for solving \eqref{model}. We establish a uniform lower bound on ${\rm prox}_{\mu^{-1} g}(x)$ for $x$ on a compact set and $\mu$ on a closed interval. This generalizes the corresponding results in \cite{Lu14B} for $\ell_0$-norm and in \cite{wu22} for $\ell_q$-norm with $0\!<\! q\! < \!1$, respectively. 

$\bullet$ We design a hybrid algorithm (PGiPN) of PG and inexact projected regularized Newton method to solve the structured $\ell_0$-norms regularization problem \eqref{model}, which includes the fused $\ell_0$-norms regularization problem with a box constraint as a special case. We obtain the global convergence of the algorithm by showing that the structured stable supports \eqref{switch-condition} hold when the iteration number is sufficiently large. Moreover, we establish a superlinear convergence rate under a H\"{o}lderian error bound on a second-order stationary point set, without requiring the local optimality of the limit point.

$\bullet$ The numerical experiments show that our PGiPN is more effective than some existing algorithms in the literature in terms of solution quality and efficiency.\\

The rest of the paper is organized as follows. In Section \ref{sec2} we give some preliminaries and characterizations of the optimality condition of model \eqref{model}. In Section \ref{sec3}, we give some results related to $g$, including a lower bound of proximal mapping of $g$, and an algorithm for finding a point in the proximal mapping of $\lambda_1\|\widehat{B}x\|_0 + \lambda_2\|x\|_0+\delta_{\Omega}(x)$. In Section \ref{sec4}, we introduce our algorithm and show that it is well defined. In Section \ref{sec5} we present the convergence analysis of our algorithm. The implementation scheme of our algorithm and the numerical experiments are presented in Section \ref{sec7}. 

\section{Preliminaries}\label{sec2}

\subsection{Notations}
For any $x\in\mathbb{R}^n$ and $\epsilon >0$, $\mathbb{B}(x, \epsilon):=\{z\ \lvert \ \|z-x\|\leq \epsilon\}$ denotes the ball centered at $x$ with radius $\epsilon$. Let ${\bf B}:=\mathbb{B}(0,1)$. For a closed and convex set $\Xi\subset \mathbb{R}^n$, ${\rm proj}_{\Xi}(z)$ denotes the projection of a point $z\in\mathbb{R}^n$ onto $\Xi$. We denote by $\mathcal{N}_{\Xi}(x)$ and $\mathcal{T}_{\Xi}(x)$ the normal cone and tangent cone of $\Xi$ at $x$, respectively. For a closed set $\Xi'\subset \mathbb{R}^n$, ${\rm dist}(z, \Xi') := \min_{x\in\Xi'}\|x-z\|$. For $t\in\mathbb{R}$, $t_+:= \max\{t, 0\}$. Fix any two nonnegative integers $j<k$, define $[j\!:\!k]:=\{j, j\!+\!1,...,k\}$ and $[k]:= [1\!:\!k]$. For an index set $T \subset [n]$, write $T^c := [n]\backslash T$ and $\lvert T\rvert$ is the number of the elements of $T$. 
Given any $x\in\mathbb{R}^n$, ${\rm supp}(x) : = \{ i \in [n] \ \lvert \ x_i \neq 0 \},$ $\lvert x\rvert_{\min} := \min_{i\in{\rm supp}(x)} \lvert x_i\rvert$ and $x_T\in\mathbb{R}^{\lvert T\rvert}$ is the vector consisting of those $x_j$'s with $j\in T$, and $x_{j:k}:= x_{[j:k]}$. {\bf 1} and $I$ are the vector of ones and the identity matrix, respectively, whose dimension is adaptive to the context.
Given a real symmetric matrix $H$, $\lambda_{\min}(H)$ denotes the smallest eigenvalue of $H$, and $\|H\|_2$ is the spectral norm of $H$.
For a matrix $A \in \mathbb{R}^{m \times n}$ and $S \subset [m]$, $A_{S\cdot}$(resp. $A_{\cdot T}$) denotes the matrix consisting of the rows (resp. the columns) of $A$ whose indices correspond to $S$(resp. $T$). We write the range of $A$ by ${\rm Range}(A) = \{ Ax \ \lvert \ x \in \mathbb{R}^n\}$ and the null space of $A$ by ${\rm Null}(A) = \{ y \in \mathbb{R}^n \ \lvert \ Ay = 0\}$. For another matrix $C\in\mathbb{R}^{m\times p}$, $[A\ C] \in \mathbb{R}^{m\times (n+p)}$ is defined as a matrix composed of two matrices, $A$ and $C$, placed side by side. For any $D\in \mathbb{R}^{p\times n}$, $[A; D]:=[A^{\top} \ D^{\top}]^{\top}$. For a proper lower semicontinuous function $h:\mathbb{R}^n\rightarrow \overline{\mathbb{R}}:=\mathbb{R} \cup {\pm \infty}$, its domain is denoted as ${\rm dom}h\!:=\{x\in\mathbb{R}^n\ \lvert\ h(x)<\infty\}$, and the proximal mapping of $h$ is defined as
\begin{equation}\label{pg}
{\rm prox}_{\mu h} (z) : = \mathop{\arg\min}_{x\in\mathbb{R}^n} \frac{1}{2\mu} \|x-z\|^2 + h(x),
\end{equation}
with $\mu > 0$ and $z\in\mathbb{R}^n$.

\subsection{Stationary conditions}\label{sec2.1}

Since problem \eqref{model} involves a box constraint and the structured $\ell_0$-norms function is lower semicontinuous, the set of global optimal solutions of model \eqref{model} is nonempty and compact. Moreover, by the continuity of $\nabla^2 \!f$ and the compactness of $\Omega$, we have $\nabla\!f$ is Lipschitz continuous on $\Omega$, i.e., there exists $L_1>0$ such that
\begin{equation}\label{fact1}
	\|\nabla f(x) - \nabla f(y)\|\leq L_1 \|x-y\| \ \ {\rm for\  all} \  x,y\in\Omega.
\end{equation}

For a point $\overline{x}\in{\rm dom}h$, we denote the basic (or limiting) subdifferential of $h$ at $\overline{x}$ by $\partial h(\overline{x})$  \cite[Definition 8.3]{RW09}. Next, we introduce two classes of stationary points for the general composite problem \eqref{composite}, which includes \eqref{model} as a special case. 
\begin{definition}\label{def1-Spoint}
	A vector $x\in\mathbb{R}^n$ is called a stationary point of problem \eqref{composite} if $0\in\partial\Psi(x)$. 
	A vector $x\in\mathbb{R}^n$ is called an $L$-stationary point of problem \eqref{composite}
	if there exists a constant $\mu>0$ such that
	$x\in{\rm prox}_{\mu^{-1}\varphi}(x-\!\mu^{-1}\nabla \psi(x))$. 
\end{definition}

Recall that $\Psi= \psi + \varphi$, where $\psi$ is twice continuously differentiable and $\varphi$ is proper and lower semicontinuous. If in addition $\varphi$ is assumed to be convex, 
then 
$$0\in \partial \Psi(x) \Leftrightarrow 0\in \mu(x-(x-\mu^{-1}\nabla \!\psi(x)))+\partial \varphi(x) \Leftrightarrow x={\rm prox}_{\mu^{-1}\varphi}(x\!-\mu^{-1}\nabla \psi(x)),$$
which means that for problem \eqref{composite} the $L$-stationarity of a point $x$ is equivalent to its stationarity. To extend this equivalence to a broader class of functions, we recall the definition of prox-regularity, which acts as a surrogate of convexity.
\begin{definition}\label{def-proxregular}
\cite[Definition 13.27]{RW09} A function $h\!:\mathbb{R}^n \rightarrow \overline{\mathbb{R}}$ is prox-regular at a point $\overline{x}\in{\rm dom}h$ for $\overline{v} \in \partial h(\overline{x})$ if $h$ is locally lower semicontinuous at $\overline{x}$, and there exist $r \geq 0$ and $\varepsilon > 0$ such that $h(x') \geq h(x) + v^{\top}(x' -x) - \frac{r}{2} \|x'-x\|^2$ for all $\|x'-\overline{x}\|\leq \varepsilon$, whenever $v\in\partial h(x)$, $\|v-\overline{v}\|<\varepsilon$, $\|x-\overline{x}\|<\varepsilon$ and $h(x)<h(\overline{x}) + \varepsilon$. If $h$ is prox-regular at $\overline{x}$ for all $\overline{v} \in \partial h(\overline{x})$, we say that $h$ is prox-regular at $\overline{x}$. 
\end{definition}

The following proposition reveals that under the assumption of the prox-regularity of $\varphi$, these two classes of stationary points for $\Psi$ coincide. The proof is similar to that in \cite[Remark 2.5]{wu22}, and the details are omitted here.
\begin{proposition}\label{prop-opt}
	If $\overline{x}$ is an $L$-stationary point of problem \eqref{composite}, then $0\in\partial\Psi(\overline{x})$. If $\varphi$ is prox-regular at $\overline{x}$ for $-\nabla \psi(\overline{x})$ and prox-bounded\footnote{For the definition of prox-boundedness, see \cite[Definitions 1.23]{RW09}.}, the converse is also true.
\end{proposition}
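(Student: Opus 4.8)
The plan is to establish the two implications separately; only the converse will call on prox-regularity and prox-boundedness. First I would treat the forward direction. Suppose $\overline{x}$ is $L$-stationary, so there is $\mu>0$ with $\overline{x}\in{\rm prox}_{\mu^{-1}\varphi}(z)$, where $z:=\overline{x}-\mu^{-1}\nabla\psi(\overline{x})$. By the definition \eqref{pg}, $\overline{x}$ minimizes the map $x\mapsto \frac{\mu}{2}\|x-z\|^2+\varphi(x)$. Applying Fermat's rule \cite[Theorem 10.1]{RW09} together with the subdifferential sum rule for a smooth-plus-lsc function \cite[Exercise 8.8]{RW09}, I obtain $0\in\mu(\overline{x}-z)+\partial\varphi(\overline{x})$. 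Since $\mu(\overline{x}-z)=\nabla\psi(\overline{x})$, this reads $0\in\nabla\psi(\overline{x})+\partial\varphi(\overline{x})$, and invoking the same sum rule once more identifies the right-hand side with $\partial\Psi(\overline{x})$. Hence $0\in\partial\Psi(\overline{x})$, with no extra hypothesis needed.

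For the converse, I would start from $0\in\partial\Psi(\overline{x})$, which by the sum rule is equivalent to $\overline{v}:=-\nabla\psi(\overline{x})\in\partial\varphi(\overline{x})$. The goal is to produce some $\mu>0$ realizing $\overline{x}\in{\rm prox}_{\mu^{-1}\varphi}(\overline{x}-\mu^{-1}\nabla\psi(\overline{x}))$. Writing $\lambda:=\mu^{-1}$ and $w:=\overline{x}+\lambda\overline{v}=\overline{x}-\lambda\nabla\psi(\overline{x})$, it suffices to show that $\overline{x}$ is a global minimizer of $\theta_\lambda(x):=\varphi(x)+\frac{1}{2\lambda}\|x-w\|^2$ for some small $\lambda>0$.

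The core computation uses the prox-regularity inequality of Definition \ref{def-proxregular} with the pair $(\overline{x},\overline{v})$: there exist $r\ge 0$ and $\varepsilon>0$ such that $\varphi(x)\ge \varphi(\overline{x})+\overline{v}^\top(x-\overline{x})-\frac{r}{2}\|x-\overline{x}\|^2$ for all $\|x-\overline{x}\|\le\varepsilon$. Expanding the quadratic term of $\theta_\lambda$ about $\overline{x}$ and using $w-\overline{x}=\lambda\overline{v}$ cancels the linear part exactly, yielding, for all $\|x-\overline{x}\|\le\varepsilon$,
\[
\theta_\lambda(x)-\theta_\lambda(\overline{x})\ \ge\ \Big(\tfrac{1}{2\lambda}-\tfrac{r}{2}\Big)\|x-\overline{x}\|^2 .
\]
Choosing $\lambda<1/r$ (any $\lambda>0$ when $r=0$) makes the coefficient positive, so $\overline{x}$ is a strict local minimizer of $\theta_\lambda$ on $\mathbb{B}(\overline{x},\varepsilon)$.

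The remaining obstacle, and the reason prox-boundedness enters, is to upgrade this local minimality to the global minimization demanded by the definition of ${\rm prox}_{\mu^{-1}\varphi}$. Here I would invoke the structural result of Rockafellar and Wets relating prox-regularity to single-valuedness of the proximal mapping (e.g.\ \cite[Proposition 13.37]{RW09}): under prox-regularity at $\overline{x}$ for $\overline{v}$ together with prox-boundedness of $\varphi$, for all sufficiently small $\lambda>0$ the mapping ${\rm prox}_{\lambda\varphi}$ is single-valued and Lipschitz on a neighborhood of $w=\overline{x}+\lambda\overline{v}$, with ${\rm prox}_{\lambda\varphi}(\overline{x}+\lambda\overline{v})=\overline{x}$. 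Setting $\mu=1/\lambda$ then gives $\overline{x}={\rm prox}_{\mu^{-1}\varphi}(\overline{x}-\mu^{-1}\nabla\psi(\overline{x}))$, i.e.\ $\overline{x}$ is $L$-stationary. The hard part will be exactly this localization: the prox-regularity inequality controls $\theta_\lambda$ only near $\overline{x}$, whereas prox-boundedness is what both guarantees the infimum is attained and rules out a competing global minimizer far from $\overline{x}$.
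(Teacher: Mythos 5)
Your proposal is correct, and it is the standard argument for this equivalence: the forward direction via Fermat's rule and the sum rule, and the converse by combining the prox-regularity inequality at $(\overline{x},\overline{v})$ with prox-boundedness (via \cite[Proposition 13.37]{RW09}) to pass from local to global minimality of $x\mapsto\varphi(x)+\frac{\mu}{2}\|x-(\overline{x}-\mu^{-1}\nabla\psi(\overline{x}))\|^2$. The paper itself omits the proof (deferring to \cite[Remark 2.5]{wu22}), and your write-up follows essentially that same route, correctly identifying that prox-boundedness is what rules out a distant competing minimizer and guarantees attainment.
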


Next we focus on the stationary conditions of problem \eqref{model}. Recall that the multifunction $\Pi:\mathbb{R}^n \rightrightarrows \mathbb{R}^n$ defined in \eqref{Pi-map} is closed-valued. The following lemma characterizes the subdifferential of function $F$.
\begin{lemma}\label{subdiff-hpconvex}
	Fix any $z\in\Omega$. The following statements are true.
	\begin{description}
		\item[(i)] $\partial F(z) = \nabla\!f(z) + \partial g(z) = \nabla\!f(z) + \mathcal{N}_{ \Pi(z)}(z)$.  
		
		\item[(ii)]   $0\in \nabla\!f(x) + \mathcal{N}_{\Pi(z)}(x)$ implies that $0\in\partial F(x)$.
	\end{description}     
\end{lemma}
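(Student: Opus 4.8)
The plan is to reduce the computation of $\partial F(z)$ to that of $\partial g(z)$, and then to identify $\partial g(z)$ with the normal cone to the polyhedral set $\Pi(z)$. Since $f$ is $C^2$ and $g$ is proper lower semicontinuous, the exact sum rule for a smooth plus a nonsmooth function (e.g. \cite[Exercise 8.8]{RW09}) gives $\partial F(z) = \nabla f(z) + \partial g(z)$ at once, so the entire content of (i) is the identity $\partial g(z) = \mathcal{N}_{\Pi(z)}(z)$. I would base everything on two elementary structural facts about $g$ near $z$. Writing $S = {\rm supp}(z)$ and $T = {\rm supp}(Bz)$, continuity shows that on a small ball $\mathbb{B}(z,\epsilon)$ every coordinate indexed by $S$ and every row of $Bx$ indexed by $T$ stays nonzero; hence $\|x\|_0 \ge |S|$ and $\|Bx\|_0 \ge |T|$ there, with equality exactly on $\Pi(z)$. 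Consequently $g(x) \ge g(z)$ throughout $\mathbb{B}(z,\epsilon)$, with $g \equiv g(z)$ on $\Pi(z)\cap\mathbb{B}(z,\epsilon)$ and $g(x) \ge g(z) + \min\{\lambda_1,\lambda_2\}$ for $x \in (\Omega\setminus\Pi(z))\cap\mathbb{B}(z,\epsilon)$. Moreover $\Pi(z) = \{x: x_{S^c}=0,\ (Bx)_{T^c}=0\}\cap\Omega$ is the intersection of a subspace and a box, hence a nonempty convex polyhedron, so $\delta_{\Pi(z)}$ is convex and its regular and limiting normal cones coincide.

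For the regular (Fr\'echet) subdifferential $\widehat\partial g(z)$ I would exploit the sandwich $g(x) \le g(z) + \delta_{\Pi(z)}(x)$ on $\mathbb{B}(z,\epsilon)$, with equality at $z$: this is immediate since $g$ is finite off $\Pi(z)$ whereas the right-hand side is $+\infty$ there, while both equal $g(z)$ on $\Pi(z)$. A routine comparison yields $\widehat\partial g(z) \subseteq \widehat\partial\delta_{\Pi(z)}(z) = \widehat{\mathcal N}_{\Pi(z)}(z)$. For the reverse inclusion I would take $v \in \widehat{\mathcal N}_{\Pi(z)}(z)$ and verify the Fr\'echet lower bound for $g$ directly by splitting $x$ into the three regimes above: on $\Pi(z)$ it follows from $v$ being a regular normal, on $\Omega\setminus\Pi(z)$ the fixed jump $\min\{\lambda_1,\lambda_2\}>0$ dominates the vanishing linear term, and off $\Omega$ the value is $+\infty$. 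This gives $\widehat\partial g(z) = \widehat{\mathcal N}_{\Pi(z)}(z) = \mathcal{N}_{\Pi(z)}(z)$, the last equality by convexity of $\Pi(z)$.

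The main obstacle is upgrading this Fr\'echet identity to the limiting subdifferential, since the multifunction $\Pi$ is not closed and nearby points may carry different supports. Here I would use the value-convergence built into the definition of $\partial g(z)$: any generating sequence $x^k \to z$ with $g(x^k)\to g(z)$ and $v^k\in\widehat\partial g(x^k)$, $v^k\to v$, must eventually satisfy $x^k\in\Pi(z)$, for otherwise $g(x^k)\ge g(z)+\min\{\lambda_1,\lambda_2\}$. For such $x^k$ close to $z$ the supports cannot shrink, so in fact ${\rm supp}(x^k)=S$ and ${\rm supp}(Bx^k)=T$, whence $\Pi(x^k)=\Pi(z)$ and the Fr\'echet computation applies verbatim at $x^k$, giving $\widehat\partial g(x^k) = \mathcal{N}_{\Pi(z)}(x^k)$. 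Passing to the limit and using the outer semicontinuity of the normal-cone mapping of the \emph{fixed} convex set $\Pi(z)$ yields $\partial g(z) \subseteq \mathcal{N}_{\Pi(z)}(z)$; combined with $\mathcal{N}_{\Pi(z)}(z) = \widehat\partial g(z) \subseteq \partial g(z)$ this proves (i).

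Finally, (ii) follows quickly from (i). If $0\in\nabla f(x)+\mathcal{N}_{\Pi(z)}(x)$ then necessarily $x\in\Pi(z)$, which forces ${\rm supp}(x)\subseteq{\rm supp}(z)$ and ${\rm supp}(Bx)\subseteq{\rm supp}(Bz)$, and hence $\Pi(x)\subseteq\Pi(z)$. Since both sets are convex and contain $x$, the inclusion reverses on normal cones, i.e. $\mathcal{N}_{\Pi(z)}(x)\subseteq\mathcal{N}_{\Pi(x)}(x)$, so $-\nabla f(x)\in\mathcal{N}_{\Pi(x)}(x)$; applying part (i) at the point $x$ then gives $0\in\nabla f(x)+\mathcal{N}_{\Pi(x)}(x)=\partial F(x)$.
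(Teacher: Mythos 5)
Your proposal is correct, and it is worth noting where it diverges from the paper. For the first equality of (i) you invoke the same smooth-plus-nonsmooth sum rule (\cite[Exercise 8.8]{RW09}), and your proof of (ii) is essentially identical to the paper's: $x\in\Pi(z)$ forces $\Pi(x)\subset\Pi(z)$, hence $\mathcal{N}_{\Pi(z)}(x)\subset\mathcal{N}_{\Pi(x)}(x)$, and part (i) applied at $x$ finishes. The genuine difference is the second equality of (i): the paper simply cites \cite[Lemma 2.2(i)]{pan22} for $\partial g(z)=\mathcal{N}_{\Pi(z)}(z)$, whereas you prove it from scratch. Your argument is sound at every stage: the local sandwich $g\le g(z)+\delta_{\Pi(z)}$ with equality on $\Pi(z)\cap\mathbb{B}(z,\epsilon)$ gives $\widehat\partial g(z)\subset\widehat{\mathcal N}_{\Pi(z)}(z)$; the uniform jump $\min\{\lambda_1,\lambda_2\}$ off $\Pi(z)$ dominates the linear term and gives the reverse inclusion; and the passage to the limiting subdifferential correctly exploits the value convergence $g(x^k)\to g(z)$ to force $x^k\in\Pi(z)$ with stabilized supports, so that $\Pi(x^k)=\Pi(z)$ and outer semicontinuity of the normal cone of the \emph{fixed} convex polyhedron $\Pi(z)$ applies. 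This is precisely the argument one would expect the cited lemma to encapsulate, so what your route buys is self-containedness at the cost of length; the paper's route buys brevity at the cost of an external dependency. No gaps.
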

\begin{proof}
	The first equality of part (i) follows by \cite[Exercise 8.8]{RW09}, and the second one uses \cite[Lemma 2.2 (i)]{pan22}. Next we consider part (ii). Let $x\in \Pi(z)$. From the definition of $\Pi(\cdot)$, we have $\Pi(x) \subset \Pi(z)$, which along with $x \in \Pi(x)$ implies that $\mathcal{N}_{\Pi(z)}(x) \subset\mathcal{N}_{\Pi(x)}(x)$. Combining part (i), we obtain the desired result. 
\end{proof}

\begin{remark}
 Lemma \ref{subdiff-hpconvex} (ii) provides a way to seek a stationary point of $F$. Indeed, for any given $z\in\mathbb{R}^n$, if $x$ is a stationary point of problem $\mathop{\arg\min}\{ f(y) \ \lvert \ y\in\Pi(z)\}$, i.e., $0\in \nabla f(x) + \mathcal{N}_{\Pi(z)}(x)$, then by Lemma \ref{subdiff-hpconvex} (ii) it necessarily satisfies $0\in\partial F(x)$. This technique will be utilized in the design of our algorithm. In particular, when obtaining a good estimate of the stationary point, say $x^k$, we use a Newton step to minimize $f$ over the polyhedral set $\Pi(x^k)$, so as to enhance the speed of the algorithm.
\end{remark}
\subsection{Kurdyka-{\L}ojasiewicz property}\label{sec2.3}

In this subsection, we introduce the concept of Kurdyka-{\L}ojasiewicz (KL) property.
The KL property of a function plays an important role in the convergence analysis of first-order algorithms for nonconvex and nonsmooth optimization problems (see, e.g., \cite{Attouch10,Attouch13}). In this work, we will use it to establish the global convergence property of our algorithm.
\begin{definition}\label{KL-Def}
For any $\eta>0$, we denote by $\Upsilon_{\!\eta}$ the set consisting of all continuous concave $\varphi\!:[0,\eta)\to\mathbb{R}_{+}$ that are continuously
differentiable on $(0,\eta)$ with $\varphi(0)=0$ and $\varphi'(s)>0$ for all $s\in(0,\eta)$.
	A proper function $h\!:\mathbb{R}^n\!\to\overline{\mathbb{R}}$ is said to have the KL property at $\overline{x}\in{\rm dom}\,\partial h$
	if there exist $\eta\in(0,\infty]$, a neighborhood $\mathcal{U}$ of $\overline{x}$ and a function $\varphi\in\Upsilon_{\!\eta}$ such that for all $x\in\mathcal{U}\cap\big[h(\overline{x})<h<h(\overline{x})+\eta\big]$, $\varphi'(h(x)-h(\overline{x})){\rm dist}(0,\partial h(x))\ge 1.$ If $h$ has the KL property at each point of ${\rm dom} \partial h$, then $h$ is called a KL function.
\end{definition}
\section{Prox-regularity and proximal mapping of $g$} \label{sec3}

\subsection{Prox-regularity of $g$}\label{sec3.1}
In this subsection, we aim at proving the prox-regularity of $g$, which together with Proposition \ref{prop-opt} and the prox-boundedness of $g$ indicates that the set of stationary points of problem \eqref{model} coincides with that of its $L$-stationary points.

We remark here that the prox-regularity of $g$ cannot be obtained from the existing calculus of prox-regularity. In fact, it was revealed in \cite[Theorem 3.2]{poliquin10} that, for proper $f_i, \ i=1,2$ with $f_i$ being prox-regular at $\overline{x}$ for $v_i \in \partial f_i(\overline{x})$ and let $v:=v_1+v_2$, and $f_0:= f_1+f_2$, a sufficient condition such that $f_0$ is prox-regular at $\overline{x}$ for $v$ is 
\begin{equation}\label{constraint-qualification}
     w_1+w_2 = 0 \ {\rm with} \ w_i\in \partial^{\infty} f_i(\overline{x}) \Longrightarrow w_i = 0, \ i=1,2,
\end{equation}
where $\partial^{\infty}$ denotes the horizon subdifferential \cite[Definition 8.3]{RW09}.
We give a counter example to illustrate that the above constraint qualification does not hold for $f_i:\mathbb{R}^4\rightarrow \mathbb{R}$ with $ f_1= \|\widehat{B} \cdot \|_0$ and $f_2 = \| \cdot \|_0$. Let $\overline{x} = (0,0,0,1)^{\top}$. Then,  
$$ \partial^{\infty} f_1(\overline{x}) = \partial f_1(\overline{x}) = {\rm Range}((\widehat{B}_{[2]\cdot})^{\top}), \ \partial^{\infty} f_2(\overline{x}) = \partial f_2(\overline{x}) = {\rm Range}((I_{[3]\cdot})^{\top}).$$
By the expressions of $\partial^{\infty} f_1(\overline{x}) $ and $\partial^{\infty} f_2(\overline{x})$, it is immediate to check that the constraint qualification in \eqref{constraint-qualification} does not hold. Next, we give our proof toward the prox-regularity of $g$.

\begin{lemma}\label{lemma-pregular}
The function $g$ is prox-regular on its domain $\Omega$. Consequently, the set of stationary points of model \eqref{model} coincides with its set of $L$-stationary points.
\end{lemma}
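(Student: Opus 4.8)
The plan is to verify Definition \ref{def-proxregular} directly at an arbitrary $\overline{x}\in\Omega$, since, as already noted, the sum rule of \cite{poliquin10} is unavailable because \eqref{constraint-qualification} fails. The guiding observation is that the value restriction $g(x)<g(\overline{x})+\varepsilon$ in the definition, combined with the lower semicontinuity and integer-valuedness of $\|B\cdot\|_0$ and $\|\cdot\|_0$, \emph{freezes both supports} once $\varepsilon<\min\{\lambda_1,\lambda_2\}$: the admissible base points are then confined to the fixed polyhedral (hence convex) set $\Pi(\overline{x})$, on which $g$ is constant, while every feasible point escaping $\Pi(\overline{x})$ pays a jump of at least $\min\{\lambda_1,\lambda_2\}$.

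First I would fix $\overline{x}\in\Omega$, put $S:={\rm supp}(\overline{x})$, $T:={\rm supp}(B\overline{x})$, and take any $\overline{v}\in\partial g(\overline{x})=\mathcal{N}_{\Pi(\overline{x})}(\overline{x})$ via Lemma \ref{subdiff-hpconvex}(i). Using $\lvert\overline{x}\rvert_{\min}>0$ on $S$ and the analogous minimum of $\lvert B\overline{x}\rvert$ on $T$, I would pick $\varepsilon\in(0,\min\{\lambda_1,\lambda_2\})$ small enough that $\|y-\overline{x}\|\le\varepsilon$ already forces ${\rm supp}(y)\supseteq S$ and ${\rm supp}(By)\supseteq T$, hence $\|y\|_0\ge\lvert S\rvert$ and $\|By\|_0\ge\lvert T\rvert$. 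Then for any admissible base point $x$ (with $x\in\Omega$, $\|x-\overline{x}\|<\varepsilon$, $g(x)<g(\overline{x})+\varepsilon$) the inequality $\lambda_1(\|Bx\|_0-\lvert T\rvert)+\lambda_2(\|x\|_0-\lvert S\rvert)<\varepsilon<\min\{\lambda_1,\lambda_2\}$ with nonnegative summands forces ${\rm supp}(x)=S$ and ${\rm supp}(Bx)=T$; consequently $g(x)=g(\overline{x})$, $\Pi(x)=\Pi(\overline{x})$, and $v\in\partial g(x)=\mathcal{N}_{\Pi(\overline{x})}(x)$.

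The inequality $g(x')\ge g(x)+v^{\top}(x'-x)-\frac{r}{2}\|x'-x\|^2$ over $\|x'-\overline{x}\|\le\varepsilon$ would then be checked by cases, which I expect to be the main (if modest) obstacle. If $x'\notin\Omega$ it is trivial; if $x'\in\Pi(\overline{x})$ the ball condition again gives ${\rm supp}(x')=S$ and ${\rm supp}(Bx')=T$, so $g(x')=g(x)$, while $v\in\mathcal{N}_{\Pi(\overline{x})}(x)$ yields $v^{\top}(x'-x)\le0$, and $r=0$ works. The genuinely non-convex case is $x'\in\Omega\setminus\Pi(\overline{x})$, where a broken support forces the fixed jump $g(x')\ge g(\overline{x})+\min\{\lambda_1,\lambda_2\}$; here I would bound $v^{\top}(x'-x)\le(\|\overline{v}\|+\varepsilon)(2\varepsilon)$ and shrink $\varepsilon$ so that this is dominated by the jump, again with $r=0$. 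Note the delicacy: off $\Pi(\overline{x})$ the function $g$ is finite and strictly below the convex surrogate $g(\overline{x})+\delta_{\Pi(\overline{x})}$, so convexity alone cannot close this case and the discrete jump must be used explicitly. This would give prox-regularity of $g$ at every $\overline{x}\in\Omega$.

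For the consequence I would apply Proposition \ref{prop-opt}: $g$ is prox-bounded because $g\ge0$ with bounded domain $\Omega$, and at any stationary point $\overline{x}$ of \eqref{model} one has $-\nabla\!f(\overline{x})\in\partial g(\overline{x})$, a vector for which $g$ is prox-regular by the above; therefore the stationary and $L$-stationary point sets of \eqref{model} coincide.
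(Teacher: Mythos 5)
Your proposal is correct and follows essentially the same route as the paper's proof: shrink $\varepsilon$ below the quantization gap $\min\{\lambda_1,\lambda_2\}$ so the value condition freezes both supports of admissible base points, then split $x'$ into the cases infeasible / same supports / strictly larger supports, using the discrete jump to absorb the linear term in the last case with $r=0$. The only (immaterial) difference is that in the equal-support case the paper decomposes $v$ into normal-cone components of $\Pi^1(x)$, $\Pi^2(x)$ and $\Omega$ via \cite[Theorem 23.8]{convexanalysis}, whereas you invoke $v\in\mathcal{N}_{\Pi(\overline{x})}(x)$ and the convexity of $\Pi(\overline{x})$ directly, which is slightly more economical.
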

\begin{proof}
	Fix any $\overline{x}\in\Omega$ and pick any $\overline{v} \in \partial g(\overline{x})$. Let $\lambda := \min\{\lambda_1, \lambda_2\}$ and $C:=[B; I]$. Pick any $\varepsilon \in (0, \min\{\lambda,\|\overline{v}\|,\frac{\lambda}{5\|\overline{v}\|}\})$ such that for all $x\in \mathbb{B}(\overline{x}, \varepsilon)$, ${\rm supp}(Cx)\supset{\rm supp}(C\overline{x})$. Next we prove that 
    \begin{equation}\label{eq-proxregular}
        g(x') \geq g(x) + v^{\top}(x'-x), \ {\rm for\ all \ } \|x'-\overline{x}\|\leq \varepsilon,\ v\in\partial g(x),\ \|v-\overline{v}\|< \varepsilon \ {\rm and} \ x\in \Xi,
    \end{equation} 
    where $\Xi := \{x\ | \ \|x-\overline{x}\| < \varepsilon, \ g(x) <g(\overline{x})+\varepsilon\},$ which implies that $g$ is prox-regular at $\overline{x}$ for $\overline{v}$.

    We first claim that for each $x\in \Xi$, it holds that ${\rm supp}(Cx) = {\rm supp}(C\overline{x})$ and $x\in\Omega$. If fact, by the definition of $\varepsilon$, ${\rm supp}(Cx)\supset{\rm supp}(C\overline{x})$. If ${\rm supp}(Cx) \neq {\rm supp}(C\overline{x})$, we have $g(x) \geq g(\overline{x}) + \lambda > g(\overline{x})+\varepsilon$, which yields that $x\notin \Xi$. Therefore, ${\rm supp}(Cx)={\rm supp}(C\overline{x})$. The fact that $x\in \Xi$ implies $x\in\Omega$ is clear. Hence the claimed facts are true.
    
    Fix any $x\in\Xi$. Consider any $x'\in\mathbb{B}(\overline{x},\varepsilon)$. If $x' \notin \Omega$, since $g(x')=\infty$, it is immediate to see that \eqref{eq-proxregular} holds, so it suffices to consider $x' \in \mathbb{B}(\overline{x},\varepsilon) \cap \Omega$. Note that
	${\rm supp}(Cx')\supset{\rm supp}(C\overline{x})={\rm supp}(Cx)$. If ${\rm supp}(Cx') \neq {\rm supp}(Cx)$, then $ g(x')\geq  g(x)+\lambda$. For any $v\in \partial  g(x)$ with $v\in\mathbb{B}(\overline{v},\varepsilon)$, $\|v\| \leq \|\overline{v}\| + \varepsilon \leq 2\|\overline{v}\|$, which along with $\|x' - x\|\leq \|x'-\overline{x}\|+\|x- \overline{x}\| \leq 2\varepsilon$ implies that 
	$$  g(x') -  g(x) -  v^{\top} (x' -x) \geq \lambda - \|v\|\|x'-x\| \geq \lambda - 4\|\overline{v}\| \varepsilon >0.$$
    Equation \eqref{eq-proxregular} holds.
	Next we consider the case ${\rm supp}(Cx') = {\rm supp}(Cx)$. Define
   $$\Pi^1(x):= \big\{ z\in\mathbb{R}^n \ \lvert \ (Bz)_{[{\rm supp}(Bx)]^c} = 0\big\}, \ \Pi^2(x):= \big\{ z\in\mathbb{R}^n \ \lvert \ z_{[{\rm supp}(x)]^c} = 0\big\}.$$ 
 Clearly, $\Pi(x) = \Pi^1(x) \cap \Pi^2(x) \cap \Omega$ and $\Pi_1(x),\Pi_2(x)$ and $\Omega$ are all polyhedral sets.
By \cite[Theorem 23.8]{convexanalysis}, for any $v\in\mathcal{N}_{\Pi(x)}(x)=\partial g(x)$, there exist $v_1\in \mathcal{N}_{\Pi^1(x)}(x)$, $v_2\in \mathcal{N}_{\Pi^2(x)}(x)$ and $v_3 \in \mathcal{N}_{\Omega}(x)$ such that $v = v_1 + v_2 + v_3$. Then,
	\begin{align*}
		g(x') -  g(x) - & v^{\top} (x' -x) =   \lambda_1\|Bx'\|_0 -\lambda_1 \|Bx\|_0 - v_1^\top (x'-x) \\
		& +\lambda_2 \|x'\|_0 - \lambda_2 \|x\|_0 - v_2^\top (x'-x) - v_3^\top (x'-x) \geq 0,
	\end{align*}
 where the inequality follows from $\lambda_1\|Bx'\|_0 -\lambda_1 \|Bx\|_0=0,v_1^\top (x'-x)= 0$, $\lambda_2 \|x'\|_0 - \lambda_2 \|x\|_0 =0, v_2^\top (x'-x) =0$ and $v_3^\top (x'-x) \leq 0$. Equation \eqref{eq-proxregular} is true.
	Thus, by the arbitrariness of $\overline{x}\in \Omega$ and $\overline{v}\in\partial  g(\overline{x})$, we conclude that $ g$ is prox-regular on set $\Omega$. 
\end{proof}

\subsection{Lower bound of the proximal mapping of $g$}
Given $\lambda>0$ and $x\in\mathbb{R}^n$, for any $z \in {\rm prox}_{\lambda\|\cdot\|_0}(x)$, it holds that if $|z_i|>0$, then $|z_i| \geq \sqrt{2\lambda}$ \cite[Lemma 3.3]{Lu14B}. This indicates that $|z|_{\min}$ has a uniform lower bound on ${\rm prox}_{\lambda\|\cdot\|_0}(x)$.    Such a uniform lower bound is shown to hold for $\ell_q$-norm with $0\!<\! q\! < \!1$ and played a crucial role in the convergence analysis of the algorithms involving subspace Newton method (see \cite{wu22}). Next, we show that such a uniform lower bound exists for $g$.

\begin{lemma}\label{lemma-lb}
 For any given compact set $\Xi\subset\mathbb{R}^n$ and constants $0<\underline{\mu}<\overline{\mu}$, define 
 \[   \mathcal{Z}:={\textstyle\bigcup_{z\in\Xi,\mu\in[\underline{\mu},\overline{\mu}]}}\ {\rm prox}_{\mu^{-1}g}(z).
 \]
 Then, there exists $\nu>0$ (depending on $\Xi,\underline{\mu}$ and $\overline{\mu}$) such that  $\inf_{u\in\mathcal{Z}\backslash\{0\}}\,\lvert [B;I]u\rvert_{\min}\ge\nu$. 
\end{lemma}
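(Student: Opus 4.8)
The plan is to argue by contradiction, exploiting that only finitely many support patterns can arise, together with the compactness of $\Omega$, $\Xi$ and $[\underline\mu,\overline\mu]$. First I would fix the notation $C:=[B;I]$ and $\lambda:=\min\{\lambda_1,\lambda_2\}$, and recall that every $u\in{\rm prox}_{\mu^{-1}g}(z)$ is a \emph{global} minimizer of $\phi_{z,\mu}(x):=\frac{\mu}{2}\|x-z\|^2+g(x)$, hence lies in ${\rm dom}\,g=\Omega$, which is a compact box. For such a $u$ the nonzero entries of $Cu$ are exactly those indexed by ${\rm supp}(Bu)$ (the $B$-block) and ${\rm supp}(u)$ (the $I$-block), so that $\lvert Cu\rvert_{\min}=\min\{\min_{j\in{\rm supp}(Bu)}\lvert(Bu)_j\rvert,\ \min_{i\in{\rm supp}(u)}\lvert u_i\rvert\}$, and a uniform positive lower bound on this quantity is exactly what the lemma asserts.

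Supposing the bound fails, I would take a sequence $u^m\in\mathcal{Z}\setminus\{0\}$ with $\lvert Cu^m\rvert_{\min}\to 0$, where each $u^m$ globally minimizes $\phi_{z^m,\mu_m}$ for some $z^m\in\Xi$ and $\mu_m\in[\underline\mu,\overline\mu]$. Since the pair $({\rm supp}(u^m),{\rm supp}(Bu^m))$ ranges over the finite collection of subsets of $[n]$ paired with subsets of $[p]$, a pigeonhole argument lets me pass to a subsequence along which ${\rm supp}(u^m)=T$ and ${\rm supp}(Bu^m)=J$ are constant, with $T\neq\emptyset$ since $u^m\neq 0$. Along this subsequence $g(u^m)\equiv\lambda_1|J|+\lambda_2|T|=:c$, and by compactness of $\Omega$, $\Xi$ and $[\underline\mu,\overline\mu]$ I may further assume $u^m\to u^*\in\Omega$, $z^m\to z^*\in\Xi$ and $\mu_m\to\mu^*\in[\underline\mu,\overline\mu]$.

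The key step is that the limit $u^*$ loses combined support: continuity gives ${\rm supp}(u^*)\subseteq T$ and ${\rm supp}(Bu^*)\subseteq J$ (a coordinate nonzero at $u^*$ is nonzero for large $m$), while $\lvert Cu^m\rvert_{\min}\to 0$ forces some fixed index of the common support of $Cu^m$ to vanish in the limit, making at least one inclusion strict. Hence $g(u^*)\le\lambda_1|J|+\lambda_2|T|-\lambda=c-\lambda$. Feeding $u^*$ as a competitor into the global optimality of $u^m$, i.e.\ $\phi_{z^m,\mu_m}(u^m)\le\phi_{z^m,\mu_m}(u^*)$, and using $g(u^m)=c$ yields
\[
\frac{\mu_m}{2}\|u^m-z^m\|^2 \le \frac{\mu_m}{2}\|u^*-z^m\|^2 + g(u^*) - c \le \frac{\mu_m}{2}\|u^*-z^m\|^2 - \lambda .
\]
Letting $m\to\infty$, both quadratic terms converge to $\frac{\mu^*}{2}\|u^*-z^*\|^2$, so the inequality collapses to $0\le-\lambda<0$, a contradiction; taking $\nu$ to be any value below the violated threshold then proves the claim.

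I expect the main obstacle to be understanding \emph{why} a coordinatewise argument in the spirit of \cite[Lemma 3.3]{Lu14B} cannot be used here: because of the coupling through $B$, zeroing out a single small entry $u_i$ may \emph{increase} $\|Bx\|_0$ (by up to the number of rows of $B$ meeting column $i$), so the local one-coordinate trade-off no longer obviously decreases the objective and the elementary estimate breaks down. The contradiction/compactness route above avoids this entirely by invoking only the global minimality of proximal points and the finiteness of support patterns. The one point requiring care is guaranteeing a genuine drop of at least $\lambda$ in $g$ at the limit, which is exactly where keeping the common support of $Cu^m$ fixed along the subsequence is essential, so that a vanishing limiting coordinate truly removes an element of ${\rm supp}(Bu^*)$ or ${\rm supp}(u^*)$ rather than being compensated elsewhere.
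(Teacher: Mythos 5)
Your proof is correct, but it takes a genuinely different route from the paper's. The paper also argues by contradiction from a sequence $\overline{z}^k\in\mathcal{Z}\setminus\{0\}$ with $\lvert C\overline{z}^k\rvert_{\min}\to 0$ (where $C=[B;I]$), but it derives the contradiction at each fixed large $k$ by \emph{constructing} an explicit competitor: it augments the index set annihilated by $C$ at $\overline{z}^k$ with the minimizing index $i$, projects $\overline{z}^k$ onto ${\rm Null}(C_{\widehat{J}_k\cdot})\cap\Omega$, and controls the size of this perturbation via a uniform lower bound on $\lambda_{\min}(C_{J\cdot}C_{J\cdot}^{\top})$ over full-row-rank submatrices together with a bounded-linear-regularity (Hoffman-type) constant $\kappa$ for the pair $\{{\rm Null}(C_{\widehat{J}_k\cdot}),\Omega\}$; the resulting point has strictly smaller combined support, so $g$ drops by at least $\lambda$ while the quadratic term moves by less than $\lambda/2$, contradicting optimality. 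You instead use pigeonhole on the finitely many support patterns plus compactness to extract a convergent subsequence, take the \emph{limit point} $u^*$ as the competitor, and obtain the contradiction only in the limit. Both arguments rest on the same dichotomy (a vanishing minimal entry of $Cu$ permits a strict support reduction whose gain $\lambda$ in $g$ eventually dominates the change in the quadratic term). What the paper's version buys is an explicit, quantitative threshold $\delta_0\sigma/(\kappa\|C\|_2)$ from which a computable $\nu$ could in principle be extracted; what yours buys is brevity and the avoidance of all the linear-algebraic machinery (error bounds, rank bookkeeping with $J_k$ and $\widehat{J}_k$), at the price of being purely existential. The one step you state slightly informally --- that some \emph{fixed} index of the common support of $Cu^m$ vanishes in the limit --- does require one further pigeonhole pass to freeze the minimizing index, but you flag exactly this point in your closing remarks, so the argument is complete.
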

\begin{proof}
 Write $C:=[B; I]$. By invoking \cite[Corollary 3]{bauschke99} and the compactness of $\Omega$, there exists $\kappa > 0$ such that for all index set $J\subset [n\!+\!p]$,
 \begin{equation}\label{bounded-linear-regular}
  {\rm dist}(x, {\rm Null}(C_{J\cdot})\cap \Omega)  \leq \kappa {\rm dist}(x, {\rm Null}(C_{J\cdot})) \ \ {\rm for\ any\ } x\in\Omega.
 \end{equation}
 In addition, there exists $\sigma > 0$ such that for any index set $J\subset [n\!+\!p]$ with $\{C_{j\cdot}\}_{j\in J}$ being linearly independent, 
 \begin{equation}\label{bound-lambdamin}
  \lambda_{\min}(C_{J\cdot}C_{J\cdot}^\top) \geq \sigma. 
 \end{equation}
 
For any $z\in \Xi$ and $\mu \in [\underline{\mu},\overline{\mu}]$, 
	define $h_{z,\mu}(x):= \frac{\mu}{2} \|x-z\|^2$ for $x\in\mathbb{R}^n$. By the compactness of $\Omega$, $[\underline{\mu},\overline{\mu}]$ and $\Xi$, there exists $\delta_0\in(0,1)$ such that for all $z\in\Xi$, $\mu\in [\underline{\mu},\overline{\mu}]$ and $x, y \in\Omega$ with $\|x - y\|<\delta_0$, $\overline{\mu}(\|x\|+\|y\|+2\|z\|)\|x-y\|<\lambda:=\min\{\lambda_1, \lambda_2\}$, and consequently,
	\begin{align}\label{temp-ineq0}
		\lvert h_{z,\mu}(x) - h_{z,\mu}(y)\rvert 
		&=\frac{\mu}{2} \lvert\langle x-y, x+y-2z\rangle \rvert \le \frac{\overline{\mu}}{2}(\|x\|+\|y\|+2\|z\|)\|x-y\|<\frac{\lambda}{2}. 
	\end{align}
	
	Now suppose that the conclusion does not hold. Then there is a sequence $\{\overline{z}^k\}_{k\in\mathbb{N}}\subset \mathcal{Z}\backslash\{0\}$ such that  $\lvert C\overline{z}^k\rvert_{\min}\le\frac{1}{k}$ for all $k\in\mathbb{N}$. Note that $C$ has a full column rank. We also have $\lvert C\overline{z}^k\rvert_{\min}>0$ for each $k\in\mathbb{N}$. By the definition of $\mathcal{Z}$, for each $k\in\mathbb{N}$, there exist $z^k\in\Xi$ and $\mu_k\in[\underline{\mu},\overline{\mu}]$ such that $\overline{z}^k \in {\rm prox}_{\mu_k^{-1}g}(z^k)$. Since $\lvert C\overline{z}^k\rvert_{\min}\in\big(0,\frac{1}{k}\big)$ for all $k\in\mathbb{N}$, there exist $\mathcal{K}\subset\mathbb{N}$ and an index $i\in [n\!+\!p]$ such that 
	\begin{equation}\label{temp-ineq1}
		0<\lvert (C\overline{z}^k)_i\rvert = \lvert C\overline{z}^k\rvert_{\min} < \frac{\delta_0\sigma}{\kappa \|C\|_2}\quad{\rm for\ each}\ k\in\mathcal{K}, 
	\end{equation}
	where $\kappa$ and $\sigma$ are the ones appearing in \eqref{bounded-linear-regular} and \eqref{bound-lambdamin}, respectively.
	Fix any $k\in\mathcal{K}$. Write $Q_k:=[n\!+\!p]\backslash{\rm supp}(C\overline{z}^k)$ and choose $J_k\subset Q_k$ such that the rows of $C_{J_k\cdot}$ form a basis of those of $C_{Q_k\cdot}$.
    Let $\widehat{J}_k:=J_k\cup\{i\}$. If $J_k=\emptyset$, then $C_{\widehat{J}_k\cdot}$ has a full row rank. If $J_k\ne\emptyset$, then $C_{J_k\cdot}\overline{z}^k=0$, which implies that $C_{\widehat{J}_k\cdot}$ also has a full row rank (if not, $C_{i\cdot}$ is a linear combination of the rows of $C_{J_k\cdot}$, which along with $C_{J_k\cdot}\overline{z}^k=0$ implies that $C_{i\cdot} \overline{z}^k = 0$, contradicting to $\lvert (C\overline{z}^k)_i\rvert=\lvert C\overline{z}^k\rvert_{\rm min} > 0$). 
	Let $\widetilde{z}^k := {\rm proj}_{{\rm Null}(C_{\widehat{J}_k\cdot})}(\overline{z}^k)$. Then, 
	$C_{\widehat{J}_k\cdot}\widetilde{z}^k = 0$ and $(\overline{z}^k-\widetilde{z}^k)\in{\rm Range}\,(C_{\widehat{J}_k\cdot}^{\top})$. The latter means that there exists $\xi^k\in\mathbb{R}^{\lvert\widehat{J}_k\rvert}$ such that $\overline{z}^k-\widetilde{z}^k=C_{\widehat{J}_k\cdot}^{\top}\xi^k$.
	Since  $C_{\widehat{J}_k\cdot}$ has a full row rank and $\| C_{\widehat{J}_k\cdot}\overline{z}^k\|=\lvert (C\overline{z}^k)_i\rvert$, we have
	\begin{equation}\label{Czbar-lb}
		\lvert (C\overline{z}^k)_i\rvert =
		\| C_{\widehat{J}_k\cdot}\overline{z}^k - C_{\widehat{J}_k\cdot} \widetilde{z}^k \|     =\|C_{\widehat{J}_k\cdot}C_{\widehat{J}_k\cdot}^{\top}\xi^k\|\ge \sigma \|\xi^k\|,
	\end{equation}
	where the last inequality is due to \eqref{bound-lambdamin}. Combining \eqref{Czbar-lb} with \eqref{temp-ineq1} yields
	$\|\xi^k\|<\kappa^{-1}\|C\|_2^{-1}\delta_0$. Therefore,
	\begin{equation}\label{bound-tildez}
		\|\overline{z}^k - \widetilde{z}^k\|=\|C_{\widehat{J}_k\cdot}^{\top}\xi^k\|\le \|C_{\widehat{J}_k\cdot}\|_2\|\xi^k\|\le\|C\|_2\|\xi^k\|<\kappa^{-1}\delta_0.  
	\end{equation}
	Let $\widehat{z}^k:= {\rm proj}_{{\rm Null}(C_{\widehat{J}_k\cdot})\cap \Omega}(\overline{z}^k)$. From \eqref{bounded-linear-regular} and \eqref{bound-tildez}, it follows that 
	\begin{equation}\label{bound}
		\|\overline{z}^k-\widehat{z}^k\| = {\rm dist}(\overline{z}^k, {\rm Null}(C_{\widehat{J}_k\cdot})\cap \Omega) \leq \kappa {\rm dist}(\overline{z}^k, {\rm Null}(C_{\widehat{J}_k\cdot})) = \kappa \|\overline{z}^k - \widetilde{z}^k\|<\delta_0. 
	\end{equation}
	Note that $\widehat{z}^k, \overline{z}^k \in \Omega$. From \eqref{bound} and \eqref{temp-ineq0}, it follows that
	\begin{equation}\label{temp-ineq3}
		\lvert h_{z^k,\mu_k}(\widehat{z}^k) - h_{z^k,\mu_k}(\overline{z}^k)\rvert < \frac{\lambda}{2}. 
	\end{equation}
    Next we claim that ${\rm supp}(C\widehat{z}^k) \cup \{i\} \subset {\rm supp}(C\overline{z}^k)$. Indeed, 
	since the rows of $C_{\widehat{J}_k\cdot}$ form a basis of those of $C_{[Q_k\cup\{i\}]\cdot}$ and $C_{\widehat{J}_k\cdot}\widehat{z}^k = 0$, $C_{[Q_k\cup \{i\}]\cdot}\widehat{z}^k = 0$. 
	Then, ${\rm supp}(C_{[Q_k\cup\{i\}]\cdot}\widehat{z}^k)\cup\{i\} = {\rm supp}(C_{[Q_k\cup\{i\}]\cdot}\overline{z}^k)$. Since all the entries of $C_{[Q_k\cup \{i\}]^c\cdot}\overline{z}^k$ are nonzero, it holds that ${\rm supp}(C_{[Q_k\cup \{i\}]^c\cdot}\widehat{z}^k) \subset {\rm supp}(C_{[Q_k\cup \{i\}]^c\cdot}\overline{z}^k)$, which implies that ${\rm supp}(C\widehat{z}^k) \cup \{i\} \subset {\rm supp}(C\overline{z}^k)$. Thus, the claimed inclusion follows, which implies that $g(\overline{z}^k) -  g(\widehat{z}^k) \geq \lambda$. This together with \eqref{temp-ineq3} yields
	$h_{z^k,\mu_k}(\overline{z}^k)+g(\overline{z}^k) - (h_{z^k,\mu_k}(\widehat{z}^k)+g(\widehat{z}^k)) \geq \lambda - \frac{\lambda}{2} = \frac{\lambda}{2}$, contradicting to $\overline{z}^k \in {\rm prox}_{\mu_k^{-1}g}(z^k)$. The proof is completed.
\end{proof}  

The result of Lemma \ref{lemma-lb} will be utilized in Proposition \ref{xk-prop} to justify the fact that the sequences $\{\lvert B\overline{x}^k\rvert_{\min}\}_{k\in\mathbb{N}}$ and $\{\lvert \overline{x}^k\rvert_{\min}\}_{k\in\mathbb{N}}$ are uniformly lower bounded, where $\overline{x}^k$ is obtained in \eqref{def-muk} (or \eqref{ls-PG} below). This is a crucial aspect in proving the stability of ${\rm supp}(x^k)$ and ${\rm supp}(Bx^k)$ when $k$ is sufficiently large.

	\subsection{Proximal mapping of a fused $\ell_0$-norms function with a box constraint}\label{sec3.2}
	Using the idea of \cite{killick12}, Jewell et al. \cite{jewell20} presented a polynomial-time algorithm for computing the proximal mapping of the fused $\ell_0$-norm $\lambda_1\|\widehat{B}\cdot\|_0$, where $\widehat{B}x = (x_1-x_2;...; x_{n-1}-x_n)$ for any $x\in\mathbb{R}^n$. We extend the result of \cite{jewell20} for computing the proximal mapping of the fused $\ell_0$-norms $\lambda_1\|\widehat{B}\cdot\|_0+\lambda_2\|\cdot\|_0+\delta_{\Omega}(\cdot)$, i.e., for any given $z\in\mathbb{R}^n$, seeking a global optimal solution of the problem  
	\begin{equation}\label{fusedl0-prox}
		\min_{x\in\mathbb{R}^n}\, h(x;z):= \frac{1}{2}\|x-z\|^2+\lambda_1 \|\widehat{B}x\|_0 + \lambda_2 \|x\|_0 + \delta_{\Omega}(x).
	\end{equation}
	To simplify the deduction, for each $i\in [n]$, we define $\omega_i:\mathbb{R}\rightarrow \overline{\mathbb{R}}$ by $\omega_i(\alpha) := \lambda_2 \lvert \alpha\rvert_0 + \delta_{[l_i, u_i]}(\alpha)$. It is clear that for all $x\in\mathbb{R}^n$, $\lambda_2 \|x\|_0 + \delta_{\Omega}(x) = \sum_{i=1}^n \omega_i(x_i)$. 
     Let $H(0):= -\lambda_1$, and for each $s\in [n]$, define 
 \begin{equation}\label{Hfun}
         H(s) := \min_{y\in\mathbb{R}^s}  h_s(y;z_{1:s}), \ {\rm where} \  h_s(y;z_{1:s}):=\frac{1}{2}\|y- z_{1:s}\|^2 + \lambda_1 \|\widehat{B}_{\cdot [s]}y\|_0 + \sum_{j=1}^s \omega_j(y_j).
 \end{equation}
 It is immediate to see that $H(n)$ is the optimal value to \eqref{fusedl0-prox}. For each $s\in[n]$, define function $P_{s}\!:[0\!:\!s\!-\!1]\times \mathbb{R} \to \overline{\mathbb{R}}$ by 
	\begin{align}\label{Ps-fun}
		P_s(i, \alpha) := H(i) + \frac{1}{2}  \|\alpha {\bf 1} - z_{i+1:s}\|^2 + \sum_{j=i+1}^s \omega_j(\alpha) +\lambda_1.
	\end{align}
    For any given $y\in \mathbb{R}^s$, if $i$ is the largest integer in $[0\!:\!s\!-\!1]$ such that $y_i\neq  y_{i+1}$, and $y_{i+1} = y_{i+2} = ... = y_s = \alpha$, then $y = (y_{1:i}; \alpha {\bf 1})$ and
    \begin{align*}
        h_s(y; z_{1:s})&  \!= \!\frac{1}{2}  \|y_{1:i} \!-\! z_{1:i}\|^2 \!+\! \lambda_1\|\widehat{B}_{\cdot [i]}y_{1:i}\|_0 \!+\! \sum_{j=1}^i \omega_j(y_j) \!+\! \frac{1}{2}  \|y_{i+1:s}\! -\! z_{i+1:s}\|^2 \!+\! \sum_{j=i+1}^s \omega_j(y_j) +\lambda_1\\
        & = h_i(y_{1:i}; z_{1:i})+ \frac{1}{2}  \|\alpha{\bf 1} - z_{i+1:s}\|^2 + \sum_{j=i+1}^s \omega_j(\alpha) +\lambda_1.
    \end{align*}
    If $y_{1:i}$ is optimal to $\min_{y'\in\mathbb{R}^i}  h_i(y';z_{1:i})$, then $H(i) = h_i(y_{1:i};z_{1:i})$, which by the definitions of $P_s$ and $h_s$ yields that $P_s(i, \alpha) = h_s(y; z_{1:s}).$
    In the following lemma, we prove that the optimal value of $\min_{i\in[0:s-1], \alpha\in\mathbb{R}}P_s(i,\alpha)$ is equal to $H(s)$, by which we give characterization to an optimal solution of $h_s(\cdot;z_{1:s})$.
    	\begin{lemma}\label{prop-transform}
     Fix any $s\in [n]$. The following statements are true.
     \begin{description}
         \item[(i)] $H(s) = \min_{i\in[0:s-1], \alpha \in \mathbb{R}} P_s(i,\alpha)$.
         \item[(ii)] Assume that $(i_s^*, \alpha_s^*) \in \mathop{\arg\min}_{i\in[0:s-1], \alpha \in \mathbb{R}} P_s(i,\alpha)$. Then $y^* = (y^*_{1:i_s^*}; \alpha_s^* {\bf 1})$ is a global solution of $\min_{y\in\mathbb{R}^s} h_{s}(y; z_{1:s})$ with $y_{1:i_s^*}^*\in \mathop{\arg\min}_{v\in\mathbb{R}^{i_s^*}} h_{i_s^*}(v;z_{1:i_s^*})$.
     \end{description}
	\end{lemma}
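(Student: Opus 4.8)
The plan is to read this as the correctness statement for a dynamic-programming recursion whose state variable is the position of the \emph{last jump} of $y$ under the first-difference operator, i.e. the largest index $i$ with $y_i\neq y_{i+1}$. The structural fact I would rely on is already recorded in the identity derived immediately before the lemma: once $y$ is split as $(y_{1:i};\alpha{\bf 1})$ at its last jump, the cost $h_s(y;z_{1:s})$ separates into the prefix cost $h_i(y_{1:i};z_{1:i})$, the quadratic and $\omega_j$ contributions of the constant tail, and a single $\lambda_1$ charge for the jump at position $i$. I would prove part (i) by the two inequalities $H(s)\le\min_{i,\alpha}P_s(i,\alpha)$ and $H(s)\ge\min_{i,\alpha}P_s(i,\alpha)$, and then derive part (ii) as a by-product of the construction used for the upper bound.

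For the upper bound, fix any $(i,\alpha)\in[0\!:\!s\!-\!1]\times\mathbb{R}$, let $v^*\in\mathop{\arg\min}_{v\in\mathbb{R}^i}h_i(v;z_{1:i})$ so that $h_i(v^*;z_{1:i})=H(i)$, and set $y:=(v^*;\alpha{\bf 1})\in\mathbb{R}^s$. The only delicate point is the $\ell_0$-count of jumps: $\|\widehat{B}_{\cdot[s]}y\|_0=\|\widehat{B}_{\cdot[i]}v^*\|_0$ when $v^*_i=\alpha$ (no boundary jump) and $\|\widehat{B}_{\cdot[s]}y\|_0=\|\widehat{B}_{\cdot[i]}v^*\|_0+1$ when $v^*_i\neq\alpha$, while the constant tail contributes nothing further; the degenerate case $i=0$ has an empty prefix, no boundary term, and is absorbed by the sentinel $H(0)=-\lambda_1$. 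In either case a short computation gives $h_s(y;z_{1:s})\le P_s(i,\alpha)$, with equality exactly when $v^*_i\neq\alpha$ and $h_s(y;z_{1:s})=P_s(i,\alpha)-\lambda_1$ otherwise. Since $H(s)\le h_s(y;z_{1:s})$, minimizing over $(i,\alpha)$ yields $H(s)\le\min_{i,\alpha}P_s(i,\alpha)$.

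For the lower bound, let $y^*\in\mathop{\arg\min}_{y\in\mathbb{R}^s}h_s(y;z_{1:s})$, let $i$ be the largest index in $[0\!:\!s\!-\!1]$ with $y^*_i\neq y^*_{i+1}$ (taking $i=0$ when $y^*$ is constant), and put $\alpha:=y^*_{i+1}=\cdots=y^*_s$. Applying the decomposition identity together with $h_i(y^*_{1:i};z_{1:i})\ge H(i)$ gives $H(s)=h_s(y^*;z_{1:s})\ge P_s(i,\alpha)\ge\min_{i',\alpha'}P_s(i',\alpha')$, where the $i=0$ case again uses $H(0)=-\lambda_1$ to cancel the artificial $\lambda_1$ in $P_s$. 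Combining the two inequalities proves (i).

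Finally, for (ii) I would apply the upper-bound construction to the minimizing pair $(i_s^*,\alpha_s^*)$ with $v^*=y^*_{1:i_s^*}$: the same estimate yields $h_s(y^*;z_{1:s})\le P_s(i_s^*,\alpha_s^*)=\min_{i,\alpha}P_s(i,\alpha)=H(s)$, the last equality being part (i). Since also $h_s(y^*;z_{1:s})\ge H(s)$ by definition of $H(s)$, equality holds and $y^*$ is a global minimizer of $h_s(\cdot;z_{1:s})$. I expect the only real obstacle to be the bookkeeping of the $\ell_0$ jump-count across the prefix/tail boundary, together with the correct treatment of the $i=0$ degenerate case; the first-difference structure of $\widehat{B}$ is exactly what makes this additive splitting exact.
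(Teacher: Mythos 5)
Your proposal is correct and takes essentially the same route as the paper: both inequalities in (i) are obtained by splitting $y$ at its last jump and invoking the additive decomposition of $h_s$ recorded just before the lemma (with the sentinel $H(0)=-\lambda_1$ absorbing the $i=0$ case and the inequality $h_s(y;z_{1:s})\le P_s(i,\alpha)$ covering the possible absence of a boundary jump), and (ii) is derived from the upper-bound construction combined with (i) exactly as in the paper.
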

	\begin{proof}
		{\bf (i)} Let $y^*$ be an optimal solution to problem \eqref{Hfun}. If $y^*_i=y^*_j$ for all $i,j\in [s]$, let $i_s^* = 0$; otherwise, let $i^*_s$ be the largest integer such that $y^*_{i^*_s}\neq y^*_{i^*_s+1}$. Set $\alpha_s^* = y^*_{i^*_s+1}$. If $i^*_s \neq 0$, from the definition of $H(\cdot)$, $h_{i^*_s}(y^*_{1:i^*_s};z_{1:i^*_s}) \geq H(i^*_s)$, which implies that 
        \begin{align*}
           & \min_{i\in[0:s-1], \alpha \in \mathbb{R}} P_s(i,\alpha)  \leq H(i^*_s) + \frac{1}{2}  \|\alpha^*_{s}{\bf 1} - z_{i_s^*+1:s}\|^2 + \sum_{j=i^*_s+1}^s \omega_j(\alpha^*_{s}) + \lambda_1 \\
           & \leq  h_{i_{s}^*}(y^*_{1:i^*_s};z_{1:i^*_s})  + \frac{1}{2}  \|y^*_{i^*_s+1:s} - z_{i_s^*+1:s}\|^2 + \sum_{j=i^*_s+1}^s \omega_j(y^*_j) + \lambda_1 = h_s(y^*;z_{1:s}) = H(s), 
        \end{align*}
        where the first equality holds by $y^*_{i^*_s+1} \neq y^*_{i^*_s}$ and the expression of $h_s(y^*;z_{1:s})$. If $i^*_s = 0$, 
        \begin{align*}
        \min_{i\in[0:s-1], \alpha \in \mathbb{R}} P_s(i,\alpha) \leq H(0) + \frac{1}{2}  \|y^* - z_{1:s}\|^2 + \sum_{j=1}^s \omega_j(y^*_j) + \lambda_1   = H(s).
        \end{align*}
        Therefore, $\min_{i\in[0:s-1], \alpha \in \mathbb{R}} P_s(i,\alpha) \leq H(s)$ holds.
        On the other hand, let $(i_s^*,\alpha^*_s)$ be an optimal solution to $\min_{i\in[0:s-1],\alpha\in\mathbb{R}} P_s(i,\alpha)$. If $i_s^*\neq 0$, let  $y^* \in \mathbb{R}^s$ be such that $y^*_{1:i_s^*} \in \mathop{\arg\min}_{v\in\mathbb{R}^{i_s^*}} h_{i_s^*}(v;z_{1:i_s^*})$ and $y^*_{i_s^*+1:s}= \alpha_s^* {\bf 1}$. Then, it is clear that 
        \begin{align*}
            H(s) &\leq h_s(y^*;z_{1:s}) \leq h_{i_s^*}(y^*_{1:i_s^*};z_{1:i_s^*}) + \frac{1}{2}  \|y^*_{i_s^*+1:s} - z_{i_s^*+1:s}\|^2 + \sum_{j=i_s^*+1}^s \omega_j(y^*_j) + \lambda_1 \\
             & = H(i_s^*) + \frac{1}{2}  \|\alpha_s^*{\bf 1} - z_{i_s^*+1:s}\|^2 + \sum_{j=i_s^*+1}^s \omega_j(\alpha_s^*) + \lambda_1 = \min_{i\in [0:s-1],\alpha\in\mathbb{R}} P_s(i,\alpha).
        \end{align*}
        If $i_s^* = 0$, let $y^* = \alpha_s^*{\bf 1}$. We have 
        $$ H(s) \leq h_s(y^*;z_{1:s}) = H(0)+ \frac{1}{2}  \|y^* - z_{1:s}\|^2 + \sum_{j=1}^s \omega_j(\alpha_s^*) +\lambda_1 = \min_{i\in [0:s-1],\alpha\in\mathbb{R}} P_s(i,\alpha).$$
        Therefore, $H(s) \leq \min_{i\in [0:s-1],\alpha\in\mathbb{R}} P_s(i,\alpha)$.
        These two inequalities imply the result.

        \noindent
        {\bf (ii)} If $i_s^* \neq 0$, by part (i) and the definitions of $\alpha_s^*$ and $i_s^*$, 
        \begin{align*}
            H(s) & = \min_{i\in[0:s-1],\alpha \in \mathbb{R}} P_s(i,\alpha) = H(i_s^*) + \frac{1}{2}  \|\alpha_s^*{\bf 1} - z_{i_s^*+1:s}\|^2 + \sum_{j=i_s^*+1}^s \omega_j(\alpha_s^*) +\lambda_1 \\
            & = h_{i^*_s}(y^*_{1:i_s^*};z_{1:i_s^*})  + \frac{1}{2}  \|y^*_{i_s^*+1:s} - z_{i_s^*+1:s}\|^2 + \sum_{j=i_s^*+1}^s \omega_j(y^*_j) +\lambda_1 \geq h_{s}(y^*, z_{1:s}),
        \end{align*}
        where the last inequality follows by the definition of $h_s(\cdot, z_{1:s})$. If $i_s^* = 0$, 
        $$H(s) = \min_{i\in[0:s-1],\alpha \in \mathbb{R}} P_s(i,\alpha) = H(0) + \frac{1}{2}  \|y^* - z_{1:s}\|^2 + \sum_{j=1}^s \omega_j(y^*_j) +\lambda_1 = h_s(y^*;z_{1:s}),$$
        Therefore, $H(s) \geq h_s(y^*;z_{1:s}).$ Along with the definition of $H(s)$, $H(s) = h_s(y^*;z_{1:s})$. 
 \end{proof}

 From Lemma \ref{prop-transform} (i), the nonconvex nonsmooth problem \eqref{fusedl0-prox} can be recast as a mixed-integer programming  with objective function given in \eqref{Ps-fun}. Lemma \ref{prop-transform} (ii) suggests a recursive method to obtain an optimal solution to \eqref{fusedl0-prox}. In fact, by setting $s=n$, we can obtain that there exists an optimal solution to \eqref{fusedl0-prox}, says $x^*$, such that $x^*_{i_n^*+1:n} = \alpha_n^* {\bf 1}$, and $x^*_{1:i_n^*}\in \mathop{\arg\min}_{v\in \mathbb{R}^{i_n^*}} h_{i_n^*}(v;z_{1:i_n^*})$. Next, by setting $s=i_n^*$, we are able to obtain the expression of $x^*_{i_{i_n^*}^*+1:i_n^*}$. Repeating this loop backward until $s = 0$,  we can obtain the full expression of an optimal solution to \eqref{fusedl0-prox}. The outline of computing ${\rm prox}_{\lambda_1\|\widehat{B}\cdot\|_0 + \omega(\cdot)}(z)$ is shown as follows.
 \begin{numcases}{}\label{prox-overview}
 {\rm Set\ the\ current\ changepoint}\ s = n. \notag \\{\bf\rm  While} \ s > 0 \  {\bf\rm do} \notag\\ \quad \quad {\rm Find} \ (i_{s}^*, \alpha_{s}^*) \in \mathop{\arg\min}_{i\in [0:s-1], \alpha\in\mathbb{R}} P_{s}(i,\alpha).\\
 \quad \quad {\rm Let} \ x^*_{i_s^*+1:s} = \alpha_{s}^*{\bf 1} \ {\rm and} \ s \gets i_{s}^*.\notag\\   {\rm End}\notag
 \end{numcases}
 
    To obtain an optimal solution to \eqref{fusedl0-prox}, the remaining issue is how to execute the first line in {\bf while} loop of \eqref{prox-overview}, or in other words, for any given $s\in[n]$, how to find $(i^*_s,\alpha_s^*)\in\mathbb{N}\times \mathbb{R}$ appearing in Lemma \ref{prop-transform} (ii). The following proposition provides some preparations.
	\begin{proposition}\label{prop-Ps}
		For each $s\in [n]$, let $P_{s}^*(\alpha):= \min_{i\in[0:s-1]}P_s(i,\alpha)$. 
		\begin{description}
			\item[(i)]  For all $\alpha\in\mathbb{R}$, 
          $$P_s^*(\alpha) = \left\{ \begin{array}{ll}
            \frac{1}{2}(\alpha-z_1)^2 + \omega_1(\alpha) & {\rm if} \ s=1, \\
             \min\Big\{ P_{s-1}^*(\alpha), \min_{\alpha'\in\mathbb{R}} P_{s-1}^*(\alpha') \!+\! \lambda_1\Big\} \!+\! \frac{1}{2}(\alpha\! -\! z_s)^2\! +\! \omega_s(\alpha)  & {\rm if} \ s\in [2\!:\!n].
          \end{array}\right.$$   
  
			\item[(ii)] Let $\mathcal{R}_1^0\!:=\mathbb{R}$, and $\mathcal{R}_s^{i}\!:=\mathcal{R}_{s-1}^{i} \cap (\mathcal{R}_s^{s-1})^c$
			for all $s\in[2\!:\!n]$ and $i\in[0\!:\!s\!-\!2]$, where 
			\begin{equation}\label{Rss-1}
				\mathcal{R}_s^{s-1} := \left\{ \alpha\in\mathbb{R} \ \lvert \ P_{s-1}^*(\alpha) \geq \min_{\alpha'\in\mathbb{R}} P_{s-1}^*(\alpha') + \lambda_1 \right\}.
			\end{equation}
			(a) For each $s\in [2\!:\!n]$, $\bigcup_{i\in [0:s-1]}\mathcal{R}_{s}^i = \mathbb{R}$ and $\mathcal{R}_s^i \cap \mathcal{R}_s^j = \emptyset$ for any $i\ne j \in [0\!:\!s\!-\!1]$. \\
           (b) For each $s\in[n]$ and $i\in [0\!:\!s\!-\!1]$,  $P_s^*(\alpha) = P_s(i,\alpha)$ when $\alpha \in \mathcal{R}_s^{i}$.
		\end{description} 
	\end{proposition}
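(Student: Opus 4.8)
The plan is to reduce the whole proposition to a single telescoping identity for $P_s(i,\cdot)$ and then run two short inductions on $s$. The starting observation is that, since $\frac{1}{2}\|\alpha\mathbf{1}-z_{i+1:s}\|^2=\sum_{j=i+1}^s\frac{1}{2}(\alpha-z_j)^2$, the definition \eqref{Ps-fun} splits cleanly according to whether the index $i$ is an ``old'' changepoint or the ``fresh'' one $i=s-1$. Concretely, for every $i\in[0\!:\!s\!-\!2]$ one has
\[
P_s(i,\alpha)=P_{s-1}(i,\alpha)+\tfrac{1}{2}(\alpha-z_s)^2+\omega_s(\alpha),
\]
while for $i=s-1$ the sum over $j$ is empty and
\[
P_s(s-1,\alpha)=H(s-1)+\tfrac{1}{2}(\alpha-z_s)^2+\omega_s(\alpha)+\lambda_1 .
\]
The ingredient tying these two together is Lemma \ref{prop-transform}(i) applied at stage $s-1$, which gives $H(s-1)=\min_{\alpha'}P_{s-1}^*(\alpha')$; this rewrites the $H(s-1)$ above as $\min_{\alpha'}P_{s-1}^*(\alpha')+\lambda_1$.

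For part (i), I would split $P_s^*(\alpha)=\min_{i\in[0:s-1]}P_s(i,\alpha)$ into the minimum over $i\in[0\!:\!s\!-\!2]$ and the single term $i=s-1$. The first displayed identity, together with $\min_{i\in[0:s-2]}P_{s-1}(i,\alpha)=P_{s-1}^*(\alpha)$, shows the first piece equals $P_{s-1}^*(\alpha)+\frac{1}{2}(\alpha-z_s)^2+\omega_s(\alpha)$, and the second piece equals $\min_{\alpha'}P_{s-1}^*(\alpha')+\lambda_1+\frac{1}{2}(\alpha-z_s)^2+\omega_s(\alpha)$; their minimum is exactly the claimed recursion, while the case $s=1$ is immediate from $H(0)=-\lambda_1$. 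For part (ii)(a) I would induct on $s$ using only the set algebra in \eqref{Rss-1} and $\mathcal{R}_s^i=\mathcal{R}_{s-1}^i\cap(\mathcal{R}_s^{s-1})^c$: disjointness of $\mathcal{R}_s^i,\mathcal{R}_s^j$ for $i,j\le s-2$ descends from that at stage $s-1$, disjointness from $\mathcal{R}_s^{s-1}$ is built into the intersection with $(\mathcal{R}_s^{s-1})^c$, and the inductive hypothesis $\bigcup_{i\in[0:s-2]}\mathcal{R}_{s-1}^i=\mathbb{R}$ gives $\bigcup_{i\in[0:s-2]}\mathcal{R}_s^i=(\mathcal{R}_s^{s-1})^c$, so adjoining $\mathcal{R}_s^{s-1}$ recovers $\mathbb{R}$.

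For part (ii)(b), I would induct on $s$ again, using the recursion from part (i) and the definition of $\mathcal{R}_s^{s-1}$ to decide which branch of the inner min is active. If $\alpha\in\mathcal{R}_s^{s-1}$, then $P_{s-1}^*(\alpha)\ge\min_{\alpha'}P_{s-1}^*(\alpha')+\lambda_1$, so the second branch is selected and $P_s^*(\alpha)=\min_{\alpha'}P_{s-1}^*(\alpha')+\lambda_1+\frac{1}{2}(\alpha-z_s)^2+\omega_s(\alpha)$, which equals $P_s(s-1,\alpha)$ after re-identifying $\min_{\alpha'}P_{s-1}^*(\alpha')$ with $H(s-1)$. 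If instead $\alpha\in\mathcal{R}_s^i$ with $i\le s-2$, then $\alpha\in\mathcal{R}_{s-1}^i$ and $\alpha\notin\mathcal{R}_s^{s-1}$, so the first branch is selected, giving $P_s^*(\alpha)=P_{s-1}^*(\alpha)+\frac{1}{2}(\alpha-z_s)^2+\omega_s(\alpha)$; the inductive hypothesis $P_{s-1}^*(\alpha)=P_{s-1}(i,\alpha)$ combined with the telescoping identity then yields $P_s(i,\alpha)$, closing the induction (the base $s=1$ being trivial since $\mathcal{R}_1^0=\mathbb{R}$ and $P_1^*=P_1(0,\cdot)$).

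I expect the main obstacle to be bookkeeping rather than any genuine difficulty: one must keep straight the two distinct roles of the index $i=s-1$ (a brand-new constant block that costs an extra $\lambda_1$ and references the already-optimized value $H(s-1)$) versus the indices $i\le s-2$ (which inherit the stage-$(s-1)$ structure and merely acquire one further $\frac{1}{2}(\alpha-z_s)^2+\omega_s(\alpha)$ term). The only place demanding care is aligning the nonstrict inequality defining $\mathcal{R}_s^{s-1}$ in \eqref{Rss-1} with the branch of the min that is actually attained, and verifying that this branch produces precisely the index $i$ labeling the region; but this correspondence is forced by the definition of the regions, so no nontrivial estimate is required.
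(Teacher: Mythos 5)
Your proposal is correct and follows essentially the same route as the paper's proof: the same telescoping identity $P_s(i,\alpha)=P_{s-1}(i,\alpha)+\frac{1}{2}(\alpha-z_s)^2+\omega_s(\alpha)$ for $i\le s-2$, the same use of Lemma \ref{prop-transform}(i) to rewrite $H(s-1)$ as $\min_{\alpha'}P_{s-1}^*(\alpha')$, and the same two inductions on $s$ for parts (ii)(a) and (ii)(b).
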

	\begin{proof}
		{\bf (i)} Note that $P_1^*(\alpha) = P_1(0,\alpha) = H(0) + \frac{1}{2}(\alpha - z_1)^2 + \omega_1(\alpha) + \lambda_1 = \frac{1}{2}(\alpha - z_1)^2 + \omega_1(\alpha).$  
		Now fix any $s\in [2:n]$. By the definition of $P_s^*$, for any $\alpha\in\mathbb{R}$,
		\begin{equation}\label{induction-Ps}
			P_s^*(\alpha) = \min_{i\in[0:s-1]} P_s(i,\alpha) = \min \Big\{ \min_{i \in [0:s-2]} P_{s}(i,\alpha),\,P_s(s\!-\!1,\alpha)\Big\}.
		\end{equation}
		From the definition of $P_s$ in \eqref{Ps-fun}, for each $i\in[0\!:\!s\!-\!2]$ and $\alpha \in \mathbb{R}$, it holds that
		\begin{align*}
			P_s(i,\alpha)  
			&= H(i) + \frac{1}{2} \|\alpha {\bf 1} - z_{i+1:s}\|^2 + \sum_{j=i+1}^s\omega_j(\alpha)+\lambda_1 \\
			&= H(i) + \frac{1}{2} \|\alpha {\bf 1} \!-\! z_{i+1:s-1}\|^2 + \sum_{j=i+1}^{s-1}\omega_j(\alpha)+\lambda_1+\frac{1}{2}( \alpha\!-\!z_s)^2+\omega_s(\alpha) \\
			&= P_{s-1}(i,\alpha) + \frac{1}{2}(\alpha-z_s)^2+\omega_s(\alpha),
		\end{align*}
	   while for any $\alpha\in\mathbb{R}$,
		\(
		P_s(s\!-\!1,\alpha) = H(s\!-\!1) + \frac{1}{2}(\alpha -z_s)^2 +\omega_s(\alpha)+ \lambda_1 .
		\) 
		By combining the last two equalities with \eqref{induction-Ps}, we immediately obtain that 
		\begin{equation}\label{expression-P}
			\begin{aligned}
				P_s^*(\alpha) 
				& = \min\Big\{ \min_{i\in [0:s-2]} P_{s-1}(i,\alpha),\, H(s\!-\!1) +\lambda_1\Big\} + \frac{1}{2}(\alpha -z_s)^2 + \omega_s(\alpha) \\
				& =\min\Big\{P_{s-1}^*(\alpha),  \min_{\alpha'\in\mathbb{R}}P_{s-1}^*(\alpha') +\lambda_1\Big\} + \frac{1}{2}(\alpha -z_s)^2 + \omega_s(\alpha),
			\end{aligned}
		\end{equation}
		where the last equality follows by Lemma \ref{prop-transform} (i). Thus, we get the desired result.
		
		\noindent
		{\bf (ii)} We first prove (a) by induction. When $s = 2$, since $\mathcal{R}_1^0 = \mathbb{R}$ and $\mathcal{R}_2^0 = \mathcal{R}_1^0 \cap (\mathcal{R}_2^1)^c$, we have $\mathcal{R}_2^0 \cup \mathcal{R}_2^1 = \mathbb{R}$ and $\mathcal{R}_2^0 \cap \mathcal{R}_2^1 = \emptyset.$ Assume that the result holds when $s = j$ for some $j \in [2\!:\!n\!-\!1]$. We consider the case $s = j\!+\!1$. Since $\mathcal{R}_s^{i}\!:=\mathcal{R}_{s-1}^{i} \cap (\mathcal{R}_s^{s-1})^c$
		for all $i\in[0\!:\!s\!-\!2]$ and $\bigcup_{i\in[0:s-2]} \mathcal{R}_{s-1}^{i} = \mathbb{R}$, it holds that 
		\begin{align*}
		{\textstyle\bigcup_{i\in[0:s-1]}} \mathcal{R}_{s}^{i} & =\big[{\textstyle \bigcup_{i\in[0:s-2]} (\mathcal{R}_{s-1}^{i} \cap (\mathcal{R}_s^{s-1})^c)}\big] \cup  \mathcal{R}_{s}^{s-1} = \big(\mathbb{R} \cap (\mathcal{R}_s^{s-1})^c\big)\cup  \mathcal{R}_{s}^{s-1} = \mathbb{R}.
		\end{align*}
		Thus we obtain the first part of (a) by deduction. For any $i \in [0\!:\!s\!-\!2]$, by definition, $\mathcal{R}_s^{i}\cap \mathcal{R}_s^{s-1} = \emptyset$. It suffices to show that $\mathcal{R}_s^{i}\cap \mathcal{R}_s^{j} = \emptyset$ for any $i\ne j \in [0\!:\!s\!-\!2]$. By definition, 
		$$\mathcal{R}_s^{i}\cap \mathcal{R}_s^{j} = \big[\mathcal{R}_{s-1}^i \cap (\mathcal{R}^{s-1}_s)^c\big] \cap \big[\mathcal{R}_{s-1}^j \cap (\mathcal{R}^{s-1}_s)^c\big] = \emptyset,$$
		where the last equality is using $\mathcal{R}_{s-1}^i \cap\mathcal{R}_{s-1}^j = \emptyset.$ Thus, the second part of (a) is obtained. 
		
		Next we prove (b). Since for any $\alpha \in \mathbb{R} = \mathcal{R}_1^0$, $P_1^*(\alpha) = P_1(0,\alpha)$, the result holds for $s= 1$. For $s \in [2\!:\!n]$ and $i\!=\!s\!-\!1$, by the definition of $\mathcal{R}_s^{s-1}$, for all $\alpha \in \mathcal{R}_s^{s-1}$, 
		\[
			P_s^*(\alpha)  = \min_{\alpha'\in\mathbb{R}} P_{s-1}^*(\alpha') + \lambda_1 + \frac{1}{2}(\alpha - z_s)^2 + \omega_s(\alpha)  = P_s(s-1,\alpha),
		\]
		where the second equality is using Lemma \ref{prop-transform} (i) and the definition of $P_s$.  Next we consider $s\in[2\!:\!n]$ and $i \in [0\!:\!s\!-\!2]$.
		We argue by induction that $P_s^*(\alpha) = P_s(i,\alpha)$ when $\alpha \in \mathcal{R}_s^{i}$. Indeed, when $s=2$, since $\mathcal{R}_2^0 = \mathcal{R}_1^0\cap (\mathcal{R}_2^1)^c = (\mathcal{R}_2^1)^c$, for any $\alpha\in\mathcal{R}_2^0$, from \eqref{Rss-1} we have $P_{1}^*(\alpha)<\min_{\alpha'\in\mathbb{R}}P_1^*(\alpha')+\lambda_1$, which by part (i) implies that $P_2^*(\alpha) =P_{1}^*(\alpha)+\frac{1}{2}(\alpha-z_2)^2+\omega_2(\alpha)=P_{1}(0,\alpha)+\frac{1}{2}(\alpha-z_2)^2+\omega_2(\alpha)=P_2(0,\alpha)$. Assume that the result holds when $s=j$ for some $j\in [2\!:\!n\!-\!1]$. We consider the case for $s=j\!+\!1$. For any $i\in [0\!:\!s\!-\!2]$, by definition, $\mathcal{R}_{s}^i = \mathcal{R}_{s-1}^i \cap (\mathcal{R}_{s}^{s-1})^c$. Then, from \eqref{expression-P} for any $\alpha \in \mathcal{R}_{s}^i$, 
		\begin{align*}
			P_{s}^*(\alpha) & = P_{s-1}^*(\alpha) + \frac{1}{2}(\alpha -z_{s})^2 + \omega_{s}(\alpha) = P_{s-1}(i,\alpha)+ \frac{1}{2}(\alpha -z_{s})^2 + \omega_{s}(\alpha)\\
			& = H(i)+ \frac{1}{2}\|\alpha {\bf 1} - z_{i+1:s-1}\|^2 + \sum_{j=i+1}^{s-1}w_{j}(\alpha) +\lambda_1  + \frac{1}{2}(\alpha\!-\!z_{s})^2 + \omega_{s}(\alpha) \\
            & = H(i) + \frac{1}{2}\|\alpha {\bf 1} - z_{i+1:s}\|^2 + \sum_{j=i+1}^{s}w_{j}(\alpha)+\lambda_1  = P_{s}(i,\alpha),
		\end{align*}
        where the second equality is using $P_{s-1}^*(\alpha)=P_{s-1}(i,\alpha)$ implied by induction. Hence, the conclusion holds for $s=j+1$ and any $i\in[0\!:\!s\!-\!2]$. The proof is completed.
	\end{proof}
	
   Now we take a closer look at Proposition \ref{prop-Ps}. Part (i) provides a recursive method to compute $P_s^*(\alpha)$ for all $s\in[n]$. For each $s\in [n]$, by the expression of $\omega_s$, $P_s(i,\cdot)$ is a piecewise lower semicontinuous linear-quadratic function whose domain is a closed interval, relative to which $P_s(i,\cdot)$ has an expression of the form $H(i)+ \frac{1}{2}\|\alpha {\bf 1} - z_{i+1:s}\|^2 + (s-i)|\alpha|_0 +\lambda_1$, while $P_s^*(\cdot)=\min\{P_s(0,\cdot),P_s(1,\cdot),\ldots,P_{s}(s-1,\cdot)\}$. Note that for each $i\in [0\!:\!s\!-\!1]$, the optimal solution to $\min_{\alpha\in\mathbb{R}} P_{s}(i,\alpha)$ is easily obtained (in fact, all the possible candidates of the global solutions are $0, \frac{\sum_{j=i+1}^s z_{j}}{s-i}, \max_{j\in[i+1:s]} \{l_{j}\}, \min_{j\in[i+1:s]} \{u_{j}\}$), so is $\mathop{\arg\min}_{\alpha'\in\mathbb{R}} P_s^*(\alpha')$. Part (ii) suggests a way to search for $i_s^*$ such that $P_s^*(\alpha_s^*) = P_s(i_s^*,\alpha_s^*)$ for each $s\in [n]$. Obviously, $P_s(i_s^*,\alpha_s^*) = \min_{i\in[0:s-1],\alpha\in\mathbb{R}}P_s(i,\alpha).$ This inspires us to propose Algorithm \ref{pruning} for solving ${\rm prox}_{\lambda_1 \|\widehat{B}\cdot\|_0 + \omega(\cdot)}(z)$, whose iterate steps are described as follows.
	
	\begin{algorithm}[!h]
		\caption{ (Computing ${\rm prox}_{\lambda_1 \|\widehat{B}\cdot\|_0 + \omega(\cdot)}(z)$) }\label{pruning}  
		1. \textbf{Initialize:} Compute $P_1^*(\alpha)= \frac{1}{2}(z_1-\alpha)^2 + \omega_1(\alpha)$ and set $\mathcal{R}_1^0 = \mathbb{R}.$ 
		
		2. {\bf For} $s= 2,\ldots,n\ $  {\bf do}
		
		3. \quad $P_s^*(\alpha):=\min\{ P_{s-1}^*(\alpha), \min_{\alpha'\in\mathbb{R}} P_{s-1}^*(\alpha') + \lambda_1\} +\frac{1}{2}(\alpha- z_s)^2 + \omega_s(\alpha).$
		
		4. \quad Compute $\mathcal{R}_{s}^{s-1}$ by \eqref{Rss-1}.
		
		5. \quad {\bf For} \ $i = 0,\ldots, s-2$ {\bf do}
		
		6. \quad \quad $\mathcal{R}_s^{i} = \mathcal{R}_{s-1}^{i} \cap (\mathcal{R}_s^{s-1})^c$.
		
		7. \quad {\bf End}
		
		8. {\bf End}
		
		9. Set the current changepoint $s = n$.
		
		10. {\bf While}  $s > 0$  {\bf do}
		
		11.\quad  Find $\alpha_s^* \in \mathop{\arg\min}_{\alpha \in \mathbb{R}} P_{s}^*(\alpha)$, and  $i_s^* = \left\{i\ \lvert\ \alpha_s^* \in \mathcal{R}_{s}^{i} \right\}$.
		
		12. \quad $x^*_{i_s^*+1:s} = \alpha_s^*{\bf 1}$ and $s\gets i_s^*.$
		
		13. {\bf End}
	\end{algorithm}
	
   The main computation cost of Algorithm \ref{pruning} comes from lines 3 and 6, in which the number of pieces of the linear-quadratic functions involved in $P_s^*$ plays a crucial role. The following lemma gives a worst-case estimation for it in the $s$-th iterate.
 \begin{lemma}\label{lemma-pieceofPs}
  Fix any $s\in [2:n]$. The function $P_s^*$ in line 3 of Algorithm \ref{pruning} has at most $O(s^{1+o(1)})$ linear-quadratic pieces.
 \end{lemma}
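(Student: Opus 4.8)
My plan is to read off $P_s^*$ as the lower envelope of $s$ univariate functions and to bound the complexity of that envelope by a Davenport--Schinzel argument. Recall from Proposition~\ref{prop-Ps} that $P_s^*(\alpha)=\min_{i\in[0:s-1]}P_s(i,\alpha)$, so the number of linear--quadratic pieces of $P_s^*$ is exactly the combinatorial complexity of the pointwise minimum of the family $\{P_s(i,\cdot)\}_{i=0}^{s-1}$. The first step is to make each summand explicit: using $\sum_{j=i+1}^s\omega_j(\alpha)=(s-i)\lambda_2|\alpha|_0+\delta_{D_i}(\alpha)$ with $D_i:=\bigcap_{j=i+1}^s[l_j,u_j]$, formula~\eqref{Ps-fun} shows that on its domain $D_i$ the function $P_s(i,\cdot)$ coincides, off the single point $\alpha=0$, with the strictly convex quadratic $\tfrac{s-i}{2}\alpha^2+b_i\alpha+c_i$, and carries an isolated downward spike of height $(s-i)\lambda_2$ at $\alpha=0$. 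In particular each $P_s(i,\cdot)$ is built from only $O(1)$ linear--quadratic pieces (two parabolic arcs separated by $0$, truncated to the endpoints of $D_i$).

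The decisive step is to bound how many times two summands can exchange dominance. Fix $i<i'$; on the common domain $D_i\cap D_{i'}=D_i$ and away from $\alpha=0$, the difference $P_s(i,\cdot)-P_s(i',\cdot)$ is a quadratic whose leading coefficient $\tfrac{i'-i}{2}$ is strictly positive, hence it vanishes at most twice and changes sign at most twice. Adding the isolated contributions at $\alpha=0$ and at the two endpoints of $D_i$, any pair of the functions crosses at most a fixed constant number $t$ of times. Consequently, if along the real line we record the index $i(\alpha)$ that attains the minimum defining $P_s^*(\alpha)$, the resulting string of labels over the alphabet $\{0,1,\dots,s-1\}$ forbids long alternations and is therefore a Davenport--Schinzel sequence of order $t$.

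Finally I would invoke the classical estimate: the maximal length $\beta_t(s)$ of an order-$t$ Davenport--Schinzel sequence over $s$ symbols satisfies $\beta_t(s)=s^{1+o(1)}$ for every fixed $t$ (the subpolynomial factor involves only the extremely slowly growing inverse Ackermann function). Since the number of pieces of $P_s^*$ is at most the number of maximal constant-label intervals times the $O(1)$ intrinsic pieces of each summand, this yields the asserted $O(s^{1+o(1)})$ bound. I expect the genuine obstacle to be the crossing count rather than the invocation of Davenport--Schinzel theory: one must treat with care the discontinuity produced by the $\lambda_2|\alpha|_0$ term at $\alpha=0$ and the truncations to the boxes $[l_j,u_j]$, so as to certify that the minimizing labels really do form a sequence of constant order, after which the near-linear estimate follows mechanically.
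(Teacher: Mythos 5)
Your proposal is correct and follows essentially the same route as the paper: both express $P_s^*$ as the lower envelope of the family $\{P_s(i,\cdot)\}$, bound the number of pairwise crossings by a constant using the strictly positive leading coefficient $\tfrac{i'-i}{2}$ of the quadratic difference, and invoke the Davenport--Schinzel bound $s^{1+o(1)}$. The paper handles the discontinuity at $\alpha=0$ and the box truncations (the "genuine obstacle" you flag) by explicitly splitting each summand into three continuous pieces supported on closed intervals, arriving at a $(3s,4)$ Davenport--Schinzel sequence; note that for partially defined curves crossing at most $t$ times the relevant order is $t+2$ rather than $t$, though this does not affect the $s^{1+o(1)}$ conclusion.
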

 \begin{proof}
  Let $h_{i}(\alpha):= H(i)+\frac{1}{2}\|\alpha{\bf 1}-z_{i+1:s}\|^2 + \lambda_1 +(s-i)\lambda_2|\alpha|_0 + \sum_{j=i+1}^{s}\delta_{[l_j,u_j]}(\alpha)$ for $\alpha\in\mathbb{R}$ with $i\in[0\!:\!s\!-\!2]$. For each $i\in [0\!:\!s\!-\!2]$, $h_i$ is a piecewise lower semicontinuous linear-quadratic function whose domain is a closed interval, and every piece is continuous on the closed interval except $\alpha = 0$. Therefore, for each $i\in[0\!:\!s\!-\!2]$, $h_i=\min\big\{h_{i,1},h_{i,2},h_{i,3}\big\}$ with $h_{i,1}(\alpha) := h_i(\alpha)-(s-i)\lambda_2|\alpha|_0 + (s-i)\lambda_2 +\delta_{(-\infty, 0]}(\alpha),\,h_{i,2}(\alpha):=h_i(\alpha) +\delta_{\{0\}}(\alpha)$ and $h_{i,3}(\alpha) := h_i(\alpha)-(s-i)\lambda_2|\alpha|_0 + (s-i)\lambda_2 +\delta_{[0, \infty)}(\alpha)$. Obviously, $h_{i1},h_{i,2}$ and $h_{i,3}$ are  piecewise linear-quadratic functions with domain being a closed interval. In addition, write 
  $h_{s-1}(\alpha) := \min_{\alpha'\in \mathbb{R}} P_{s-1}^*(\alpha')+ \frac{1}{2}(\alpha-z_s)^2 + \lambda|\alpha|_0 + \lambda_1+ \delta_{[l_s, u_s]}(\alpha)$ for $\alpha\in\mathbb{R}$. Obviously, $h_{s-1}$ is a piecewise lower semicontinuous linear-quadratic function whose domain is a closed interval. From the above arguement, $h_{s-1}=\min\{h_{s-1,1},h_{s-1,2},h_{s-1,3}\}$ where each $h_{s-1,j}$ for $j=1,2,3$ is a piecewise linear function whose domain is a closed interval. Combining the above discussion with  line 3 of Algorithm \ref{pruning} and the definition of $P_{s-1}^*$, for any $\alpha\in\mathbb{R}$,  
 \begin{align*}
  P_s^*(\alpha)\!&=\min_{i\in [0:s-2]}\Big\{ P_{s-1}(i, \alpha)\!+\! \frac{1}{2}(\alpha-z_s)^2\!+\!\omega_s(\alpha), \min_{\alpha'\in \mathbb{R}} P_{s-1}^*(\alpha')\!+\! \frac{1}{2}(\alpha-z_s)^2\!+\!\omega_s(\alpha) \!+\!\lambda_1 \Big\}\\
  &=\Big\{h_0(\alpha),h_1(\alpha),\ldots,h_{s-2}(\alpha),h_{s-1}(\alpha)\Big\}=\min_{i\in[0:s-1], j\in [3]}\big\{h_{i,j}(\alpha)\big\}.
 \end{align*}
  Notice that any $h_{i,j}$ and $h_{i',j'}$ with $i\ne i'\in[0:s\!-\!1]$ or $j\ne j'\in[3]$ crosses at most $2$ times. From \cite[Theorem 2.5]{sharir88} the maximal number of linear-quadratic pieces involved in $P_s^*$ is bounded by the maximal length of a $(3s,4)$ Davenport-Schinzel sequence, which by \cite[Theorem 3]{davenport65} is $3c_1 s \exp(c_2\sqrt{\log 3s})$. Here,  $c_1, c_2$ are positive constants independent of $s$. Thus, we conclude that the maximal number of linear-quadratic pieces involved in $P_s^*$ is $O(s^{1+o(1)})$. The proof is finished.
  \end{proof}

  By invoking Lemma \ref{lemma-pieceofPs}, we are able to provide a worst-case estimation for the complexity of Algorithm \ref{pruning}. Indeed, the main cost of Algorithm \ref{pruning} consists in lines 3 and 5-7. Since line 3 involves the computation cost proportional to the pieces of $P_{s-1}^*$, from Lemma \ref{lemma-pieceofPs}, it requires $O(s^{1+o(1)})$ operation. For each $i\in[0\!:\!s\!-\!1]$, from part (b) of Proposition \ref{prop-Ps} (ii), we know that $\mathcal{R}_{s}^{i}$ consists of at most $O(s^{1+o(1)})$ intervals, which means that line 6 requires at most $O(s^{1+o(1)})$ operations and then the complexity of lines 5-7 is $O(s^{2+o(1)})$. Thus, the worst-case complexity of Algorithm \ref{pruning} is $\sum_{s=2}^n O(s^{2+o(1)})=O(n^{3+o(1)})$.
\section{A hybrid of PG and inexact projected regularized Newton methods}\label{sec4}

 In the hybrid frameworks owing to \cite{Themelis18} and \cite{bareilles22}, the PG and Newton steps are alternating. We now state the details of our algorithm, a hybrid of PG and inexact projected regularized Newton methods (PGiPN), for solving problem \eqref{model}, where the introduction of the switch condition \eqref{switch-condition} is due to the consideration that the PG step is more cost-effective than the Newton step when the iterates are far from a stationary point. Let $x^k\in\Omega$ be the current iterate.  It is noted that the PG step is always executed and if condition \eqref{switch-condition} is met, we need to solve \eqref{subp}, which involves constructing $G_k$ to satisfy \eqref{con-Gk}. Such $G^k$ can be easily achieved in the following situations.
 
 For some generalized linear models, $f$ can be expressed as $f(x) = h(Ax-b)$ for some $A\in\mathbb{R}^{m\times n}$, $b\in\mathbb{R}^m$ and twice continuously differentiable, separable $h$. For this case, $\nabla^2h$ is a diagonal matrix, and $\nabla^2 \!f(x) = A^{\top}\nabla^2h(Ax-b)A$. Since $\nabla^2\!f(x^k)$ is not necessarily positive definite, following the method in \cite{liu22}, we construct $G_k := G_k^1$, where
\begin{equation}\label{rnm_liu}
 G_k^1:=\nabla^2 f(x^k)\!+b_2 [-\lambda_{\min}(\nabla^2\!h(Ax^k-b))]_+A^{\top} A + b_1 \|\overline{\mu}_k(x^k\!-\!\overline{x}^k)\|^{\sigma} I
 \end{equation}
 with $b_2 \geq 1$. However, for highly nonconvex $h$, $[-\lambda_{\min}(\nabla^2\!h(Ax^k-b))]_+$ is large, for which $G_k^1$ is a poor approximation to $\nabla^2 f(x^k)$. To avoid this drawback and simultaneously make $G_k$ positive definite, we consider $G_k:=G_k^2$, where
\begin{equation}\label{rnm_new}
    G_k^2:= A^{\top}[\nabla^2\!h(Ax^k-b)]_+ A + b_1 \|\overline{\mu}_k(x^k\!-\!\overline{x}^k)\|^{\sigma} I.
\end{equation}
For the case where $\nabla^2 \!f(x^k) \succeq 0$, $G_k^1 = G_k^2$. If $\nabla^2\!f(x^k) \not\succeq 0$, it is immediate to see that $\|G_k^1 - \nabla^2\! f(x^k)\|_2 \geq \|G_k^2 - \nabla^2\! f(x^k)\|_2$, which means that $G_k^2$ is a better approximation to $\nabla^2 f(x^k)$ than $G_k^1$.
On the other hand, for those $f$'s not owning a separable structure, we form $G_k:=G_k^3$ as in \cite{Ueda10} and \cite{wu22}, where
\begin{equation}\label{rnm_old}
    G_k^3:=\nabla^2 f(x^k)\!+\big(b_2[ -\lambda_{\min}(\nabla^2 f(x^k))]_+ + b_1 \|\overline{\mu}_k(x^k\!-\!\overline{x}^k)\|^{\sigma}\big) I.  
\end{equation}
 It is not hard to check that for $i=1,2,3$, $G_k^i$ meets the requirement in \eqref{con-Gk}. We remark here that the sequel convergence analysis holds for all three $G_k^i$, and we write them by $G_k$ for simplicity.
  
The iterates of PGiPN are described as follows.
 \begin{algorithm}[!ht]
  \caption{(a hybrid of PG and inexact projected regularized Newton methods)}\label{hybrid}
  \textbf{Initialization:} Choose $\epsilon\geq 0$ and parameters $\mu_{\max}>\mu_{\min}>0,\,\tau > 1,\,\alpha>0,b_1>0, b_2\geq 1$, $\varrho \in (0,\frac{1}{2}),\,\sigma\in (0,\frac{1}{2}),\,\varsigma\in(\sigma, 1]$ and $\beta\in(0,1)$. Choose an initial $x^0\in\Omega$ and let $k:=0$. 
	
  \medskip
  {\bf PG Step:}
  	
   (1a) Select $\mu_k \in [\mu_{\min}, \mu_{\max}]$. Let $m_k$ be the smallest nonnegative integer $m$ such that 
    \begin{equation}\label{ls-PG}
     F(\overline{x}^k)\le F(x^k)-\frac{\alpha}{2}\|x^k\!-\!\overline{x}^k\|^2\ \ {\rm with}\ \
     \overline{x}^k\in {\rm prox}_{(\mu_k \tau^m)^{-1}g}(x^k\!-\!(\mu_k \tau^m)^{-1}\nabla f(x^k)).
    \end{equation} 
   
   (1b) Let $\overline{\mu}_k=\mu_k \tau^{m_k}$. If $\overline{\mu}_k\|x^k - \overline{x}^k\| \leq \epsilon,$ stop and output $x^k$; otherwise, go to step (1c). 
	
   (1c) If condition \eqref{switch-condition} holds, go to Newton step; otherwise, let $x^{k+1} = \overline{x}^k$. Set $k \gets k+1$ and return to step (1a).

   \medskip
   {\bf Newton step:}
	
   (2a) Seek an inexact solution $y^k$ of \eqref{subp} satisfying \eqref{inexact-cond1}-\eqref{inexact-cond2}. 
	
   (2b) Set $d^k:= y^k-x^k$. Let $t_k$ be the smallest nonnegative integer $t$ such that
	\begin{equation}\label{ls-NT}
		f(x^k+\beta^{t}d^k) \leq f(x^k) + \varrho\beta^t\langle \nabla f(x^k), d^k\rangle.
	\end{equation}
	
   (2c) Let $\alpha_k = \beta^{t_k}$ with $x^{k+1}= x^k\!+\!\alpha_k d^k$. Set $k \gets k+1$ and return to PG step.    
 \end{algorithm}	
 \begin{remark}\label{remark-hybrid}
  {\bf (i)} Our PGiPN benefits from the PG step in two aspects. First, the incorporation of the PG step can guarantee that the sequence generated by PGiPN remains in a right position for convergence. Second, the PG step helps to identify adaptively the subspace used in the Newton step, and as will be shown in Proposition \ref{prop-supp}, switch condition \eqref{switch-condition} always holds and the supports of $\{Bx^k\}_{k\in\mathbb{N}}$ and $\{x^k\}_{k\in\mathbb{N}}$ keep unchanged when $k$ is sufficiently large, so that Algorithm \ref{hybrid} will reduce to an inexact projected regularized Newton method for solving \eqref{formulated-prob} with $\Pi_k \equiv \Pi_*$. In this sense, the PG step plays a crucial role in transforming the original challenging problem \eqref{model} into a problem that can be efficiently solved by the inexact projected regularized Newton method. 

  \noindent
  {\bf (ii)} When $x^k$ enters the Newton step, from the inexact criterion \eqref{inexact-cond1} and the expression of $\Theta_k$, $ 0 \ge\Theta_k(x^k\!+\!d^k)-\Theta_k(x^k) = \langle \nabla f(x^k), d^k\rangle + \frac{1}{2}\langle d^k, G_kd^k\rangle$, and then 
  \begin{equation}\label{convex_subgrad}
   \langle \nabla f(x^k), d^k\rangle \le -\frac{1}{2} \langle d^k, G_kd^k\rangle \leq -\frac{b_1}{2} \|\overline{\mu}_k(x^k - \overline{x}^k)\|^{\sigma}\|d^k\|^2 < 0,
  \end{equation} 
  where the second inequality is due to \eqref{con-Gk}. In addition, the inexact criterion \eqref{inexact-cond1} implies that $y^k\in\Pi_k$, which along with $x^k\in \Pi_k$ and the convexity of $\Pi_k$ yields that $x^{k} + \alpha d^k\in \Pi_k$ for any $\alpha\in (0,1]$. By the definition of $\Pi_k$, ${\rm supp}(B(x^{k} + \alpha d^k)) \subset {\rm supp}(Bx^k)$ and ${\rm supp}(x^{k} + \alpha d^k) \subset {\rm supp}(x^k)$, so $g(x^{k} + \alpha d^k) \leq g(x^k)$ for any $\alpha\in (0,1]$. This together with \eqref{convex_subgrad} shows that the iterate along the direction $d^k$ will reduce the value of $F$ at $x^k$.

  \noindent
  {\bf(iii)} When $\epsilon=0$, by Definition \ref{def1-Spoint} the output $x^k$ of Algorithm \ref{hybrid} is an $L$-stationary point of \eqref{model}, which is also a stationary point of problem \eqref{formulated-prob} from Proposition \ref{prop-opt} and Lemma \ref{subdiff-hpconvex} (i). Let $r_k\!:\mathbb{R}^n\to\mathbb{R}^n$ be the KKT residual mapping of \eqref{formulated-prob} defined by
  \begin{equation}\label{def-rk}
   r_k(x):= \overline{\mu}_k [x - {\rm proj}_{\Pi_k} (x - \overline{\mu}_k^{-1} \nabla f(x))].
  \end{equation}
  It is not difficult to verify that when $x^k$ satisfies condition \eqref{switch-condition}, the following relation holds 
  \begin{equation}\label{Eq-KKTprox}
   r_k(x^k)=\overline{\mu}_k(x^k- \overline{x}^k),
  \end{equation}
  for which it suffices to argue that $\overline{x}^k\!={\rm proj}_{\Pi_k}(x^k-\overline{\mu}_k^{-1}\nabla f(x^k))$. Indeed, if not, there exists $\overline{z}^k\in\Pi_k$ such that $\widetilde{h}_k(\overline{z}^k)< \widetilde{h}_k(\overline{x}^k)$, where $\widetilde{h}_k(x):=\frac{\overline{\mu}_k}{2}\|x-(x^k - \overline{\mu}_k^{-1}\nabla f(x^k))\|^2$. Since $\overline{z}^k \in \Pi_k$, we have ${\rm supp}(B\overline{z}^k) \subset{\rm supp}(B\overline{x}^k)$ and ${\rm supp}(\overline{z}^k) \subset{\rm supp}(\overline{x}^k)$, which implies that $g(\overline{z}^k) \leq g(\overline{x}^k)$ and then  $\widetilde{h}_k(\overline{z}^k) + g(\overline{z}^k) < \widetilde{h}_k(\overline{x}^k) + g(\overline{x}^k)$, a contradiction to $\overline{x}^k\in {\rm prox}_{\overline{\mu}_k^{-1}g}(x^k - \overline{\mu}_k^{-1}\nabla f(x^k))$. 
  
  \noindent
  {\bf (iv)} By using \eqref{fact1} and the descent lemma \cite[Proposition A.24]{NP97}, the line search in step (1a) must stop after a finite number of backtrackings. In fact, the line search in step (1a) is satisfied when $\mu_k \tau^m \geq L_1+\alpha$, which implies that $\overline{\mu}_k < \widetilde{\mu} :=\tau (L_1\!+\alpha)$ for each $k\in\mathbb{N}$.
 \end{remark}

 By Remark \ref{remark-hybrid} (iv), to show that Algorithm \ref{hybrid} is well defined, we only need to argue that the Newton steps in Algorithm \ref{hybrid} are well defined, which is implied by the following lemma. 
 \begin{lemma}\label{well-defined}
  For each $k\in\mathbb{N}$, define the KKT residual mapping $R_k\!:\mathbb{R}^n\to\mathbb{R}^n$ of \eqref{subp} by 
 \[
  R_k(y):= \overline{\mu}_k[y - {\rm proj}_{\Pi_k}(y- 
 \overline{\mu}_k^{-1}(G_k (y-x^k) + \nabla\! f(x^k)))].
 \]
 Then, for those $x^k$'s satisfying \eqref{switch-condition}, the following statements are true.
 \begin{description}
 \item[(i)] For any $y$ close enough to the optimal solution of \eqref{subp},  $y-\overline{\mu}_k^{-1}R_k(y)$ satisfies \eqref{inexact-cond1}-\eqref{inexact-cond2}.
		
 \item[(ii)] The line search step in \eqref{ls-NT} is well defined, and $\alpha_k \geq \min\big\{1,\frac{(1-\varrho)b_1\beta}{L_1}\|\overline{\mu}_k(x^k-\overline{x}^k)\|^{\sigma}\big\}$. 

 \item[(iii)] The inexact criterion \eqref{inexact-cond2} implies that $\|R_k(y^k)\| \leq \frac{1}{2}\min \left\{\|r_k(x^k)\|, \|r_k(x^k)\|^{1+\varsigma} \right\}$.
 \end{description}
 \end{lemma}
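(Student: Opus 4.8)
The plan is to prove part (iii) first, because the residual inequality it rests on is the workhorse for the whole lemma, and then to deduce (i) and (ii) from it. Throughout, write $q_k(x):=f(x^k)+\langle\nabla f(x^k),x-x^k\rangle+\tfrac12\langle x-x^k,G_k(x-x^k)\rangle$, so that $\Theta_k=q_k+\delta_{\Pi_k}$, $\nabla q_k(x)=\nabla f(x^k)+G_k(x-x^k)$, and $y-\overline{\mu}_k^{-1}R_k(y)={\rm proj}_{\Pi_k}(y-\overline{\mu}_k^{-1}\nabla q_k(y))$ is exactly one projected-gradient step on the subproblem \eqref{subp}. Since \eqref{con-Gk} forces $G_k\succ0$, $\Theta_k$ is strongly convex with a unique minimizer $y^*$, which is the fixed point of this projected-gradient map.

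For part (iii), I would establish the dimension-free bound $\|R_k(y)\|\le{\rm dist}(0,\partial\Theta_k(y))$ for every $y\in\Pi_k$. Write $y^+:={\rm proj}_{\Pi_k}(y-\overline{\mu}_k^{-1}\nabla q_k(y))$ and pick $\xi\in\partial\Theta_k(y)=\nabla q_k(y)+\mathcal{N}_{\Pi_k}(y)$ with $\|\xi\|={\rm dist}(0,\partial\Theta_k(y))$, decomposed as $\xi=\nabla q_k(y)+w$ with $w\in\mathcal{N}_{\Pi_k}(y)$. Testing the projection variational inequality at $z=y\in\Pi_k$ gives $\|y-y^+\|^2\le\overline{\mu}_k^{-1}\langle\nabla q_k(y),y-y^+\rangle$; substituting $\nabla q_k(y)=\xi-w$ and using the normal-cone inequality $\langle w,y^+-y\rangle\le0$ yields $\|y-y^+\|^2\le\overline{\mu}_k^{-1}\langle\xi,y-y^+\rangle\le\overline{\mu}_k^{-1}\|\xi\|\|y-y^+\|$, hence $\|R_k(y)\|=\overline{\mu}_k\|y-y^+\|\le\|\xi\|$. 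Applying this at $y=y^k$ and combining with \eqref{inexact-cond2}, the identity $\|r_k(x^k)\|=\|\overline{\mu}_k(x^k-\overline{x}^k)\|$ from \eqref{Eq-KKTprox}, and $\min\{\overline{\mu}_k^{-1},1\}\le1$, I obtain precisely $\|R_k(y^k)\|\le\tfrac12\min\{\|r_k(x^k)\|,\|r_k(x^k)\|^{1+\varsigma}\}$.

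For part (i), I would first check $y^*\neq x^k$: if $y^*=x^k$, optimality of $x^k$ for \eqref{subp} reads $-\nabla f(x^k)\in\mathcal{N}_{\Pi_k}(x^k)$, whence ${\rm proj}_{\Pi_k}(x^k-\overline{\mu}_k^{-1}\nabla f(x^k))=x^k$, i.e. $\overline{x}^k=x^k$ by Remark \ref{remark-hybrid}(iii), contradicting that the iterate passed step (1b) before entering the Newton step. Thus $\Theta_k(y^*)<\Theta_k(x^k)=f(x^k)$. Setting $\hat y:=y-\overline{\mu}_k^{-1}R_k(y)$, continuity of the projected-gradient map gives $\hat y\to y^*$ as $y\to y^*$, so $\Theta_k(\hat y)<\Theta_k(x^k)$ for $y$ near $y^*$, which is \eqref{inexact-cond1}. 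For \eqref{inexact-cond2}, projection optimality gives $(G_k-\overline{\mu}_k I)(\hat y-y)\in\partial\Theta_k(\hat y)$, so ${\rm dist}(0,\partial\Theta_k(\hat y))\le\|G_k-\overline{\mu}_k I\|_2\|\hat y-y\|\to0$, while the right-hand side of \eqref{inexact-cond2} is a fixed positive number; hence \eqref{inexact-cond2} also holds for $y$ close to $y^*$.

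Finally, part (ii) is a standard Armijo analysis. By \eqref{convex_subgrad}, $\langle\nabla f(x^k),d^k\rangle\le-\tfrac{b_1}{2}\|\overline{\mu}_k(x^k-\overline{x}^k)\|^{\sigma}\|d^k\|^2<0$, so $d^k$ is a genuine descent direction for $f$; that $d^k\neq0$ follows since, by the part (iii) bound at $x^k$, ${\rm dist}(0,\partial\Theta_k(x^k))\ge\|R_k(x^k)\|=\|r_k(x^k)\|$ strictly exceeds the threshold in \eqref{inexact-cond2}, so $y^k\neq x^k$. Well-definedness of \eqref{ls-NT} is then immediate, and using the descent lemma with the Lipschitz constant $L_1$ of \eqref{fact1}, the Armijo test passes whenever $\tfrac{L_1\alpha}{2}\|d^k\|^2\le(1-\varrho)(-\langle\nabla f(x^k),d^k\rangle)$, i.e. whenever $\alpha\le\frac{(1-\varrho)b_1}{L_1}\|\overline{\mu}_k(x^k-\overline{x}^k)\|^{\sigma}$ after invoking \eqref{convex_subgrad}; the backtracking structure then forces either $\alpha_k=1$ or $\alpha_k=\beta\cdot\beta^{t_k-1}>\beta\frac{(1-\varrho)b_1}{L_1}\|\overline{\mu}_k(x^k-\overline{x}^k)\|^{\sigma}$, which is the stated bound. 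The main obstacle is the residual inequality $\|R_k(y)\|\le{\rm dist}(0,\partial\Theta_k(y))$ of part (iii): extracting the right constant from the projection variational inequality, rather than a looser bound scaling with $\overline{\mu}_k$ or $\|G_k\|_2$, is what makes \eqref{inexact-cond2} translate cleanly into the required control on $\|R_k(y^k)\|$ and simultaneously certifies $d^k\neq0$ in (ii).
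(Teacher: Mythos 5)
Your proof is correct, and parts (i) and (ii) follow essentially the same route as the paper's: the same argument that the exact minimizer of \eqref{subp} differs from $x^k$ and strictly decreases $\Theta_k$, the same continuity argument for both inexactness criteria (your subgradient element $(G_k-\overline{\mu}_k I)(\hat y-y)$ is, up to sign, the paper's $\overline{\mu}_k^{-1}G_kR_k(y)+R_k(y)$), and the identical descent-lemma computation for the Armijo bound. The genuine difference is in part (iii). The paper first bounds ${\rm dist}(0,\partial\Theta_k(y^k))$ from below by the \emph{unit-step} projected-gradient residual, via the fixed-point characterization of the normal cone and nonexpansiveness of ${\rm proj}_{\Pi_k}$, and then invokes Lemma 4 of \cite{sra12} to compare residuals at step sizes $1$ and $\overline{\mu}_k^{-1}$, arriving at ${\rm dist}(0,\partial\Theta_k(y^k))\ge\min\{\overline{\mu}_k^{-1},1\}\,\|R_k(y^k)\|$; the factor $\min\{\overline{\mu}_k^{-1},1\}$ then cancels against the same factor built into \eqref{inexact-cond2}. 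You instead test the projection variational inequality at $z=y^k$ and use $\langle w,y^{+}-y^k\rangle\le 0$ for $w\in\mathcal{N}_{\Pi_k}(y^k)$ to obtain the sharper, self-contained inequality $\|R_k(y^k)\|\le{\rm dist}(0,\partial\Theta_k(y^k))$ with no step-size-dependent constant, after which $\min\{\overline{\mu}_k^{-1},1\}\le 1$ is simply discarded. Your route avoids the external comparison lemma and gives, as a free by-product, the observation that \eqref{inexact-cond2} forces $y^k\ne x^k$ (hence $d^k\ne 0$), a point the paper leaves implicit. Both arguments are valid and reach the same conclusion.
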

 \begin{proof}
 Pick any $x^k$ satisfying \eqref{switch-condition}. We proceed the proof of parts (i)-(iii) as follows.

 \noindent
 {\bf(i)} Let $\widehat{y}^k$ be the unique optimal solution to \eqref{subp}. Then $\widehat{y}^k \neq x^k$ (if not, $x^k$ is the optimal solution of \eqref{subp} and $0=R_k(x^k)=r_k(x^k)$, which by \eqref{Eq-KKTprox} means that $x^k = \overline{x}^k$ and Algorithm \ref{hybrid} stops at $x^k$). By the optimality condition of \eqref{subp}, $-\nabla \!f(x^k)-G_k(\widehat{y}^k-x^k)\in\mathcal{N}_{\Pi_k}(\widehat{y}^k)$, which by the convexity of $\Pi_k$ and $x^k\in\Pi_k$ implies that $\langle\nabla \!f(x^k)+G_k(\widehat{y}^k-x^k),\widehat{y}^k-x^k\rangle\le 0$. Along with the expression of $\Theta_k$, we have $\Theta_k(\widehat{y}^k)-\Theta_k(x^k) \leq -\frac{1}{2}\langle \widehat{y}^k-x^k,G_k(\widehat{y}^k-x^k)\rangle<0$. Since $\Theta_k$ is continuous relative to $\Pi_k$, for any $z\in\Pi_k$ sufficiently close to $\widehat{y}^k$, $\Theta_k(z)-\Theta_k(x^k) \leq 0$. From $R_k(\widehat{y}) = 0$ and the continuity of $R_k$, $y-\overline{\mu}_k^{-1}R_k(y)$ is close to $\widehat{y}$ when $y$ sufficiently close to $\widehat{y}$, which together with $y-\overline{\mu}_k^{-1}R_k(y)\in \Pi_k$ implies that $y-\overline{\mu}_k^{-1}R_k(y)$ satisfies the criterion \eqref{inexact-cond1} when $y$ is sufficiently close to $\widehat{y}$. In addition, from the expression of $R_k$, for any $y\in\mathbb{R}^n$, 
 \[
   0\in G_k(y-x^k)+\nabla\!f(x^k)-R_k(y)+\mathcal{N}_{\Pi_k}(y-\overline{\mu}_k^{-1}R_k(y)),  
 \]
 which by the expression of $\Theta_k$ implies that 
 $\overline{\mu}_k^{-1}G_kR_k(y)+R_k(y)\in\partial\Theta_k(y-\overline{\mu}_k^{-1}R_k(y))$. Hence, ${\rm dist}(0,\partial\Theta_k(y-\overline{\mu}_k^{-1}R_k(y)))\le \|\overline{\mu}_k^{-1}G_kR_k(y)+R_k(y)\|$. Noting that $R_k(\widehat{y}^k)=0$, we have $\|\overline{\mu}_k^{-1}G_kR_k(\widehat{y}^k)+R_k(\widehat{y}^k)\|= 0<\frac{\min \{\overline{\mu}_k^{-1}, 1\}}{2}\min \left\{\|\overline{\mu}_k(x^k\!-\! \overline{x}^k)\|,\|\overline{\mu}_k(x^k\!-\overline{x}^k)\|^{1+\varsigma}\right\}$. From the continuity of the function $y\mapsto \|\overline{\mu}_k^{-1}G_kR_k(y)+R_k(y)\|$, 
 we conclude that for any $y$ sufficiently close to $\widehat{y}^k$, 
 $y-\overline{\mu}_k^{-1}R_k(y)$ satisfies the inexact criterion \eqref{inexact-cond2}. 

 \noindent
 {\bf (ii)} By \eqref{fact1} and the descent lemma \cite[Proposition A.24]{NP97}, for any $\alpha\in(0,1]$, 
 \begin{align*}
  f(x^k\!+\!\alpha d^k) - f(x^k) -\varrho \alpha \langle \nabla \!f(x^k),  d^k\rangle 
  &\le (1\!-\!\varrho) \alpha \langle \nabla \!f(x^k), d^k\rangle + \frac{L_1\alpha^2}{2} \|d^k\|^2 \\
  &\le-\frac{(1\!-\!\varrho) \alpha b_1}{2} \|\overline{\mu}_k(x^k \!- \!\overline{x}^k)\|^{\sigma}\|d^k\|^2\!+\! \frac{L_1\alpha^2}{2} \|d^k\|^2 \\
  &= \Big(-\frac{(1\!-\!\varrho) b_1}{2} \|\overline{\mu}_k(x^k\! -\! \overline{x}^k)\|^{\sigma}\!+\!\frac{L_1\alpha}{2} \Big)\alpha \|d^k\|^2,
 \end{align*} 
 where the second inequality uses \eqref{convex_subgrad}. Therefore, when $\alpha \leq \min\big\{1,\frac{(1-\varrho)b_1}{L_1}\|\overline{\mu}_k(x^k\!-\! \overline{x}^k)\|^{\sigma}\big\}$, the line search in \eqref{ls-NT} holds, which implies that $\alpha_k\ge\min\big\{1,\frac{(1-\varrho)b_1\beta}{L_1}\|\overline{\mu}_k(x^k\! -\! \overline{x}^k)\|^{\sigma}\big\}$.

 \noindent
 {\bf(iii)} Let $\zeta^k\in\partial \Theta_k(y^k)$ be such that $\|\zeta^k\|={\rm dist}(0, \partial \Theta_k(y^k))$. From $\zeta^k\in\partial \Theta_k(y^k)$ and the expression of  $\Theta_k$, we have $y^k={\rm proj}_{\Pi_k}(y^k+\zeta^k-(G_k(y^k-x^k)+\nabla f(x^k)))$. Along with the nonexpansiveness of ${\rm proj}_{\Pi_k}$, $\|y^k\!-\!{\rm proj}_{\Pi_k}(y^k\!-\!(G_k(y^k\!-\!x^k) + \nabla f(x^k)))\|\le\|\zeta^k\|$. Consequently,
 \begin{align*}
 {\rm dist}(0,\partial\Theta_k(y^k)) 
 &\ge\|y^k\!-\!{\rm proj}_{\Pi_k}(y^k\!-\!(G_k(y^k\!-\!x^k) + \nabla f(x^k)))\| 
  \ge\min \{\overline{\mu}_k^{-1}, 1\} \|R_k(y^k)\|,
 \end{align*}
 where the second inequality follows by Lemma 4 of \cite{sra12} and the expression of $R_k$. Combining the last inequality with \eqref{inexact-cond2} and \eqref{Eq-KKTprox} leads to the desired inequality.
\end{proof}

 When $\overline{\mu}_k=1$, the condition that $\|R_k(y^k)\|\le \frac{1}{2}\min\left\{\|r_k(x^k)\|,\|r_k(x^k)\|^{1+\varsigma} \right\}$ is a special case of the first inexact condition in \cite[Equa (6a)]{Yue19} or the inexact condition in \cite[Equa (14)]{Mordu23}, which by Lemma \ref{well-defined} (iii) shows that criterion \eqref{inexact-cond2} with $\overline{\mu}_k=1$ is stronger than those ones.
 
 To analyze the convergence of Algorithm \ref{hybrid} with $\epsilon=0$, henceforth we assume $x^k \neq \overline{x}^k$ for all $k$ (if not, Algorithm \ref{hybrid} will produce an $L$-stationary point within finite number of steps, and its convergence holds automatically). From the iterate steps of Algorithm \ref{hybrid}, we see that the sequence $\{x^k\}_{k\in\mathbb{N}}$ consists of two parts, $\{x^k\}_{k\in\mathcal{K}_1}$ and $\{x^k\}_{k\in\mathcal{K}_2}$, where 
 \[  \mathcal{K}_1\!:=\!\mathbb{N}\backslash\mathcal{K}_2\ \ {\rm with}\ \ \mathcal{K}_2\!:=\!\big\{k\in\mathbb{N}\ \lvert \ {\rm supp}(Bx^k)\!={\rm supp}(B\overline{x}^k),\ {\rm supp}(x^k)\!= {\rm supp}(\overline{x}^k) \big\}.
 \]
 Obviously, $\mathcal{K}_1$ consists of those $k$'s with $x^{k+1}$ from the PG step, while $\mathcal{K}_2$ consists of those $k$'s with $x^{k+1}$ from the Newton step. 

 To close this section, we provide some properties of the sequences $\{x^k\}_{k\in\mathbb{N}}$ and $\{\overline{x}^k\}_{k\in\mathbb{N}}$.
 \begin{proposition}\label{xk-prop}
  The following assertions are true.
  \begin{description}
  \item[(i)] The sequence $\{F(x^k)\}_{k\in\mathbb{N}}$ is descent and convergent.
 
  \item[(ii)] There exists $\nu > 0$ such that $\lvert B\overline{x}^k\rvert_{\min} \geq \nu $ and $\lvert \overline{x}^k\rvert_{\min} \geq \nu$ for all $k\in\mathbb{N}$.
		
  \item[(iii)]  There exist $c_1,c_2 > 0$ such that $c_1 \|r_k(x^k)\| \leq \|d^k\|\leq c_2 \|r_k(x^k)\|^{1-\sigma}$ for all $k\in\mathcal{K}_2$.
 \end{description}
 \end{proposition}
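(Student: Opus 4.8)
The plan is to handle the three parts in order, each leaning on results already established in the excerpt.

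\textbf{Part (i).} I would split $\mathbb{N}=\mathcal{K}_1\cup\mathcal{K}_2$. For $k\in\mathcal{K}_1$ the update $x^{k+1}=\overline{x}^k$ and the line-search rule \eqref{ls-PG} give directly $F(x^{k+1})\le F(x^k)-\frac{\alpha}{2}\|x^k-\overline{x}^k\|^2$. For $k\in\mathcal{K}_2$ I would combine the Armijo rule \eqref{ls-NT} with the two facts from Remark \ref{remark-hybrid}(ii): the descent inequality $\langle\nabla f(x^k),d^k\rangle<0$ of \eqref{convex_subgrad} and $g(x^k+\alpha d^k)\le g(x^k)$ for $\alpha\in(0,1]$; together they yield $F(x^{k+1})=f(x^k+\alpha_kd^k)+g(x^{k+1})\le F(x^k)+\varrho\alpha_k\langle\nabla f(x^k),d^k\rangle\le F(x^k)$. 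Thus $\{F(x^k)\}$ is nonincreasing, and since $f$ is continuous on the compact set $\Omega$ and $g\ge 0$ there, $F$ is bounded below, so $\{F(x^k)\}$ converges.

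\textbf{Part (ii).} This is a direct application of Lemma \ref{lemma-lb}. I would first assemble its hypotheses: $\{x^k\}\subset\Omega$ is bounded, $\nabla f$ is bounded on $\Omega$, and by Remark \ref{remark-hybrid}(iv) one has $\overline{\mu}_k\in[\mu_{\min},\widetilde{\mu})$; hence the points $z^k:=x^k-\overline{\mu}_k^{-1}\nabla f(x^k)$ lie in a compact set $\Xi$ and $\overline{\mu}_k$ in the closed interval $[\mu_{\min},\widetilde{\mu}]$. Applying Lemma \ref{lemma-lb} gives $\nu>0$ with $|[B;I]u|_{\min}\ge\nu$ for every nonzero $u$ in the associated set $\mathcal{Z}$. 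Since $\overline{x}^k\in{\rm prox}_{\overline{\mu}_k^{-1}g}(z^k)\subset\mathcal{Z}$ and $[B;I]\overline{x}^k=[B\overline{x}^k;\overline{x}^k]$ gives $|[B;I]\overline{x}^k|_{\min}=\min\{|B\overline{x}^k|_{\min},|\overline{x}^k|_{\min}\}$, the conclusion follows at once, with the degenerate case $\overline{x}^k=0$ being trivial.

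\textbf{Part (iii), lower bound.} Throughout I would use the uniform bounds $\mu_{\min}\le\overline{\mu}_k\le\widetilde{\mu}$ and $\sup_k\|G_k\|_2<\infty$ (the latter from continuity of $\nabla^2 f$ on $\Omega$, boundedness of the correction terms, and boundedness of $\|\overline{\mu}_k(x^k-\overline{x}^k)\|^{\sigma}$), together with $R_k(x^k)=r_k(x^k)=\overline{\mu}_k(x^k-\overline{x}^k)$ from \eqref{Eq-KKTprox}. Using nonexpansiveness of ${\rm proj}_{\Pi_k}$, the map $R_k$ is Lipschitz with a uniform modulus $\ell:=\sup_k(\overline{\mu}_k+\|\overline{\mu}_kI-G_k\|_2)<\infty$. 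Then $R_k(x^k)=r_k(x^k)$, Lemma \ref{well-defined}(iii), and the triangle inequality give $\|r_k(x^k)\|\le\ell\|d^k\|+\frac{1}{2}\|r_k(x^k)\|$, which rearranges to $\|d^k\|\ge(2\ell)^{-1}\|r_k(x^k)\|$, so $c_1=(2\ell)^{-1}$.

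\textbf{Part (iii), upper bound (the main obstacle).} I would introduce the unique exact minimizer $\widehat{y}^k$ of \eqref{subp} and set $\widehat{d}^k:=\widehat{y}^k-x^k$. Writing the variational inequalities obeyed by $\overline{x}^k={\rm proj}_{\Pi_k}(x^k-\overline{\mu}_k^{-1}\nabla f(x^k))$ and by $\widehat{y}^k$, testing each against the other point and adding, I obtain $\langle r_k(x^k)+G_k\widehat{d}^k,\ \widehat{d}^k+\overline{\mu}_k^{-1}r_k(x^k)\rangle\le 0$; discarding the nonnegative term $\overline{\mu}_k^{-1}\|r_k(x^k)\|^2$ and applying Cauchy--Schwarz yields $\langle G_k\widehat{d}^k,\widehat{d}^k\rangle\le(1+\overline{\mu}_k^{-1}\|G_k\|_2)\|r_k(x^k)\|\,\|\widehat{d}^k\|$. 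Using $G_k\succeq b_1\|r_k(x^k)\|^{\sigma}I$ from \eqref{con-Gk} on the left and cancelling one factor $\|\widehat{d}^k\|$ gives $\|\widehat{d}^k\|\le b_1^{-1}(1+\mu_{\min}^{-1}\sup_k\|G_k\|_2)\|r_k(x^k)\|^{1-\sigma}$. To absorb the inexactness, I would note $\Theta_k$ is $\rho_k$-strongly convex with $\rho_k=b_1\|r_k(x^k)\|^{\sigma}$, so strong monotonicity of $\partial\Theta_k$ gives $\|y^k-\widehat{y}^k\|\le\rho_k^{-1}{\rm dist}(0,\partial\Theta_k(y^k))$, which by \eqref{inexact-cond2} (and $\|r_k(x^k)\|=\|\overline{\mu}_k(x^k-\overline{x}^k)\|$) is at most $\frac{1}{2b_1}\|r_k(x^k)\|^{1-\sigma}$. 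Combining via $\|d^k\|\le\|\widehat{d}^k\|+\|y^k-\widehat{y}^k\|$ then gives $\|d^k\|\le c_2\|r_k(x^k)\|^{1-\sigma}$ with uniform $c_2$. The comparison of the two variational inequalities and the careful bookkeeping of uniform constants are where the real work lies; everything else is assembling facts already in hand.
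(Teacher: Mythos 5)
Your proof is correct. Parts (i) and (ii) follow the paper's argument essentially verbatim (split $\mathbb{N}$ into $\mathcal{K}_1\cup\mathcal{K}_2$ and invoke \eqref{ls-NT} with \eqref{convex_subgrad} and the monotonicity of $g$ along $d^k$; then apply Lemma \ref{lemma-lb} to the compact set containing $\{x^k-\overline{\mu}_k^{-1}\nabla f(x^k)\}$), and your lower bound in (iii) is the paper's computation repackaged: the paper unfolds the two projections in $\|r_k(x^k)-R_k(y^k)\|$ and uses nonexpansiveness to get the factor $2\overline{\mu}_k+\|G_k\|_2$, which is exactly your Lipschitz modulus $\ell$ for $R_k$ combined with Lemma \ref{well-defined}(iii). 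Where you genuinely diverge is the upper bound in (iii). The paper works directly with the inexact point: it observes that $R_k(y^k)-\nabla f(x^k)-G_kd^k\in\mathcal{N}_{\Pi_k}(y^k-\overline{\mu}_k^{-1}R_k(y^k))$ and $r_k(x^k)-\nabla f(x^k)\in\mathcal{N}_{\Pi_k}(x^k-\overline{\mu}_k^{-1}r_k(x^k))$, applies monotonicity of $\mathcal{N}_{\Pi_k}$ to these two points, and obtains in one chain $b_1\|r_k(x^k)\|^{\sigma}\|d^k\|^2\le(3/2)(1+\overline{\mu}_k^{-1}\|G_k\|_2)\|r_k(x^k)\|\|d^k\|$, so no exact minimizer ever appears. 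You instead split $d^k$ into the exact step $\widehat{d}^k$ (bounded via the variational inequalities at $\overline{x}^k$ and $\widehat{y}^k$) plus the inexactness gap $\|y^k-\widehat{y}^k\|$ (bounded via $b_1\|r_k(x^k)\|^{\sigma}$-strong convexity of $\Theta_k$ and \eqref{inexact-cond2}). Both yield a uniform $c_2$ with the same exponent $1-\sigma$; the paper's route is shorter and avoids any appeal to existence/uniqueness of $\widehat{y}^k$, while yours isolates cleanly how much of $\|d^k\|$ is attributable to the inexact solve, which is conceptually transparent and reuses the strong-convexity constant already guaranteed by \eqref{con-Gk} and \eqref{Eq-KKTprox}. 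No gaps.
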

 \begin{proof}
 {\bf (i)} For each $k\in\mathbb{N}$, when $k\in\mathcal{K}_1$, by the line search in step (1a), $F(x^{k+1})<F(x^k)$, and when $k\in\mathcal{K}_2$, from  \eqref{ls-NT} and \eqref{convex_subgrad}, it follows that $f(x^{k+1})<f(x^k)$, which along with $g(x^{k+1})\le g(x^k)$ by Remark \ref{remark-hybrid} (ii) implies that $F(x^{k+1})<F(x^k)$. Hence, $\{F(x^k)\}_{k\in\mathbb{N}}$ is a descent sequence. Recall that $F$ is lower bounded on $\Omega$, so $\{F(x^k)\}_{k\in\mathbb{N}}$ is convergent.
	
 \noindent
 {\bf (ii)}
 By the definition of $\overline{\mu}_k$ and Remark \ref{remark-hybrid} (iv), $\overline{\mu}_k \in [\mu_{\min},\widetilde{\mu})$ for all $k \in \mathbb{N}$. Note that $\{x^k\}_{k\in\mathbb{N}}\subset \Omega$, so the sequence $\{x^k\!-\overline{\mu}_k^{-1}\nabla f(x^k)\}_{k\in\mathbb{N}}$ is bounded and is contained in a compact set, says, $\Xi$. By invoking Lemma \ref{lemma-lb} with such $\Xi$ and $\underline{\mu}=\mu_{\min},\overline{\mu}=\widetilde{\mu}$, there exists $\nu>0$ (depending on $\Xi,\ \mu_{\min}$ and $\widetilde{\mu}$) such that $\lvert [B; I]\overline{x}^k\rvert_{\min} > \nu$. The desired result then follows by noting that $\lvert B\overline{x}^k\rvert_{\min} \ge\lvert [B; I]\overline{x}^k\rvert_{\min}$ and $\lvert \overline{x}^k\rvert_{\min} \ge\lvert [B; I]\overline{x}^k\rvert_{\min}$.
	
  \noindent
  {\bf (iii)} 
  From the definition of $G_k$, the continuity of $\nabla^2\!f$, $\{x^k, \overline{x}^k\}_{k\in\mathbb{N}}\subset \Omega$ and Remark \ref{remark-hybrid} (iv),  there exists $\overline{c}>0$ such that
  \begin{equation}\label{Gk-bound}
      \|G_k\|_2 \leq \overline{c}\ {\rm for\ all}\ k\in\mathcal{K}_2.
  \end{equation}
 Fix any $k\in\mathcal{K}_2$. By Lemma \ref{well-defined} (iii), $\|R_k(y^k)\| \leq \frac{1}{2} \|r_k(x^k)\|$. Then, it holds that 
  \begin{align*}
   & \frac{1}{2} \|r_k(x^k)\| 
   \leq \|r_k(x^k)\| - \|R_k(y^k)\| \leq \|r_k(x^k) - R_k(y^k)\| \\
   & = \overline{\mu}_k\|x^k - {\rm proj}_{\Pi_k} (x^k - \overline{\mu}_k^{-1} \nabla f(x^k)) - y^k + {\rm proj}_{\Pi_k}(y^k - \overline{\mu}_k^{-1}(G_k (y^k-x^k) + \nabla\! f(x^k)))\| \\
  & \leq (2\overline{\mu}_k+\|G_k\|_2)\|y^k - x^k\|  \leq (2\widetilde{\mu}+\overline{c})\|d^k\|,
  \end{align*}
  where the third inequality is using the nonexpansiveness of ${\rm proj}_{\Pi_k}$, and the last one is due to \eqref{Gk-bound} and $d^k=y^k-x^k$. Therefore, $c_1 \|r_k(x^k)\| \leq \|d^k\|$ with $c_1:=1/(4\widetilde{\mu}\!+2\overline{c})$. 
  For the second inequality, it follows from the definitions of $r_k(\cdot)$ and $R_k(\cdot)$ that $R_k(y^k)-\nabla f(x^k)-G_kd^k \in \mathcal{N}_{\Pi_k}(y^k-\overline{\mu}_k^{-1}R_k(y^k))$ and $r_k(x^k) - \nabla f(x^k) \in \mathcal{N}_{\Pi_k}(x^k - \overline{\mu}_k^{-1}r_k(x^k))$, which together with the monotonicity of the set-valued mapping $\mathcal{N}_{\Pi_k}(\cdot)$ implies that
  \begin{align*}
  \langle d^k,G_kd^k\rangle
   &\le \langle R_k(y^k)\!-\!r_k(x^k),d^k\rangle-\overline{\mu}_k^{-1}\|R_k(y^k)\!-\!r_k(x^k)\|^2 -\overline{\mu}_k^{-1}\langle G_kd^k,-R_k(y^k)+r_k(x^k)\rangle \nonumber\\
   &\le \langle(I+\overline{\mu}_k^{-1}G_k)d^k,R_k(y^k)-r_k(x^k)\rangle.
  \end{align*}
  Combining this inequality with equations \eqref{con-Gk}, \eqref{Eq-KKTprox}  and Lemma \ref{well-defined} (iii) leads to 
  \begin{align} \label{Gkub}  
   b_1\|r_k(x^k)\|^{\sigma}\|d^k\|^2
   &\le(1+\overline{\mu}_k^{-1}\|G_k\|_2)(\|R_k(y^k)\|+\|r_k(x^k)\|)\|d^k\|\\   
   &\le(3/2)(1+\overline{\mu}_k^{-1}\|G_k\|_2)\|r_k(x^k)\|\|d^k\|, \nonumber
  \end{align}
  which along with \eqref{Gk-bound} and $\mu_k \geq \mu_{\min}$ implies that $\|d^k\| \leq \frac{3}{2}(1+\mu_{\min}^{-1}\overline{c})b_1^{-1}\|r_k(x^k)\|^{1-\sigma}.$
	Then, $\|d^k\|\leq c_2 \|r_k(x^k)\|^{1-\sigma}$ holds with $c_2 := \frac{3}{2}(1+\mu_{\min}^{-1}\overline{c})b_1^{-1}$. The proof is completed.
 \end{proof}
\section{Convergence Analysis}\label{sec5}

 Before analyzing the convergence of Algorithm \ref{hybrid}, we show that Algorithm \ref{hybrid} finally reduces to an inexact projected regularized Newton method for seeking a stationary point of a problem to minimize a smooth function over a polyhedral set. This requires the following lemma. 
\begin{lemma}\label{lemma-Fxk}
For the sequences $\{x^k\}_{k\in\mathbb{N}}$ and $\{\overline{x}^k\}_{k\in\mathbb{N}}$ generated by Algorithm \ref{hybrid}, the following assertions are true.
 \begin{description}
  \item[(i)] There exists a constant $\gamma>0$ such that for each $k\in\mathbb{N}$,  
		\begin{equation}\label{sufficient-descent}
			F(x^{k+1}) - F(x^k) 
			\le\left\{ \begin{array}{ll}
				-\gamma\|x^k-\overline{x}^k\|^2 &{\rm if}\ k\in\mathcal{K}_1, \\
				-\gamma\|x^k-\overline{x}^k\|^{2+\sigma}&{\rm if}\ k\in\mathcal{K}_{2}, \ \alpha_k = 1, \\
                -\gamma\|x^k-\overline{x}^k\|^{2+2\sigma}&{\rm if}\ k\in\mathcal{K}_{2}, \ \alpha_k \neq 1.
			\end{array}\right.
		\end{equation}
		
 \item[(ii)] $\lim_{k\rightarrow \infty} \|x^k - \overline{x}^k\|= 0$ and $\lim_{\mathcal{K}_2 \ni k\rightarrow \infty} \|d^k\| = 0$.
		
  \item[(iii)] The accumulation point set of $\{x^k\}_{k\in\mathbb{N}}$, denoted by $\Gamma(x^0)$, is nonempty and compact, and every element of $\Gamma(x^0)$ is an $L$-stationary point of problem \eqref{model}.
 \end{description}
 \end{lemma}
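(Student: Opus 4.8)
The plan is to handle the three parts in order, leaning on Proposition \ref{xk-prop} and Lemma \ref{well-defined}.

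For part (i), I would split on whether $k\in\mathcal{K}_1$ or $k\in\mathcal{K}_2$. When $k\in\mathcal{K}_1$ the iterate comes from the PG step with $x^{k+1}=\overline{x}^k$, so the line-search inequality \eqref{ls-PG} gives $F(x^{k+1})-F(x^k)\le-\frac{\alpha}{2}\|x^k-\overline{x}^k\|^2$ directly. When $k\in\mathcal{K}_2$ the Newton line search \eqref{ls-NT} together with $g(x^{k+1})\le g(x^k)$ from Remark \ref{remark-hybrid} (ii) yields $F(x^{k+1})-F(x^k)\le\varrho\alpha_k\langle\nabla f(x^k),d^k\rangle$, and \eqref{convex_subgrad} bounds the inner product by $-\frac{b_1}{2}\|\overline{\mu}_k(x^k-\overline{x}^k)\|^{\sigma}\|d^k\|^2$. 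The remaining work is to trade $\|d^k\|$ for $\|x^k-\overline{x}^k\|$: the lower estimate $\|d^k\|\ge c_1\|r_k(x^k)\|$ of Proposition \ref{xk-prop} (iii), the identity $r_k(x^k)=\overline{\mu}_k(x^k-\overline{x}^k)$ from \eqref{Eq-KKTprox}, and $\overline{\mu}_k\ge\mu_{\min}$ give $\|d^k\|^2$ a lower bound proportional to $\|x^k-\overline{x}^k\|^2$. When $\alpha_k=1$ this produces the exponent $2+\sigma$; when $\alpha_k\ne1$ the bound $\alpha_k\ge\frac{(1-\varrho)b_1\beta}{L_1}\|\overline{\mu}_k(x^k-\overline{x}^k)\|^{\sigma}$ from Lemma \ref{well-defined} (ii) contributes an extra factor $\|x^k-\overline{x}^k\|^{\sigma}$, giving the exponent $2+2\sigma$. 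Taking $\gamma$ to be the smallest of the three resulting constants completes part (i).

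For part (ii), since $\{F(x^k)\}_{k\in\mathbb{N}}$ is convergent by Proposition \ref{xk-prop} (i), summing the descent inequalities of part (i) forces $F(x^k)-F(x^{k+1})\to0$, so each of the three right-hand sides vanishes in the limit and hence $\|x^k-\overline{x}^k\|\to0$ over all $k\in\mathbb{N}$. The second limit then follows from the upper estimate $\|d^k\|\le c_2\|r_k(x^k)\|^{1-\sigma}=c_2(\overline{\mu}_k\|x^k-\overline{x}^k\|)^{1-\sigma}$ of Proposition \ref{xk-prop} (iii), using $\overline{\mu}_k<\widetilde{\mu}$ and $\|x^k-\overline{x}^k\|\to0$.

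For part (iii), nonemptiness and compactness of $\Gamma(x^0)$ are immediate: $\{x^k\}\subset\Omega$ is bounded, the accumulation-point set of a bounded sequence is closed, and $\Gamma(x^0)$ lies in the compact set $\Omega$. The crux is that every $x^*\in\Gamma(x^0)$ is $L$-stationary. Fix a subsequence $x^{k_j}\to x^*$; by part (ii) also $\overline{x}^{k_j}\to x^*$, and after a further extraction $\overline{\mu}_{k_j}\to\mu^*\in[\mu_{\min},\widetilde{\mu}]$, so $u^{k_j}:=x^{k_j}-\overline{\mu}_{k_j}^{-1}\nabla f(x^{k_j})\to u^*:=x^*-(\mu^*)^{-1}\nabla f(x^*)$. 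The main obstacle is that $g$ is merely lower semicontinuous, so one cannot pass to the limit directly in the defining inequality of $\overline{x}^{k_j}\in{\rm prox}_{\overline{\mu}_{k_j}^{-1}g}(u^{k_j})$. I would resolve this via the uniform lower bound of Proposition \ref{xk-prop} (ii): since $\lvert\overline{x}^k\rvert_{\min}\ge\nu$ and $\lvert B\overline{x}^k\rvert_{\min}\ge\nu$, any coordinate of $\overline{x}^{k_j}$ (resp.\ of $B\overline{x}^{k_j}$) tending to $0$ must actually vanish for large $j$, forcing ${\rm supp}(\overline{x}^{k_j})={\rm supp}(x^*)$ and ${\rm supp}(B\overline{x}^{k_j})={\rm supp}(Bx^*)$ eventually; since $x^*\in\Omega$ by closedness of $\Omega$, this gives $g(\overline{x}^{k_j})=g(x^*)$ for all large $j$. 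With the $g$-values stabilized, letting $j\to\infty$ in $\frac{\overline{\mu}_{k_j}}{2}\|\overline{x}^{k_j}-u^{k_j}\|^2+g(\overline{x}^{k_j})\le\frac{\overline{\mu}_{k_j}}{2}\|x-u^{k_j}\|^2+g(x)$ yields $\frac{\mu^*}{2}\|x^*-u^*\|^2+g(x^*)\le\frac{\mu^*}{2}\|x-u^*\|^2+g(x)$ for every $x$, which is precisely $x^*\in{\rm prox}_{(\mu^*)^{-1}g}(x^*-(\mu^*)^{-1}\nabla f(x^*))$, i.e.\ the $L$-stationarity of \eqref{model}.
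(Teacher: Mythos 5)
Your parts (i) and (ii) follow the paper's proof essentially verbatim: the same split over $\mathcal{K}_1$ versus $\mathcal{K}_2$, the same chain through \eqref{ls-NT}, \eqref{convex_subgrad}, Proposition \ref{xk-prop}~(iii) and \eqref{Eq-KKTprox}, the same use of Lemma \ref{well-defined}~(ii) to absorb $\alpha_k$ when $\alpha_k\neq 1$, and the same telescoping/convergence argument for (ii). Where you genuinely diverge is the $L$-stationarity claim in part (iii). The paper handles the lower semicontinuity obstruction abstractly: it packages the proximal subproblem into a parametric function $h(z,x,\mu)$, invokes \cite[Theorem 1.25]{RW09} and \cite[Example 5.22]{RW09} to get outer semicontinuity of the argmin multifunction $\mathcal{P}(x,\mu)$ on $\mathbb{R}^n\times[\mu_{\min},\widetilde{\mu}]$, and reads off $x^*\in\mathcal{P}(x^*,\overline{\mu}_*)$. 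You instead exploit the specific structure of $g$: the uniform lower bound $\lvert\overline{x}^k\rvert_{\min},\lvert B\overline{x}^k\rvert_{\min}\ge\nu$ from Proposition \ref{xk-prop}~(ii) forces the supports of $\overline{x}^{k_j}$ and $B\overline{x}^{k_j}$ to coincide with those of $x^*$ and $Bx^*$ for large $j$, so $g(\overline{x}^{k_j})=g(x^*)$ eventually and you can pass to the limit in the defining inequality of the proximal mapping by hand. Both arguments are correct. Yours is more elementary and self-contained (no variational-analysis machinery), but it leans on the lower-bound property that is special to this $g$; the paper's route is structure-agnostic and would survive replacing $g$ by any proper lsc, suitably level-bounded regularizer. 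A minor aside: in (ii) the convergence of $\{F(x^k)\}$ already gives $F(x^k)-F(x^{k+1})\to 0$ without summing, so your phrasing is slightly redundant but harmless.
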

 \begin{proof}
 {\bf (i)} Fix any $k\in \mathcal{K}_2$. From inequalities \eqref{ls-NT} and \eqref{convex_subgrad}, 
 \begin{equation}\label{descent-prop}
 \begin{aligned}
  f(x^{k+1})-f(x^k) 
  &\le -\frac{\varrho b_1 \alpha_k}{2}\|\overline{\mu}_k(x^k\!-\! \overline{x}^k)\|^{\sigma}\|d^k\|^2 
   \le -\frac{\varrho c_1^2b_1\alpha_k}{2}\|\overline{\mu}_k(x^k\!-\! \overline{x}^k)\|^{2+\sigma} \\
  & \leq -\frac{\varrho c_1^2b_1\alpha_k\mu_{\min}^{2+\sigma}}{2}\|x^k\!-\!\overline{x}^k\|^{2+\sigma},
 \end{aligned}
 \end{equation}
 where the second inequality is using Proposition \ref{xk-prop} (iii) and equality \eqref{Eq-KKTprox}. 
 By Remark \ref{remark-hybrid} (ii), we have $g(x^{k+1})\leq g(x^k)$, so that $F(x^{k+1}) - F(x^k) \leq f(x^{k+1}) - f(x^k)$, which along with the last equation yields that $F(x^{k+1})\!-F(x^k) \leq -\frac{\varrho c_1^2b_1\alpha_k\mu_{\min}^{2+\sigma}}{2}\|x^k-\overline{x}^k\|^{2+\sigma}$. Take $\gamma\!:= \min\big\{\frac{\alpha}{2},\frac{\varrho c_1^2b_1\mu_{\min}^{2+\sigma}}{2},\frac{\beta(1-\varrho)\varrho c_1^2b_1^2\mu_{\min}^{2+2\sigma}}{2L_1}\big\}$. The desired result then follows by using Lemma \ref{well-defined} (ii) and recalling that $F(x^{k+1}) - F(x^k) \leq \frac{\alpha}{2}\|x^k-\overline{x}^k\|^2$ for $k\in\mathcal{K}_1$. 

 \noindent
 {\bf(ii)} Let $\widetilde{\mathcal{K}}_2 := \{ k\in\mathcal{K}_2 \ | \ \alpha_k = 1 \}$. Doing summation for inequality \eqref{sufficient-descent} from $i=1$ to any $j\in\mathbb{N}$ yields that
 \begin{align*}
   \sum_{i\in \mathcal{K}_1\cap [j] } \gamma\|x^i-\overline{x}^{i}\|^2 & + \sum_{i\in \widetilde{\mathcal{K}}_2\cap[j]} \gamma\|x^i - \overline{x}^{i}\|^{2+\sigma}
  +\sum_{i\in (\mathcal{K}_2\backslash \widetilde{\mathcal{K}}_2)\cap[j]} \gamma\|x^i - \overline{x}^{i}\|^{2+2\sigma} \\
  & \le \sum_{i=1}^j\big[F(x^i)-F(x^{i+1})\big]= F(x^1)-F(x^{j+1}),  
 \end{align*}
  which by the lower boundedness of $F$ on the set $\Omega$ implies that
  \[
    \sum_{i\in \mathcal{K}_1 } \|x^i-\overline{x}^{i}\|^2 +  \sum_{i\in \widetilde{\mathcal{K}}_2} \gamma\|x^i - \overline{x}^{i}\|^{2+\sigma}
  +\sum_{i\in \mathcal{K}_2\backslash \widetilde{\mathcal{K}}_2} \gamma\|x^i - \overline{x}^{i}\|^{2+2\sigma} < \infty. 
  \]
  Thus, we obtain $\lim_{k\rightarrow \infty} \|x^k-\overline{x}^k\|= 0$. 
  Together with Proposition \ref{xk-prop} (iii), \eqref{Eq-KKTprox} and Remark \ref{remark-hybrid} (iv), it follows that $\lim_{\mathcal{K}_2 \ni k\rightarrow \infty} \|d^k\| = 0$. 
	
  \noindent
  {\bf(iii)} Recall that $\{x^k\}_{k\in\mathbb{N}}\subset\Omega$, so its accumulation point set $\Gamma(x^0)$ is nonempty. Pick any $x^*\in\Gamma(x^0)$. Then, there exists an index set $K\subset\mathbb{N}$ such that $\lim_{K\ni k\to\infty}x^{k}=x^*$. From part (ii), $\lim_{K\ni k\to\infty}\overline{x}^{k}=x^*$. For each $k\in K$, $\overline{x}^{k} \in {\rm prox}_{\overline{\mu}_{k}^{-1} g}\big(x^{k}-\overline{\mu}_{k}^{-1}\nabla f(x^{k})\big)$ with $\overline{\mu}_{k}\in[\mu_{\rm min},\widetilde{\mu})$ by step (1a) and Remark \ref{remark-hybrid} (iv). We assume that $\lim_{K\ni k\to\infty}\overline{\mu}_{k}\to\overline{\mu}_*\in[\mu_{\rm min},\widetilde{\mu}]$ (if necessary by taking a subsequence). Define the function
  \[
    h(z,x,\mu)\!:=\!\left\{\begin{array}{cl}
   \frac{\mu}{2}\big\|z-(x\!-\!\mu^{-1}\nabla\!f(x))\big\|^2+g(z) &{\rm if}\ (z,x,\mu)\in\mathbb{R}^n\times\mathbb{R}^n\times[\mu_{\rm min},\widetilde{\mu}],\\
    \infty &{\rm otherwise},
    \end{array}\right.
  \]
  and write $\mathcal{P}(x,\mu):=\mathop{\arg\min}_{z\in\mathbb{R}^{n}}h(z,x,\mu)$ for $(x,\mu)\in\mathbb{R}^n\times\mathbb{R}$. Note that $h\!:\mathbb{R}^n\times\mathbb{R}^n\times\mathbb{R}\to\overline{\mathbb{R}}$ is a proper and lower semicontinuous function and is level-bounded in $z$ locally uniformly in $(x,\mu)$. In addition, by \cite[Theorem 1.25]{RW09}, the function $\widehat{h}(x,\mu):=\inf_{z\in\mathbb{R}^{n}}h(z,x,\mu)$ is finite and continuous on $\mathbb{R}\times[\mu_{\rm min},\widetilde{\mu}]$. From \cite[Example 5.22]{RW09}, the multifunction $\mathcal{P}\!:\mathbb{R}^n\times\mathbb{R}\rightrightarrows \mathbb{R}^n$ is outer semicontinuous relative to $\mathbb{R}^n\times[\mu_{\rm min},\widetilde{\mu}]$. Note that $\overline{x}^k\in \mathcal{P}(x^k,\overline{\mu}_k)$ for each $k\in K$. Then, $x^*\in{\rm prox}_{\overline{\mu}_*^{-1}g}\big(x^*\!-\!\overline{\mu}_{*}^{-1}\nabla f(x^*)\big)$, so $x^*$ is an $L$-stationary point of \eqref{model}.
 \end{proof}

 Next we use Lemma \ref{lemma-Fxk} (ii) to show that, after a finite number of iterations, the switch condition in \eqref{switch-condition} always holds and the Newton step is executed. 
 To this end, define
\begin{equation}\label{Tk-Sk}
 T_k\!:={\rm supp}(Bx^k),\ \overline{T}_k\!:= {\rm supp}(B\overline{x}^k),\ S_k\!:= {\rm supp}(x^k)\ \ {\rm and}\ \ \overline{S}_k\!:={\rm supp}(\overline{x}^k).   
\end{equation}
\begin{proposition}\label{prop-supp}
 For the index sets defined in \eqref{Tk-Sk}, there exist index sets $T\subset [p],S\subset [n]$ and an index $\overline{k}\in\mathbb{N}$ such that for all $k>\overline{k}$, $T_k = \overline{T}_k = T$ and $S_k = \overline{S}_k = S$, which means that $k\in \mathcal{K}_2$ for all $k>\overline{k}$. Moreover, for each $x^* \in \Gamma(x^0)$,  ${\rm supp}(Bx^*) = T$ and ${\rm supp}(x^*) = S$, where $\Gamma(x^0)$ is defined in Lemma \ref{lemma-Fxk} (iii).
\end{proposition}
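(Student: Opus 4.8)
The plan is to control the integer-valued quantity $|T_k|+|S_k|$ and to show it is eventually non-increasing, dropping strictly exactly at PG (i.e. $\mathcal{K}_1$) steps, so that only finitely many $\mathcal{K}_1$ steps can occur. First I would establish the one-sided inclusions $\overline{T}_k\subset T_k$ and $\overline{S}_k\subset S_k$ for all large $k$. This combines two ingredients: the uniform lower bounds $\lvert B\overline{x}^k\rvert_{\min}\ge\nu$ and $\lvert\overline{x}^k\rvert_{\min}\ge\nu$ from Proposition \ref{xk-prop} (ii), and $\|x^k-\overline{x}^k\|\to 0$ from Lemma \ref{lemma-Fxk} (ii). Indeed, for $i\in\overline{T}_k$ we have $\lvert(B\overline{x}^k)_i\rvert\ge\nu$, while $\lvert(Bx^k)_i-(B\overline{x}^k)_i\rvert\le\|B\|_2\|x^k-\overline{x}^k\|<\nu$ for large $k$, whence $(Bx^k)_i\neq 0$; this gives $\overline{T}_k\subset T_k$, and the same reasoning gives $\overline{S}_k\subset S_k$.

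Next I would analyze one iteration. If $k\in\mathcal{K}_1$, then $x^{k+1}=\overline{x}^k$, so $T_{k+1}=\overline{T}_k$ and $S_{k+1}=\overline{S}_k$; moreover $k\in\mathcal{K}_1$ means the switch condition \eqref{switch-condition} fails, which together with $\overline{T}_k\subset T_k$ and $\overline{S}_k\subset S_k$ forces at least one strict inclusion, hence $|T_{k+1}|+|S_{k+1}|<|T_k|+|S_k|$. If $k\in\mathcal{K}_2$, then $x^{k+1}=x^k+\alpha_k d^k\in\Pi_k$ by Remark \ref{remark-hybrid} (ii), so $T_{k+1}\subset T_k$ and $S_{k+1}\subset S_k$, giving $|T_{k+1}|+|S_{k+1}|\le|T_k|+|S_k|$.

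Combining these, beyond the threshold from the first step the nonnegative integer sequence $\{|T_k|+|S_k|\}$ is non-increasing and therefore stabilizes at some index $\overline{k}$; since a $\mathcal{K}_1$ step would force a strict drop, no such step occurs for $k\ge\overline{k}$, i.e. $k\in\mathcal{K}_2$ for all $k\ge\overline{k}$. On this range the sum is constant while $T_{k+1}\subset T_k$ and $S_{k+1}\subset S_k$, and a sum of two nonnegative integers that stays constant under componentwise $\le$ forces equality in each component, so $T_{k+1}=T_k$ and $S_{k+1}=S_k$; writing $T:=T_{\overline{k}}$ and $S:=S_{\overline{k}}$, the switch condition valid for $k\in\mathcal{K}_2$ additionally yields $\overline{T}_k=T_k=T$ and $\overline{S}_k=S_k=S$. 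For $x^*\in\Gamma(x^0)$, I would take a subsequence $x^{k_j}\to x^*$, so by Lemma \ref{lemma-Fxk} (ii) also $\overline{x}^{k_j}\to x^*$; continuity of $B$ gives ${\rm supp}(Bx^*)\subset{\rm supp}(Bx^{k_j})=T$ and ${\rm supp}(x^*)\subset S$ for large $j$, while for $i\in T={\rm supp}(B\overline{x}^{k_j})$ the bound $\lvert(B\overline{x}^{k_j})_i\rvert\ge\nu$ passes to the limit as $\lvert(Bx^*)_i\rvert\ge\nu>0$, so $T\subset{\rm supp}(Bx^*)$, and analogously $S\subset{\rm supp}(x^*)$; the two inclusions yield the claimed equalities.

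I expect the main obstacle to be the combinatorial bookkeeping in the middle steps: correctly pinning down that a $\mathcal{K}_1$ step strictly decreases $|T_k|+|S_k|$ (which rests entirely on the one-sided inclusions of the first step and on the failure of \eqref{switch-condition}), that $\mathcal{K}_2$ steps never increase it (via support preservation in $\Pi_k$), and then extracting eventual \emph{equality} of supports from the stabilized sum together with the subset relations.
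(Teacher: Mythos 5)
Your proof is correct, but it takes a genuinely different route from the paper's. The paper first proves (its Claim~1, via a case analysis on whether $k-1\in\mathcal{K}_1$ or $k-1\in\mathcal{K}_2$ and a careful tracking of how the minimum nonzero magnitude can change between consecutive iterates) that $\lvert Bx^k\rvert_{\min}\ge\nu/2$ and $\lvert x^k\rvert_{\min}\ge\nu/2$ for all large $k$, and then deduces the two-sided equalities $T_k=\overline{T}_k$ and $T_k=T_{k+1}$ by comparing magnitudes entrywise. You bypass the lower bound on $\lvert Bx^k\rvert_{\min}$ entirely: you only need the one-sided inclusions $\overline{T}_k\subset T_k$, $\overline{S}_k\subset S_k$ (immediate from $\lvert B\overline{x}^k\rvert_{\min}\ge\nu$ and $\|x^k-\overline{x}^k\|\to0$), and you replace the paper's quantitative argument by a combinatorial one: the integer $\lvert T_k\rvert+\lvert S_k\rvert$ drops strictly at every $\mathcal{K}_1$ step (failure of \eqref{switch-condition} plus the one-sided inclusions) and is non-increasing at every $\mathcal{K}_2$ step (since $x^{k+1}\in\Pi_k$ by Remark \ref{remark-hybrid} (ii)), so it stabilizes, after which no $\mathcal{K}_1$ step can occur and the subset relations force equality of supports. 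Each ingredient you invoke is available in the paper (Proposition \ref{xk-prop} (ii), Lemma \ref{lemma-Fxk} (ii), Remark \ref{remark-hybrid} (ii), and the definition of $\mathcal{K}_2$), and your limit-point argument is also sound, passing $\lvert(B\overline{x}^{k_j})_i\rvert\ge\nu$ to the limit rather than $\lvert(Bx^{k_j})_i\rvert\ge\nu/2$. The trade-off is that the paper's route additionally delivers the uniform bounds $\lvert Bx^k\rvert_{\min}\ge\nu/2$ and $\lvert x^k\rvert_{\min}\ge\nu/2$ on the iterates themselves (a potentially useful quantitative by-product), whereas your route is shorter, purely combinatorial in its core step, and avoids the most delicate estimates of the paper's Claim~1.
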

\begin{proof}
 We complete the proof of the conclusion via the following three claims:

 \noindent
 {\bf Claim 1:} There exists $\overline{k}\in\mathbb{N}$ such that for $k>\overline{k}$, $\lvert Bx^k\rvert_{\min} \geq \frac{\nu}{2}$, where $\nu$ is the same as the one in Proposition \ref{xk-prop} (ii). Indeed, for each $k\!-\!1\in\mathcal{K}_1$, $x^k = \overline{x}^{k-1}$, and $\lvert Bx^k\rvert_{\min} = \lvert B\overline{x}^{k-1}\rvert_{\min} \geq  \nu > \frac{\nu}{2}$ follows by Proposition \ref{xk-prop} (ii). Hence, it suffices to consider that $k\!-\!1\in\mathcal{K}_2$. By Lemma \ref{lemma-Fxk} (ii), there exists $\overline{k}\in\mathbb{N}$ such that for all $k>\overline{k}$, $\|x^{k-1}-\overline{x}^{k-1}\|<\frac{\nu}{4\|B\|_2}$ and $\|d^{k-1}\| < \frac{\nu}{4\|B\|_2}$, which implies that for $\mathcal{K}_2\ni k-1>\overline{k}-1$, $\|Bx^{k-1}\!-\! B\overline{x}^{k-1}\|<\frac{\nu}{4}$ and $\|Bd^{k-1}\|< \frac{\nu}{4}$. For each $\mathcal{K}_2\ni k\!-1>\overline{k}\!-1$, let $i_{k}\in [p]$ be such that $\lvert (Bx^{k-1})_{i_{k}}\rvert =\lvert Bx^{k-1}\rvert_{\min}$. Since condition \eqref{switch-condition} implies that ${\rm supp}(Bx^{k-1}) = {\rm supp}(B\overline{x}^{k-1})$ for each $k-1\in\mathcal{K}_2$, we have $\lvert (B\overline{x}^{k-1})_{i_k}\rvert\ge\lvert B\overline{x}^{k-1}\rvert_{\rm min}$. Thus, for each $\mathcal{K}_2\ni k-1>\overline{k}-1$, 
 \begin{align*}
 \|Bx^{k-1} - B\overline{x}^{k-1}\| 
  &\ge\lvert (Bx^{k-1})_{i_k}-(B\overline{x}^{k-1})_{i_k}\rvert
   \ge\lvert (B\overline{x}^{k-1})_{i_k}\rvert- \lvert (Bx^{k-1})_{i_k} \rvert \\
  &\ge \lvert B\overline{x}^{k-1}\rvert_{\min} - \lvert Bx^{k-1}\rvert_{\min}.
  \end{align*}
  Recall that $\lvert B\overline{x}^{k-1}\rvert_{\min}\ge\nu$ for all $k\in\mathbb{N}$ by Proposition \ref{xk-prop} (ii). Together with the last inequality and $\|Bx^{k-1}-B\overline{x}^{k-1}\|<\frac{\nu}{4}$, for each $\mathcal{K}_2\ni k-1>\overline{k}-1$, we have $\lvert Bx^{k-1}\rvert_{\min} \geq \frac{3\nu}{4}$. For each $\mathcal{K}_2\ni k-1>\overline{k}-1$, let $j_k\in [p]$ be such that $\lvert (Bx^{k})_{j_k}\rvert = \lvert Bx^{k}\rvert_{\min}$. By Remark \ref{remark-hybrid} (ii), ${\rm supp}(Bx^k) \subset {\rm supp}(Bx^{k-1})$ for each $k-1\in\mathcal{K}_2$, which along with $j_k\in {\rm supp}(Bx^k)$ implies that $\lvert (Bx^{k-1})_{j_k} \rvert\geq \lvert Bx^{k-1}\rvert_{\min}$. Thus, for each $\mathcal{K}_2\ni k-1>\overline{k}-1$,
  \begin{align*}
   \|Bd^{k-1}\|&\geq \|Bx^{k} - Bx^{k-1}\| 
   \geq \lvert (Bx^{k-1})_{j_k}-(Bx^{k})_{j_k} \rvert\\
   &\ge \lvert (Bx^{k-1})_{j_k}\rvert- \lvert (Bx^{k})_{j_k} \rvert  
    \geq \lvert Bx^{k-1}\rvert_{\min} - \lvert Bx^{k}\rvert_{\min},
  \end{align*}
   which together with $\|Bd^{k-1}\|\le\frac{\nu}{4}$ and $\lvert Bx^{k-1}\rvert_{\min} \geq \frac{3\nu}{4}$ implies that $\lvert Bx^{k}\rvert_{\min} \ge\frac{\nu}{2}$. 

  \noindent
  {\bf Claim 2:} $T_k = \overline{T}_k$ for $k>\overline{k}$. From the above arguments, $\|Bx^k\!-B\overline{x}^k\|\leq \frac{\nu}{4}$ for $k>\overline{k}$. If $i\in T_k$, then $\lvert (B\overline{x}^k)_i\rvert \geq  \lvert (Bx^k)_i\rvert - \frac{\nu}{4} \geq \frac{\nu}{4}$, where the second inequality is using $\lvert Bx^k\rvert_{\min} \!>\! \frac{\nu}{2}$ by {\bf Claim 1}. This means that $i\in \overline{T}_k$, so $T_k\subset\overline{T}_k$. Conversely, if $i\in\overline{T}_k$, then $\lvert (Bx^k)_i\rvert \geq \lvert (B\overline{x}^k)_i\rvert - \frac{\nu}{4} \geq \frac{3\nu}{4}$, so $i\in T_k$ and $\overline{T}_k \subset T_k$. Thus, $T_k = \overline{T}_k$ for $k>\overline{k}$. 

   \noindent
  {\bf Claim 3:} $T_k = T_{k+1}$ for $k>\overline{k}$. If $k\in \mathcal{K}_1$, the result follows directly by the result in {\bf Claim 2}. If $k\in\mathcal{K}_2$, from the proof of {\bf Claim 1}, $\|Bx^k\!-Bx^{k+1}\|\leq \|Bd^k\|\leq \frac{\nu}{4}$ for all $k>\overline{k}$. Then, if $i\in T_k$, $\lvert (Bx^{k+1})_i\rvert \geq  \lvert (Bx^k)_i\rvert - \frac{\nu}{4} \geq \frac{\nu}{4}$, where the second inequality is using $\lvert Bx^k\rvert_{\min} \!>\! \frac{\nu}{2}$ by {\bf Claim 1}. This implies that $i\in T_{k+1}$ and $T_k \subset T_{k+1}$. Conversely, if $i\in T_{k+1}$, then $\lvert (Bx^k)_i\rvert \geq \lvert (Bx^{k+1})_i\rvert - \frac{\nu}{4} \geq \frac{\nu}{4}$. Hence, $i\in T_k$ and $T_{k+1} \subset T_k$.
 
  From {\bf Claim 2} and {\bf Claim 3}, there exists $T\subset [p]$ such that $T_k = \overline{T}_k = T$ for $k>\overline{k}$. Using the similar arguments, we can also prove that there exists $S\subset [n]$ such that $S_k = \overline{S}_k = S$ for all $k>\overline{k}$ (if necessary increasing $\overline{k}$). 
	
  Pick any $x^* \in \Gamma(x^0)$. Let $\{x^k\}_{k\in K}$ be a subsequence such that $\lim_{K\ni k\to\infty} x^{k} = x^*$. By the above proof, for all sufficiently large $k\in K$, $\lvert Bx^{k}\rvert_{\min} \geq \frac{\nu}{2}$ and $\lvert x^{k}\rvert_{\min} \geq \frac{\nu}{2}$, which implies that $\lvert Bx^*\rvert_{\min} \geq \frac{\nu}{2}$ and $\lvert x^*\rvert_{\min} \geq \frac{\nu}{2}$. The results ${\rm supp}(Bx^*) = T$ and ${\rm supp}(x^*) = S$ can be obtained by a proof similar to {\bf Claim 3}. The proof is completed.
\end{proof}

By Proposition \ref{prop-supp}, $k\in \mathcal{K}_2$ for all $k>\overline{k}$, i.e., the sequence $\{x^{k+1}\}_{k>\overline{k}}$ is generated by the Newton step. This means that $\{x^{k+1}\}_{k>\overline{k}}$ is identical to the one generated by the inexact projected regularized Newton method starting from $x^{\overline{k}+1}$. Also, since $\Pi_k = \Pi_*:=\Pi_{\overline{k}+1}$ for all $k>\overline{k}$, Algorithm \ref{hybrid} finally reduces to the inexact projected regularized Newton method for solving 
\begin{equation}\label{def-psi}
	\min_{x\in\mathbb{R}^n} \ \phi(x) := f(x) + \delta_{\Pi_*}(x),
\end{equation}
which is a minimization problem of function $f$ over polyhedron $\Pi_*$, much simpler than the original problem \eqref{model}. Consequently, the global convergence and local convergence rate analysis of PGiPN for model \eqref{model} boils down to analyzing those of the inexact projected regularized Newton method for \eqref{def-psi}. The rest of this section is devoted to this.
Unless otherwise stated, the notation $\overline{k}$ in the sequel is always the same as that of Proposition \ref{prop-supp}. In addition, we require the assumption that $\nabla^2\!f$ is locally Lipschitz continuous on $\Gamma(x^0)$, where $\Gamma(x^0)$ is defined in Lemma \ref{lemma-Fxk} (iii).

\begin{assumption}\label{ass-Lip}
 $\nabla^2\!f$ is locally Lipschitz continuous on an open set $\mathcal{O}\supset\Gamma(x^0)$.
\end{assumption}  

  Assumption \ref{ass-Lip} is very standard when analyzing the convergence behavior of Newton-type method. The following lemma reveals that under this assumption, the step size $\alpha_k$ in Newton step takes $1$ when $k$ is sufficiently large. Since the proof is similar to that of \cite[Lemma B.1]{liu22}, the details are omitted here.
\begin{lemma}\label{unit-step}
 Suppose that Assumption \ref{ass-Lip} holds. Then $\alpha_k = 1$ for sufficiently large $k$.
\end{lemma}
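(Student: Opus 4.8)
The plan is to prove that for all sufficiently large $k$ the Armijo inequality \eqref{ls-NT} is already satisfied at $t=0$, i.e. $f(x^k+d^k)\le f(x^k)+\varrho\langle\nabla\!f(x^k),d^k\rangle$, which forces $t_k=0$ and hence $\alpha_k=1$. By Proposition \ref{prop-supp}, for $k>\overline{k}$ every iterate comes from the Newton step, so $d^k=y^k-x^k$ is well defined and $\Pi_k\equiv\Pi_*$, and by Lemma \ref{lemma-Fxk}(ii) $\|d^k\|\to 0$. Since $\{x^k\}\subset\Omega$ is bounded with accumulation set $\Gamma(x^0)$, one has ${\rm dist}(x^k,\Gamma(x^0))\to 0$, so for large $k$ both $x^k$ and the segment $[x^k,x^k+d^k]$ lie in the open set $\mathcal{O}$ of Assumption \ref{ass-Lip}. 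Denoting by $L_2$ the Lipschitz modulus of $\nabla^2\!f$ on a convex neighborhood of $\Gamma(x^0)$, Taylor's theorem then gives, for such $k$,
\[
 f(x^k+d^k)\le f(x^k)+\langle\nabla\!f(x^k),d^k\rangle+\tfrac12\langle d^k,\nabla^2\!f(x^k)d^k\rangle+\tfrac{L_2}{6}\|d^k\|^3.
\]

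The next step is to turn the inexact optimality of $y^k$ into a descent estimate. Picking $\zeta^k\in\partial\Theta_k(y^k)$ with $\|\zeta^k\|={\rm dist}(0,\partial\Theta_k(y^k))$, the sum rule yields $\zeta^k=\nabla\!f(x^k)+G_kd^k+v^k$ for some $v^k\in\mathcal{N}_{\Pi_k}(y^k)$; since $x^k\in\Pi_k$, $\langle v^k,y^k-x^k\rangle\ge 0$, whence $\langle\nabla\!f(x^k),d^k\rangle+\langle d^k,G_kd^k\rangle\le\langle\zeta^k,d^k\rangle\le\|\zeta^k\|\|d^k\|$. By \eqref{inexact-cond2} and \eqref{Eq-KKTprox}, $\|\zeta^k\|\le\tfrac12\min\{\|r_k(x^k)\|,\|r_k(x^k)\|^{1+\varsigma}\}\le\tfrac12\|r_k(x^k)\|^{1+\varsigma}$ once $\|r_k(x^k)\|\le 1$. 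I would then use that, for each of the three choices \eqref{rnm_liu}--\eqref{rnm_old}, the correction added to $\nabla^2\!f(x^k)$ is positive semidefinite, so $\nabla^2\!f(x^k)\preceq G_k$; substituting $\langle d^k,\nabla^2\!f(x^k)d^k\rangle\le\langle d^k,G_kd^k\rangle$ together with $\tfrac12\langle d^k,G_kd^k\rangle\le-\tfrac12\langle\nabla\!f(x^k),d^k\rangle+\tfrac12\|\zeta^k\|\|d^k\|$ into the Taylor bound gives
\[
 f(x^k+d^k)-f(x^k)-\varrho\langle\nabla\!f(x^k),d^k\rangle\le\big(\tfrac12-\varrho\big)\langle\nabla\!f(x^k),d^k\rangle+\tfrac12\|\zeta^k\|\|d^k\|+\tfrac{L_2}{6}\|d^k\|^3.
\]

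To close the argument I would invoke the two-sided bound $c_1\|r_k(x^k)\|\le\|d^k\|\le c_2\|r_k(x^k)\|^{1-\sigma}$ from Proposition \ref{xk-prop}(iii) and the curvature estimate $\langle\nabla\!f(x^k),d^k\rangle\le-\tfrac{b_1}{2}\|r_k(x^k)\|^{\sigma}\|d^k\|^2$ coming from \eqref{convex_subgrad}, \eqref{con-Gk} and \eqref{Eq-KKTprox}. As $\varrho<\tfrac12$, the first term on the right is $\le-(\tfrac12-\varrho)\tfrac{b_1}{2}\|r_k(x^k)\|^{\sigma}\|d^k\|^2$, so after dividing the whole inequality by $\|d^k\|^2>0$ the desired nonpositivity reduces to
\[
 \frac{\|r_k(x^k)\|^{\varsigma-\sigma}}{4c_1}+\frac{L_2c_2}{6}\|r_k(x^k)\|^{1-2\sigma}\le\Big(\tfrac12-\varrho\Big)\frac{b_1}{2}.
\]
Because $\varsigma>\sigma$ and $\sigma<\tfrac12$, both exponents $\varsigma-\sigma$ and $1-2\sigma$ are strictly positive, while $\|r_k(x^k)\|=\|\overline{\mu}_k(x^k-\overline{x}^k)\|\to 0$ by Lemma \ref{lemma-Fxk}(ii) and Remark \ref{remark-hybrid}(iv); hence the left-hand side tends to $0$ and the inequality holds for all large $k$, giving $\alpha_k=1$.

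I expect the main obstacle to be the balancing of exponents in the last display. The inexactness error enters at order $\|r_k\|^{1+\varsigma}$ and, after division by $\|d^k\|\ge c_1\|r_k\|$, becomes order $\|r_k\|^{\varsigma}$, so it can be dominated by the curvature term of order $\|r_k\|^{\sigma}$ only because $\varsigma>\sigma$; symmetrically, the cubic Taylor remainder is absorbed only because $\sigma<\tfrac12$ makes $1-2\sigma>0$. A secondary technical point is verifying $\nabla^2\!f(x^k)\preceq G_k$ uniformly over the three admissible surrogates, which is precisely where the structural form of $G_k^1,G_k^2,G_k^3$ is used; the remaining manipulations are routine and parallel \cite[Lemma B.1]{liu22}.
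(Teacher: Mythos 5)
Your argument is correct and follows essentially the route the paper intends: the paper omits the proof, deferring to \cite[Lemma B.1]{liu22}, and your Taylor-expansion argument with the cubic remainder, the bound $\nabla^2\!f(x^k)\preceq G_k$, the inexactness bound on ${\rm dist}(0,\partial\Theta_k(y^k))$, and the exponent bookkeeping via $c_1\|r_k(x^k)\|\le\|d^k\|\le c_2\|r_k(x^k)\|^{1-\sigma}$ is precisely that standard argument, correctly adapted to the present setting (in particular the verification that all three surrogates $G_k^1,G_k^2,G_k^3$ dominate $\nabla^2\!f(x^k)$, and the use of $\varsigma>\sigma$ and $\sigma<\tfrac12$ to kill the error terms). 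No gaps.
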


Notice that $\Pi_*$ is a polyhedron, which can be expressed as  
\begin{equation}\label{def-pistar}
\Pi_* = \big\{x\in\mathbb{R}^n \ \lvert \ B_{T_{\overline{k}+1}^c\cdot}x= 0, \ x_{S_{\overline{k}+1}^c} = 0, \ x\geq l,\ -x \geq -u\big\}.
\end{equation}
For any $x\in\mathbb{R}^n$, we define multifunction $\mathcal{A}:\mathbb{R}^n\rightrightarrows [2n]$ as 
$$\mathcal{A}(x) := \{i \ \lvert \ x_i = l_i\} \cup \{i+n \ \lvert \ x_i = u_i\}.$$
Clearly, for $x\in \Pi_*$, $\mathcal{A}(x)$ is the active set of constraint $\Pi_*$ at $x$. 
To prove the global convergence for PGiPN, we first show that $\{\mathcal{A}(x^k)\}_{k\in\mathbb{N}}$ remains stable when $k$ is sufficiently large, under the following non-degeneracy assumption.

\begin{assumption}\label{ass-ri}
 For all $x^*\in \Gamma(x^0)$, $0 \in\nabla f(x^*)+ {\rm ri}(\mathcal{N}_{\Pi_*}(x^*)).$
\end{assumption}

It follows from Proposition \ref{prop-opt} and Lemma \ref{lemma-Fxk} (iii) that for each $x^*\in\Gamma(x^0)$, $x^*$ is a stationary point of $F$, which together with Proposition \ref{prop-supp} and Lemma \ref{subdiff-hpconvex} (i) yields that $0 \in \nabla f(x^*)+ \mathcal{N}_{\Pi_*}(x^*)$, so that Assumption \ref{ass-ri} substantially requires that $-\nabla f(x^*)$ does not belong to the relative boundary\footnote{For $\Xi\subset \mathbb{R}^n$, the set difference ${\rm cl }(\Xi)\backslash {\rm ri}(\Xi)$ is called the relative boundary of $\Xi$, see \cite[p. 44]{convexanalysis}.} of $\mathcal{N}_{\Pi_*}(x^*)$. In the next lemma, we prove that under Assumptions \ref{ass-Lip}-\ref{ass-ri}, $\mathcal{A}(x^k) = \mathcal{A}(x^{k+1})$ for sufficiently large $k$.

\begin{lemma}\label{actset-stable}
Let $\{x^k\}_{k\in\mathbb{N}}$ be the sequence generated by Algorithm \ref{hybrid}. Suppose that Assumptions \ref{ass-Lip}-\ref{ass-ri} hold. Then, there exist $\mathcal{A}^*\subset [2n]$ and a closed and convex cone $\mathcal{N}^*\subset \mathbb{R}^n$ such that $\mathcal{A}(x^k) = \mathcal{A}^*$ and $\mathcal{N}_{\Pi_*}(x^k) = \mathcal{N}^*$ for sufficiently large $k$.
\end{lemma}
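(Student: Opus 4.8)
The plan is to reduce the claim to an active-set identification statement and then to upgrade identification from the auxiliary projected points to the genuine iterates. Throughout I work with $k>\overline{k}$, so that by Proposition \ref{prop-supp} every step is a Newton step, $\Pi_k\equiv\Pi_*$ and $x^k\in\Pi_*$. Since $\mathcal{N}_{\Pi_*}(x)$ is completely determined by which box faces are active at $x$, i.e. by $\mathcal{A}(x)$ (the equality constraints $B_{T^c\cdot}x=0$, $x_{S^c}=0$ being active at every point of $\Pi_*$), the two assertions are equivalent, and it suffices to prove that $\mathcal{A}(x^k)$ is eventually constant. I would record the facts I will lean on: $\|x^k-\overline{x}^k\|\to0$ and $\|d^k\|\to0$ (Lemma \ref{lemma-Fxk}(ii)), hence $\|x^{k+1}-x^k\|=\alpha_k\|d^k\|\to0$; the identity $\overline{x}^k={\rm proj}_{\Pi_*}(x^k-\overline{\mu}_k^{-1}\nabla f(x^k))$ from Remark \ref{remark-hybrid}(iii); and $\alpha_k=1$ for large $k$ by Lemma \ref{unit-step}, so that $x^{k+1}=y^k$.

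First I would set up identification at a limit point. Fix $x^*\in\Gamma(x^0)$; by Proposition \ref{prop-opt}, Lemma \ref{lemma-Fxk}(iii) and Lemma \ref{subdiff-hpconvex}(i) one has $-\nabla f(x^*)\in\mathcal{N}_{\Pi_*}(x^*)$, which Assumption \ref{ass-ri} sharpens to $-\nabla f(x^*)\in{\rm ri}\,\mathcal{N}_{\Pi_*}(x^*)$. Because $\mathcal{N}_{\Pi_*}(x^*)$ is a cone, this is equivalent to $z^*-x^*\in{\rm ri}\,\mathcal{N}_{\Pi_*}(x^*)$ for $z^*:=x^*-\mu^{-1}\nabla f(x^*)$ and every $\mu\in[\mu_{\min},\widetilde{\mu}]$, so $x^*={\rm proj}_{\Pi_*}(z^*)$ and the non-degeneracy holds uniformly in $\mu$. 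I would then invoke the finite-identification property of projections onto a polyhedron (a strict-complementarity argument on $\Pi_*$): there is a radius $\delta^*>0$ such that $\mathcal{A}({\rm proj}_{\Pi_*}(z))=\mathcal{A}(x^*)$ and $\mathcal{N}_{\Pi_*}({\rm proj}_{\Pi_*}(z))=\mathcal{N}_{\Pi_*}(x^*)$ whenever $z$ lies within $\delta^*$ of the compact arc $\{x^*-\mu^{-1}\nabla f(x^*):\mu\in[\mu_{\min},\widetilde{\mu}]\}$, compactness in $\mu$ making $\delta^*$ uniform.

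Next I would argue that the active set is common to all limit points. The map $x^*\mapsto\mathcal{A}(x^*)$ is locally constant on $\Gamma(x^0)$: if $x_1^*,x_2^*\in\Gamma(x^0)$ are close, then $z_2^*=x_2^*-\mu^{-1}\nabla f(x_2^*)$ lies in the $\delta^*$-tube around $x_1^*$, whence $\mathcal{A}(x_2^*)=\mathcal{A}({\rm proj}_{\Pi_*}(z_2^*))=\mathcal{A}(x_1^*)$. Since $\|x^{k+1}-x^k\|\to0$ and $\{x^k\}$ is bounded, the compact set $\Gamma(x^0)$ (Lemma \ref{lemma-Fxk}(iii)) is connected by the classical Ostrowski-type argument; a locally constant set-valued map on a connected set is constant, so there exist $\mathcal{A}^*$ and $\mathcal{N}^*$ with $\mathcal{A}(x^*)=\mathcal{A}^*$ and $\mathcal{N}_{\Pi_*}(x^*)=\mathcal{N}^*$ for every $x^*\in\Gamma(x^0)$. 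Applying the identification of the previous paragraph to $z^k=x^k-\overline{\mu}_k^{-1}\nabla f(x^k)$, which eventually enters every such tube because ${\rm dist}(x^k,\Gamma(x^0))\to0$, gives $\mathcal{A}(\overline{x}^k)=\mathcal{A}^*$ for all large $k$.

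The hard part will be to pass from the auxiliary points $\overline{x}^k$, where identification is automatic, to the genuine iterates $x^k=y^{k-1}$. This transfer is delicate: a feasible point with small KKT residual need not lie on the identified face (a one-dimensional example with $\Pi_*=[0,\infty)$ shows $\|R_k(y^k)\|$ can be arbitrarily small while $y^k$ stays strictly interior), so the abstract criteria \eqref{inexact-cond1}--\eqref{inexact-cond2} alone do not force $\mathcal{A}(y^k)=\mathcal{A}^*$. I would therefore exploit the structure of the Newton step: since $\alpha_k=1$ eventually, $x^{k+1}=y^k\in\Pi_*$, and I would show that the strongly active constraints (those in $\mathcal{A}^*$, carrying strictly positive multipliers by non-degeneracy) are preserved by the inexact projected Newton update, using the uniform margin supplied by strict complementarity together with $\|R_k(y^k)\|\le\frac12\min\{\|r_k(x^k)\|,\|r_k(x^k)\|^{1+\varsigma}\}\to0$ (Lemma \ref{well-defined}(iii)) and the reduced-space manner in which $y^k$ is computed. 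Combining $\mathcal{A}(x^k)\subseteq\mathcal{A}^*$ (outer semicontinuity of $\mathcal{A}$ along $x^k\to\Gamma(x^0)$, all limit points sharing $\mathcal{A}^*$) with this reverse inclusion yields $\mathcal{A}(x^k)=\mathcal{A}^*$, and hence $\mathcal{N}_{\Pi_*}(x^k)=\mathcal{N}^*$, for all sufficiently large $k$.
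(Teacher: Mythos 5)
Your reduction to eventual constancy of $\mathcal{A}(x^k)$, the use of the relative-interior condition to get identification for the projected points $\overline{x}^k={\rm proj}_{\Pi_*}(x^k-\overline{\mu}_k^{-1}\nabla f(x^k))$, and the Ostrowski/connectedness argument for a common active set over $\Gamma(x^0)$ are all sound. But the proof has a genuine gap exactly where you flag "the hard part": you never actually establish $\mathcal{A}^*\subseteq\mathcal{A}(x^k)$ for the true iterates. The closing sentence ("I would show that the strongly active constraints are preserved by the inexact projected Newton update, using the uniform margin supplied by strict complementarity together with $\|R_k(y^k)\|\to 0$ and the reduced-space manner in which $y^k$ is computed") is a plan, not an argument, and your own one-dimensional example shows why it cannot go through on the strength of $\|R_k(y^k)\|\to 0$ alone: smallness of the subproblem residual does not prevent $y^k$ from sitting strictly inside the face. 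So the proof as written identifies the active set only at the auxiliary points $\overline{x}^k$ and stops short of the claimed conclusion.

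The paper closes this gap by a different and more direct route that you should compare against. It first proves (its Claim 1) that the full stationarity residual of $\phi=f+\delta_{\Pi_*}$ vanishes along the actual iterates: from the inexact condition \eqref{inexact-cond2} one gets $\zeta_k\to 0$ with $0\in\nabla f(x^k)+G_kd^k+\zeta_k+\mathcal{N}_{\Pi_*}(x^k+d^k)$, hence $\nabla f(x^k+d^k)-\nabla f(x^k)-G_kd^k-\zeta_k\in\partial\phi(x^k+d^k)$; since $\|d^k\|\to 0$, $G_k$ is bounded and $\alpha_k=1$ eventually, ${\rm dist}(0,\partial\phi(x^{k+1}))=\|{\rm proj}_{\mathcal{T}_{\Pi_*}(x^{k+1})}(-\nabla f(x^{k+1}))\|\to 0$. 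This is strictly stronger than $\|R_k(y^k)\|\to 0$ and is precisely what rules out your counterexample (there the tangent-cone projection of $-\nabla f$ at an interior point would converge to $-\nabla f(x^*)\neq 0$). With this in hand, the Burke--Mor\'e identification theorem (\cite[Corollary 3.6]{burke1988}) applies \emph{directly to the iterates} $x^{k+1}$ under Assumption \ref{ass-ri}, giving $\mathcal{A}(x^{k+1})=\mathcal{A}(x^*)$ along any convergent subsequence; combined with the outer-semicontinuity inclusion $\mathcal{A}(x^k)\subseteq\mathcal{A}(x^*)$ this yields $\mathcal{A}(x^k)\subseteq\mathcal{A}(x^{k+1})$ for large $k$, and finiteness of subsets of $[2n]$ forces eventual constancy — no transfer from $\overline{x}^k$ and no connectedness of $\Gamma(x^0)$ are needed. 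To repair your proof, replace the transfer step by this Claim-1-type estimate on ${\rm dist}(0,\partial\phi(x^k))$ and apply the polyhedral identification result to $\{x^k\}$ itself.
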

\begin{proof}
    We complete the proof via the following two claims.

    {\bf Claim 1:} We first claim that $\lim_{k\rightarrow\infty} \|{\rm proj}_{\mathcal{T}_{\Pi_*}(x^k)}(-\nabla\! f(x^k))\| = 0.$
	Since $\Pi_*$ is polyhedral, for any $x\in\Pi_*$, $\mathcal{T}_{\Pi_*}(x)$ and $\mathcal{N}_{\Pi_*}(x)$ are closed and convex cones, and $\mathcal{T}_{\Pi_*}(x)$ is polar to $\mathcal{N}_{\Pi_*}(x)$, which implies that when $k$ is sufficiently large, $z = {\rm proj}_{\mathcal{T}_{\Pi_*}(x^k)}(z) + {\rm proj}_{\mathcal{N}_{\Pi_*}(x^k)}(z)$ holds for any $z\in\mathbb{R}^n$.  Then, for all sufficiently large $k$, 
    $$\|{\rm proj}_{\mathcal{T}_{\Pi_*}(x^k)}(-\nabla\! f(x^k))\| = \|-\!\nabla\!f(x^k)\! -\! {\rm proj}_{\mathcal{N}_{\Pi_*}(x^k)}(-\nabla\! f(x^k))\| = {\rm dist}(0, \partial \phi(x^k)).$$
  Therefore, it suffices to prove that $\lim_{k\rightarrow\infty} {\rm dist}(0, \partial \phi(x^k)) = 0$. By Proposition \ref{prop-supp}, equation \eqref{inexact-cond2}, Remark \ref{remark-hybrid} (iv) and Lemma \ref{lemma-Fxk} (ii), there exists $\{\zeta_k\}_{k>\overline{k}}$ with $\lim_{k\rightarrow \infty} \|\zeta_k\| = 0$ such that $0\in \nabla f(x^k) + G_kd^k + \zeta_k + \mathcal{N}_{\Pi_k}(x^k+d^k)$ for each $k\ge\overline{k}$, which implies that $\nabla f(x^k + d^k) - \nabla f(x^k)- G_k d^k-\zeta_k \in \partial \phi(x^k + d^k)$ for each $k\ge\overline{k}$. This together with Lemma \ref{lemma-Fxk} (ii) and the continuity of $\nabla f$ implies that $\lim_{k\rightarrow\infty} {\rm dist}(0,\partial \phi(x^k+d^k)) = 0$. Thus, by Lemma \ref{unit-step} we obtain the desired result. 
	
 {\bf Claim 2:} We now show that $\mathcal{A}(x^k) \subset \mathcal{A}(x^{k+1})$ for all sufficiently large $k$. If not, there exists $K \subset \mathbb{N}$ such that $\mathcal{A}(x^k) \not\subset \mathcal{A}(x^{k+1})$ for all $k\in K$. If necessary taking a subsequence, we assume that $\{x^k\}_{k\in K}$ converges to $x^*$. By Lemma \ref{lemma-Fxk} (ii), $\{x^{k+ 1}\}_{k\in K}$ converges to $x^*$. In addition, from {\bf Claim 1} it follows that $\lim_{k\rightarrow\infty}\|{\rm proj}_{\mathcal{T}_{\Pi_*}(x^{k+1})} (-\nabla\!f(x^{k+1}))\| = 0$. The two sides along with Assumption \ref{ass-ri} and \cite[Corollary 3.6]{burke1988} yields that $\mathcal{A}(x^{k+1}) = \mathcal{A}(x^*)$ for all sufficiently large $k\in K$. Since $\mathcal{A}(x^k) \subset \mathcal{A}(x^*)$ for sufficiently large $k\in K$, we have $\mathcal{A}(x^{k}) \subset \mathcal{A}(x^{k+1})$ for sufficiently large $k\in K$, contradicting to $\mathcal{A}(x^{k}) \not\subset \mathcal{A}(x^{k+1})$ for $k\in K$. The claimed fact that $\mathcal{A}(x^k) \subset \mathcal{A}(x^{k+1})$ then follows. 
 
 From $\mathcal{A}(x^k) \subset \mathcal{A}(x^{k+1})$ for all sufficiently large $k$, 
 $\{\mathcal{A}(x^k)\}_{k\in\mathbb{N}}$ converges to for some $A^*\subset[2n]$ in the sense of Painlev$\acute{e}$-Kuratowski. From the finiteness of $\mathcal{A}^*$, we conclude that $\mathcal{A}(x^k) = \mathcal{A}^*$ for all sufficiently large $k$. From the expression of $\Pi_{*}$ in \eqref{def-pistar} and $\mathcal{A}(x^k) = \mathcal{A}^*$ for all sufficiently large $k$, we have $\mathcal{N}_{\Pi_*}(x^k) = \mathcal{N}^*$ for all sufficiently large $k$. 
\end{proof}

Our proof for the global convergence of PGiPN additionally requires the following assumption.
\begin{assumption}\label{ass-ratio}
	For every sufficiently large $k$, there exists $\xi_k \in \mathcal{N}_{\Pi_*}(x^k)$ such that  $$\liminf_{k\rightarrow\infty}\frac{-\langle \nabla \!f(x^k)+\xi_k, d^k\rangle}{\|\nabla f(x^k) + \xi_k\| \|d^k\|} > 0.$$
\end{assumption} 

This assumption essentially requires that for every sufficiently large $k$ there exists one element $\xi_k \in \partial \phi(x^k)$ such that the angle between $\nabla\!f(x^k)+\xi_k$ and $d^k$ is uniformly larger than ${\pi}/{2}$. For every sufficiently large $k$, since $x^k+\alpha d^k\in\Pi_*$ for all $\alpha\in[0,1]$, we have $d^k\in\mathcal{T}_{\Pi^*}(x^k)$, which implies that $\langle\xi^k,d^k\rangle\le 0$. Together with \eqref{convex_subgrad},  
for every sufficiently large $k$, the angle between $\nabla\!f(x^k)+\xi_k$ and $d^k$ is larger than $\pi/2$. This means that it is highly possible for Assumption \ref{ass-ratio} to hold. Obviously, when $n=1$, it automatically holds.

Next, we show that if $\phi$ is a KL function and Assumptions \ref{ass-Lip}-\ref{ass-ratio} hold, the sequence generated by PGiPN is Cauchy and converges to an $L$-stationary point. 

\begin{theorem}\label{gconverge}
 Let $\{x^k\}_{k\in\mathbb{N}}$ be the sequence generated by Algorithm \ref{hybrid}. Suppose that Assumptions \ref{ass-Lip}-\ref{ass-ratio} hold, and that $\phi$ is a KL function. Then, $\sum_{k=1}^\infty \|x^{k+1}\!-\!x^k\|<\infty$, and consequently $\{x^k\}_{k\in\mathbb{N}}$ converges to an $L$-stationary point of \eqref{model}.
\end{theorem}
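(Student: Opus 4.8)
The plan is to use the reductions already in hand to pass to a regime where the sequence is generated purely by the Newton step, and then to run a Kurdyka--\L ojasiewicz (KL) telescoping argument in which Assumption \ref{ass-ratio} delivers exactly the ``descent $\gtrsim$ subgradient $\times$ step'' estimate that the KL inequality consumes. First I would restrict to the tail. By Proposition \ref{prop-supp} there is $\overline{k}$ with $k\in\mathcal{K}_2$ and $\Pi_k=\Pi_*$ for all $k>\overline{k}$; by Lemma \ref{unit-step} the step size $\alpha_k=1$ for all large $k$, so that $x^{k+1}=x^k+d^k$ and $\|x^{k+1}-x^k\|=\|d^k\|$; and on this tail $g(x^k)$ is constant, so $\phi(x^k)=f(x^k)=F(x^k)-\mathrm{const}$ because $x^k\in\Pi_*$. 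Consequently $\{\phi(x^k)\}$ is nonincreasing and convergent by Proposition \ref{xk-prop}(i); write $\phi^*$ for its limit. Since $\Gamma(x^0)$ is compact by Lemma \ref{lemma-Fxk}(iii) and $\phi(x^k)\downarrow\phi^*$, the value $\phi$ is the constant $\phi^*$ on $\Gamma(x^0)$, which lets me invoke the uniformized KL property on a neighborhood of $\Gamma(x^0)$ and fix a single $\varphi\in\Upsilon_{\eta}$ usable for all large $k$.

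The crux is the one-step estimate $\phi(x^k)-\phi(x^{k+1})\ge c\,\mathrm{dist}(0,\partial\phi(x^k))\,\|x^{k+1}-x^k\|$ for some $c>0$. To obtain it I would start from the line-search inequality \eqref{ls-NT} with $\alpha_k=1$, which gives $\phi(x^k)-\phi(x^{k+1})=f(x^k)-f(x^{k+1})\ge-\varrho\langle\nabla f(x^k),d^k\rangle$. Since $x^k,y^k\in\Pi_*$ and $\Pi_*$ is convex, $d^k\in\mathcal{T}_{\Pi_*}(x^k)$, so for the element $\xi_k\in\mathcal{N}_{\Pi_*}(x^k)$ provided by Assumption \ref{ass-ratio} one has $\langle\xi_k,d^k\rangle\le0$ and hence $-\langle\nabla f(x^k),d^k\rangle\ge-\langle\nabla f(x^k)+\xi_k,d^k\rangle$. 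Assumption \ref{ass-ratio} bounds the right-hand side below by $\theta_0\,\|\nabla f(x^k)+\xi_k\|\,\|d^k\|$ for some $\theta_0>0$ and all large $k$, while $\nabla f(x^k)+\xi_k\in\partial\phi(x^k)$ yields $\|\nabla f(x^k)+\xi_k\|\ge\mathrm{dist}(0,\partial\phi(x^k))$. Chaining these gives the estimate with $c=\varrho\theta_0$.

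With this estimate the finite-length conclusion is routine. Combining concavity of $\varphi$ with the KL inequality $\varphi'(\phi(x^k)-\phi^*)\,\mathrm{dist}(0,\partial\phi(x^k))\ge1$ at $x^k$ produces, for all large $k$,
\[
\varphi(\phi(x^k)-\phi^*)-\varphi(\phi(x^{k+1})-\phi^*)\ \ge\ \varphi'(\phi(x^k)-\phi^*)\bigl[\phi(x^k)-\phi(x^{k+1})\bigr]\ \ge\ c\,\|x^{k+1}-x^k\|,
\]
where the degenerate case $\phi(x^k)=\phi^*$ forces $\phi(x^k)-\phi(x^{k+1})=0$, hence $d^k=0$ and finite termination. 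Summing telescopes the left-hand side to at most $\varphi(\phi(x^{k_0})-\phi^*)<\infty$, so $\sum_k\|x^{k+1}-x^k\|<\infty$; the finitely many terms with $k\le\overline{k}$ or $\alpha_k\ne1$ add only a finite amount. Therefore $\{x^k\}$ is Cauchy, converges to some $x^*\in\Gamma(x^0)$, and $x^*$ is an $L$-stationary point of \eqref{model} by Lemma \ref{lemma-Fxk}(iii).

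I expect the main obstacle to be twofold. First, the genuinely delicate point is the recognition that Assumption \ref{ass-ratio} is indispensable: the naive route through the sufficient-descent and relative-error bounds gives mismatched exponents (descent of order $\|x^k-\overline{x}^k\|^{2+\sigma}$ against relative error of order $\|x^k-\overline{x}^k\|^{1-\sigma}$, hence an iterate-gap recursion $\|d^{k+1}\|^{q}\lesssim(\Delta_{k+1}-\Delta_{k+2})\|d^k\|$ with $q=(2-\sigma)/(1-\sigma)>2$), and such a recursion need not be summable under mere summability of $\{\Delta_{k+1}-\Delta_{k+2}\}$; the angle condition circumvents this by yielding the clean estimate above. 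Second, at the level of rigor one must carry out the standard Attouch--Bolte--Svaiter bookkeeping to guarantee that the entire tail remains inside the single KL neighborhood of $\Gamma(x^0)$ on which $\varphi$ is valid, so that the one-step estimate applies for every large $k$ rather than along a subsequence only.
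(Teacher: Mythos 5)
Your proposal is correct and follows essentially the same route as the paper's proof: reduce to the Newton-step tail where $\Pi_k=\Pi_*$ and $\alpha_k=1$, invoke the uniformized KL property on a neighborhood of $\Gamma(x^0)$, derive the one-step estimate $\phi(x^k)-\phi(x^{k+1})\ge \varrho c\,\mathrm{dist}(0,\partial\phi(x^k))\,\|x^{k+1}-x^k\|$ from the line search \eqref{ls-NT}, the pairing $\langle\xi_k,d^k\rangle\le 0$, and Assumption \ref{ass-ratio}, then telescope via the concave desingularizer. The only cosmetic differences are that the paper disposes of the degenerate case $\phi(x^{\widetilde k})=\phi(x^{\widetilde k+1})$ up front via Lemma \ref{lemma-Fxk}(i) rather than inside the KL step, and it additionally cites Lemma \ref{actset-stable}, neither of which changes the substance.
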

\begin{proof}
	If there exists $\widetilde{k}>\overline{k}$ such that $\phi(x^{\widetilde{k}}) = \phi(x^{\widetilde{k}+1})$, then $F(x^{\widetilde{k}}) = F(x^{\widetilde{k}+1})$ by Proposition \ref{prop-supp}, which together with Lemma \ref{lemma-Fxk} (i) yields
	that $x^{\widetilde{k}} = \overline{x}^{\widetilde{k}}$. Consequently, $x^{\widetilde{k}}$ meets the termination condition of Algorithm \ref{hybrid}, so that $\{x^k\}_{k\in\mathbb{N}}$ converges to an $L$-stationary point of \eqref{model} within a finite number of steps. Thus, we only need to consider the case that $\phi(x^k)>\phi(x^{k+1})$ for all $k>\overline{k}$.
	By \cite[Lemma 6]{Bolte14}, there exist $\varepsilon>0,\eta>0$
	and a continuous concave function $\varphi\in \Upsilon_{\eta}$ such that for all $\overline{x}\in \Gamma(x^0)$
	and $x\in\{z\in\mathbb{R}^n\ \lvert \ {\rm dist}(z, \Gamma(x^0))<\varepsilon\}\cap[\phi(\overline{x})<\phi<\phi(\overline{x})+\eta]$,
	$ \varphi'(\phi(x) -\phi(\overline{x})){\rm dist}(0,\partial \phi(x)) \geq 1$, where $\Upsilon_\eta$ is defined in Definition \ref{KL-Def}, and $\Gamma(x^0)$ is defined in Lemma \ref{lemma-Fxk} (iii).
	Clearly, $\lim_{k\rightarrow\infty} {\rm dist}(x^k, \Gamma(x^0))=0$.  Pick any $x^*\in \Gamma(x^0)$. By the definition of $\phi$, Propositions \ref{xk-prop} (i) and \ref{prop-supp}, we have $\lim_{k\to\infty}\phi(x^k)=\phi(x^*)$. Then, for $k>\overline{k}$ (if necessary by increasing $\overline{k}$), 
	$x^k\in\{z\in\mathbb{R}^n\ \lvert \ {\rm dist}(z, \Gamma(x^0))<\varepsilon\}\cap[\phi(x^*)<\phi<\phi(x^*)+\eta]$. Consequently, for all $k>\overline{k}$, 
	\begin{equation}\label{KLexpre}
		\varphi'(\phi(x^k) -\phi(x^*)) {\rm dist}(0, \partial \phi(x^k)) \geq 1.
	\end{equation}
	By Assumption \ref{ass-ratio}, there exist $c>0$ and $\xi_k \in \mathcal{N}_{\Pi_*}(x^k)$ such that
	for all suffciently large $k$,
	\begin{equation}\label{ass-lbound-coro}
		-\langle \nabla f(x^k) + \xi_k, d^k \rangle >c \|\nabla f(x^k)+\xi_k\|\| d^k\|.
	\end{equation}
	From Lemma \ref{actset-stable} we have $\mathcal{N}_{\Pi_*}(x^k) = \mathcal{N}_{\Pi_*}(x^{k+1})$ for all $k>\overline{k}$ (by possibly enlarging $\overline{k}$), which implies that $\xi_k \in  \mathcal{N}_{\Pi_*}(x^{k+1})$.
	Together with \eqref{ls-NT}, \eqref{ass-lbound-coro} and Lemma \ref{unit-step}, we have that for all $k>\overline{k}$ (if necessary enlarging $\overline{k}$),
	\begin{equation}\label{FS-nabla}
			\frac{\phi(x^{k}) - \phi(x^{k+1})}{{\rm dist}(0,\partial \phi(x^k))}\!\ge\! \frac{-\varrho \langle \nabla f(x^k) + \xi_k, d^k\rangle }{{\rm dist}(0,\partial \phi(x^k))} \geq \frac{\varrho c\|\nabla f(x^k) + \xi_k\|\|d^k\|}{\|\nabla f(x^k) + \xi_k\|} = \varrho c\|x^{k+1}-x^k\|,
	\end{equation}
	where the second inequality follows by $\nabla f(x^k) + \xi_k \in \partial \phi(x^k)$ and \eqref{ass-lbound-coro}. For each $k$, let $\Delta_{k}:=\varphi(\phi(x^k)\!-\!\phi(x^*))$. From \eqref{KLexpre}, \eqref{FS-nabla} and the concavity of $\varphi$ on $[0,\eta)$, for all $k>\overline{k}$,
   \begin{align*}\label{KL-concave}
	\Delta_k-\Delta_{k+1}&=\phi(x^k)-\phi(x^{k+1})
		\geq \varphi'(\phi(x^k)\!-\!\phi(x^*))(\phi(x^k)\!-\!\phi(x^{k+1}))\\
     &\ge\frac{\phi(x^k)-\phi(x^{k+1})}{{\rm dist}(0,\partial \phi(x^k))}
		\geq \varrho c\|x^{k+1}-x^k\|.
    \end{align*}
    Summing this inequality from $\overline{k}$ to any  $k>\overline{k}$ and using $\Delta_k\ge 0$ yields that
	\begin{equation*}
		\sum_{j=\overline{k}}^k\|x^{j+1}\!-\!x^{j}\|
		\le\frac{1}{\varrho c} \sum_{j=\overline{k}}^k(\Delta_j\!-\!\Delta_{j+1})
		= \frac{1}{\varrho c}(\Delta_{\overline{k}}\!-\!\Delta_{k+1})
		\leq \frac{1}{\varrho c}\Delta_{\overline{k}}.
	\end{equation*}
	Passing the limit $k\to\infty$ leads to
	$\sum_{j=\overline{k}}^\infty \|x^{j+1}\!-\!x^j\|<\infty$. Thus, $\{x^k\}_{k\in\mathbb{N}}$ is a Cauchy sequence and converges to  $x^*$. It follows from Lemma \ref{lemma-Fxk} (iii) that $x^*$ is an $L$-stationary point of model \eqref{model}. The proof is completed.
\end{proof}

We now focus on the superlinear rate analysis of PGiPN.
Denote  $$\mathcal{X}^*: = \big\{x\in\mathbb{R}^n \ \lvert \ 0 \in \nabla f(x)+ \mathcal{N}_{\Pi_*}(x), \ \nabla^2 f(x) \succeq 0\big\},$$ which we call by the set of second-order stationary points of \eqref{def-psi}. Based on this notation, we assume that a local H\"{o}lderian error bound condition holds with $\mathcal{X}^*$ in Assumption \ref{ass4}. For more introduction on the H\"{o}lderian error bound condition, we refer the interested readers to \cite{Mordu23} and \cite{liu22}. 

\begin{assumption}\label{ass4}
 The $q$-subregularity of function $r(x):=x-{\rm proj}_{\Pi_*}(x - \nabla f(x))$ holds at $x^*$ for the origin with $\mathcal{X}^*$, i.e., there exist $\varepsilon>0, \ \kappa > 0$ and $q\in (0, 1]$ such that for all $x\in \mathbb{B}(x^*, \varepsilon)\cap \Pi_{*},$
	${\rm dist}(x,r^{-1}(0))={\rm dist}(x, \mathcal{X}^*) \leq \kappa\|r(x)\|^q.$
\end{assumption}

Recently, Liu et al. \cite{liu22} proposed an inexact
regularized proximal Newton method (IRPNM) for solving the problems, consisting of a smooth function and an extended real-valued convex function, which includes \eqref{def-psi} as a special case. They studied the superlinear convergence rate of IRPNM under Assumptions \ref{ass-Lip} and \ref{ass4}. 
By \cite[Lemma 4]{sra12} and $\overline{\mu}_k\in[\mu_{\rm min},\widetilde{\mu})$, $\|r_k(x^k)\| = O(\|r(x^k)\|)$ for sufficiently large $k$. This together with Assumption \ref{ass4} implies that there exists $\widehat{\kappa}>0$ such that for sufficiently large $k$ with $x^k \in \mathbb{B}(x^*, \varepsilon)$,
\begin{equation}\label{eq-errorbound}
    {\rm dist}(x^k, \mathcal{X}^*) \leq \widehat{\kappa} \|r_k(x^k)\|^q.
\end{equation}
Recall that PGiPN finally reduces to an inexact projected regularized Newton method for solving \eqref{def-psi}. From Lemma \ref{well-defined} (iii) and Lemma \ref{unit-step}, we have 
\begin{equation}\label{inexact-cond}
    \Theta_k(x^{k+1}) - \Theta_k(x^k) \leq 0\ \ {\rm and}\  \ \|R_k(x^{k+1})\| \leq \frac{1}{2}\min\{\|r_k(x^k)\|, \|r_k(x^k)\|^{1+\varsigma}\},
\end{equation}
for sufficiently large $k$. Let $\Lambda_k^i := G_k^i\!-\!\nabla^2\!f(x^k) - b_1 \|\overline{\mu}_k(x^k -\overline{x}^k)\|^{\sigma} I$, where $G_k^i$ are those in \eqref{rnm_liu}-\eqref{rnm_old}. Under Assumption \ref{ass4}, from \cite[Lemma 4.8]{wu22} and \cite[Lemma 4.4]{liu22} and the fact that $G_k^1 -G_k^2 \succeq 0$, we have for sufficiently large $k$ with $x^k \in \mathbb{B}(x^*, \varepsilon)$,
\begin{equation}\label{first-regterm}
    \max \big\{\lambda_{\min}(\Lambda_k^1), \lambda_{\min}(\Lambda_k^2), \lambda_{\min}(\Lambda_k^3)\big\} = O({\rm dist}(x^k, \mathcal{X}^*)),
\end{equation}
By using \eqref{eq-errorbound}, \eqref{inexact-cond} and \eqref{first-regterm}, and following a proof similar to \cite[Theorem 4.3]{liu22}, we can obtain the following result. 

\begin{theorem}\label{slconverge}
 Let $\{x^k\}_{k\in\mathbb{N}}$ be the sequence generated by Algorithm \ref{hybrid}. Suppose that Assumption \ref{ass-Lip} holds, and that $\{x^k\}_{k\in\mathbb{N}}$ converges to $x^* \in \Gamma(x^0).$ If Assumption \ref{ass4} holds with $q\in(\frac{1}{1+\sigma},1]$ at  $x^*$, then the sequence $\{x^k\}_{k\in\mathbb{N}}$ converges to $x^*$ with the $Q$-superlinear convergence rate at order $q(1\!+\!\sigma)$.
\end{theorem}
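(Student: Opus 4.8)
The plan is to exploit the reduction already in place: by Proposition \ref{prop-supp} the method coincides for $k>\overline{k}$ with the inexact projected regularized Newton iteration for \eqref{def-psi}, and by Lemma \ref{unit-step} the unit step is eventually accepted, so $x^{k+1}=x^k+d^k$. Throughout I would abbreviate $e_k:={\rm dist}(x^k,\mathcal{X}^*)$, $\rho_k:=\|r_k(x^k)\|$, and set $z^k:={\rm proj}_{\mathcal{X}^*}(x^k)$, so that $\|x^k-z^k\|=e_k$, $z^k\in\Pi_*$ and $-\nabla\!f(z^k)\in\mathcal{N}_{\Pi_*}(z^k)$. Since $r_k(z^k)=0$ and $r_k$ is Lipschitz on the relevant compact set, $\rho_k=O(e_k)$, which together with \eqref{eq-errorbound} traps $\rho_k$ between $e_k$ and $e_k^{1/q}$ up to constants. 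The two quantitative ingredients I would isolate are a linear step bound $\|d^k\|=O(e_k)$ and a higher-order residual bound $\|r_k(x^{k+1})\|=O(\|d^k\|^{1+\sigma})$; chaining them with \eqref{eq-errorbound} at $x^{k+1}$ yields the one-step contraction $e_{k+1}=O(e_k^{q(1+\sigma)})$ of the distance to $\mathcal{X}^*$.

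For the step bound I would compare the inexact optimality inclusion of the subproblem with the stationarity of $z^k$. Writing $R:=R_k(x^{k+1})$ and $w^k:=x^{k+1}-\overline{\mu}_k^{-1}R={\rm proj}_{\Pi_*}(\cdots)\in\Pi_*$, the defining relation of $R_k$ gives $R-G_kd^k-\nabla\!f(x^k)\in\mathcal{N}_{\Pi_*}(w^k)$, while $-\nabla\!f(z^k)\in\mathcal{N}_{\Pi_*}(z^k)$. Applying monotonicity of $\mathcal{N}_{\Pi_*}$ to these inclusions, together with the Taylor expansion $\nabla\!f(z^k)-\nabla\!f(x^k)=-\nabla^2\!f(x^k)(x^k-z^k)+\tau_k$ with $\tau_k=O(e_k^2)$ (Assumption \ref{ass-Lip}) and the splitting $G_k=\nabla^2\!f(x^k)+\Lambda_k+b_1\rho_k^{\sigma}I$ (with $\rho_k=\|\overline{\mu}_k(x^k-\overline{x}^k)\|$ by \eqref{Eq-KKTprox} and $\Lambda_k$ the term controlled in \eqref{first-regterm}), produces a quadratic inequality in $\Delta:=x^{k+1}-z^k$ of the form $b_1\rho_k^{\sigma}\|\Delta\|^2\le C\|\Delta\|(\|R\|+\|E\|)+\overline{\mu}_k^{-1}\|E\|\|R\|$, where $E:=R+\Lambda_k(x^k-z^k)+b_1\rho_k^{\sigma}(x^k-z^k)+\tau_k$ and the leftmost coercivity uses $G_k\succeq b_1\rho_k^{\sigma}I$ from \eqref{con-Gk}. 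Dividing by $b_1\rho_k^{\sigma}$ and invoking \eqref{first-regterm} (so $\|\Lambda_k\|_2=O(e_k)$), the inexactness \eqref{inexact-cond} (so $\|R\|=O(\rho_k^{1+\varsigma})$), and the hypotheses $q>\sigma$, $\varsigma>\sigma$, every term on the right is $O(e_k)$, the dominant one being $b_1\rho_k^{\sigma}\,e_k/(b_1\rho_k^{\sigma})=e_k$. Hence $\|x^{k+1}-z^k\|=O(e_k)$ and therefore $\|d^k\|\le\|x^{k+1}-z^k\|+e_k=O(e_k)$.

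For the residual bound I would use nonexpansiveness of ${\rm proj}_{\Pi_*}$ to get $\|r_k(x^{k+1})-R_k(x^{k+1})\|\le\|\nabla\!f(x^{k+1})-\nabla\!f(x^k)-G_kd^k\|$, expand $\nabla\!f(x^{k+1})-\nabla\!f(x^k)=\nabla^2\!f(x^k)d^k+O(\|d^k\|^2)$, and bound the three leftover pieces by $O(\|d^k\|^2)$, $\|\Lambda_k\|_2\|d^k\|=O(e_k\|d^k\|)$ and $b_1\rho_k^{\sigma}\|d^k\|$; with $\|d^k\|=O(e_k)$ and $\rho_k=O(e_k)$ the last piece $O(e_k^{1+\sigma})$ dominates, while $\|R_k(x^{k+1})\|=O(\rho_k^{1+\varsigma})=O(e_k^{1+\varsigma})$ is of strictly higher order since $\varsigma>\sigma$. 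This gives $\|r_k(x^{k+1})\|=O(e_k^{1+\sigma})$, and substituting into \eqref{eq-errorbound} yields $e_{k+1}=O(e_k^{q(1+\sigma)})$ with $q(1+\sigma)>1$. To upgrade this contraction of ${\rm dist}(\cdot,\mathcal{X}^*)$ to $Q$-superlinear convergence to $x^*$, I would note that $e_{k+1}/e_k\le Ce_k^{q(1+\sigma)-1}\to0$ forces $\sum_{j\ge k}e_j=O(e_k)$, whence $\|x^k-x^*\|\le\sum_{j\ge k}\|d^j\|=O\!\big(\sum_{j\ge k}e_j\big)=O(e_k)$; since also $e_k\le\|x^k-x^*\|$, we obtain $\|x^k-x^*\|\asymp e_k$ and thus $\|x^{k+1}-x^*\|=O(e_{k+1})=O(e_k^{q(1+\sigma)})=O(\|x^k-x^*\|^{q(1+\sigma)})$.

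The main obstacle is the step bound $\|d^k\|=O(e_k)$. Because the problem is genuinely degenerate — at points of $\mathcal{X}^*$ the Hessian is only positive semidefinite, so the positive definiteness of $G_k$ comes solely from the vanishing regularization $b_1\rho_k^{\sigma}I$ — the naive bound $\|d^k\|=O(\rho_k^{1-\sigma})$ from Proposition \ref{xk-prop}(iii) is too lossy and would ruin the rate. The delicate point is that the same factor $\rho_k^{\sigma}$ that weakens the coercivity on the left of the quadratic inequality also appears inside $E$ on the right, and the two occurrences cancel to leave exactly an $O(e_k)$ bound; making this cancellation rigorous, and verifying that all remaining contributions are genuinely higher order, is precisely where the conditions $q>\tfrac{1}{1+\sigma}$, $q>\sigma$ and $\varsigma>\sigma$ are consumed. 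This is the core of the argument paralleling \cite[Theorem 4.3]{liu22}.
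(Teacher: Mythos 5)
Your proposal is correct and takes essentially the same route as the paper: the paper's own proof is a one-line deferral to \cite[Theorem 4.3]{liu22} built on exactly the three ingredients \eqref{eq-errorbound}, \eqref{inexact-cond} and \eqref{first-regterm} that you combine, and your reconstruction (step bound $\|d^k\|=O({\rm dist}(x^k,\mathcal{X}^*))$ via monotonicity of $\mathcal{N}_{\Pi_*}$ with the $\rho_k^{\sigma}$ cancellation, the higher-order residual bound, and chaining through the H\"olderian error bound) is precisely the intended argument. The only cosmetic liberties are that you read \eqref{first-regterm} as a spectral-norm bound on $\Lambda_k$ (which is what the cited lemmas of \cite{wu22} and \cite{liu22} actually supply, the displayed $\lambda_{\min}$ notwithstanding) and that you apply the error bound at $x^{k+1}$ with $r_k$ in place of $r_{k+1}$, a substitution the paper itself licenses via \cite[Lemma 4]{sra12} and the uniform bounds on $\overline{\mu}_k$.
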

	
  \section{Numerical experiments}\label{sec7}
  
    This section focuses on the numerical experiments of several variants of PGiPN for solving a fused $\ell_0$-norms regularization problem with a box constraint. We first describe the implementation of Algorithm \ref{hybrid} in Section \ref{sec7.1}. In Section \ref{sec7.2}, we make comparison between model \eqref{model} with the least-squares loss function $f$ and the fused Lasso model \eqref{fused-lasso-Lag} by using PGiPN to solver the former and SSNAL \cite{li18b} to solve the latter, to highlight the advantages and disadvantages of our proposed fused $\ell_0$-norms regularization. 
    Among others, the code of SSNAL is available at \url{https://github.com/MatOpt/SuiteLasso}). 
    Finally, in Section \ref{sec7.3}, we present some numerical results toward the comparison among several variants of PGiPN and ZeroFPR and PG method for \eqref{model} in terms of efficiency and the quality of the output. The MATLAB code of PGiPN is available at \url{https://github.com/yuqiawu/PGiPN}.
	\subsection{Implementation of Algorithm \ref{hybrid}} \label{sec7.1}
	\subsubsection{Dimension reduction of \eqref{subp}}\label{sec7.1.1}
	Suppose that $\emptyset\neq S_k^c:=[n]\backslash S_k$. Based on the fact that every $x\in \Pi_k$ satisfies $x_{S_k^c} = 0$, we can obtain an approximate solution to \eqref{subp} by solving a problem in a lower dimension.  Specifically, for each $k\in\mathcal{K}_2$, write
	\[
		H_k\!:=\!(G_k)_{S_kS_k},\, v^k\!:=\!x^k_{S_k},\,\nabla\! f_{S_k}(v^k) \!=\! [\nabla\! f(x^k)]_{S_k},\, \widehat{\Pi}_k\!:=\{v\in\mathbb{R}^{\lvert S_k\rvert} \ \lvert \ \widetilde{B}_kv = 0, \, l_{S_k}\!\leq \!v \!\leq\! u_{S_k}\},
	\]
	where $\widetilde{B}_k$ is the matrix obtained by removing the rows of $B_{T_k^cS_k}$ whose elements are all zero. We turn to consider the following strongly convex optimization problem,
	\begin{equation}\label{reduce_pn}
		\widehat{v}^k \approx\mathop{\arg\min}_{v\in\mathbb{R}^{\lvert S_k\rvert}}  
		\Big\{\theta_k(v) := f(I_{\cdot S_k} v^k) +\langle\nabla\!f_{S_k}(v^k),v-\!v^k\rangle
		+\frac{1}{2}(v\!-v^k)^{\top}H_k(v\!-\!v^k) + \delta_{\widehat{\Pi}_k}(v)\Big\}.
	\end{equation}
	The following lemma gives a way to find $y^k$ satisfying \eqref{inexact-cond1}-\eqref{inexact-cond2} by inexactly solving problem \eqref{reduce_pn}, whose dimension is much smaller than that of \eqref{subp} if $\lvert S_k \rvert \ll n$. 
	\begin{lemma}\label{lemma-subspace}
		Let $y^k_{S_k} =\widehat{v}^k$ and $y^k_{S_k^c} = 0$. Then, $\Theta_k(y^k) = \theta_k(\widehat{v}^k)$ and ${\rm dist}(0, \partial \Theta_k(y^k)) = {\rm dist}(0, \partial \theta_k(\widehat{v}^k))$.
		Consequently, the vector $\widehat{v}^k$ satisfies $$\theta_k(\widehat{v}^k) - \theta_k(v^k) \leq 0,  \ 
			{\rm dist}(0, \partial \theta_k(\widehat{v}^k)) 
			\le\frac{\min \{\overline{\mu}_k^{-1}, 1\}}{2}\min \left\{\|\overline{\mu}_k(x^k\!-\!\overline{x}^k)\|, \|\overline{\mu}_k(x^k\!-\!\overline{x}^k)\|^{1+\varsigma} \right\},
		$$
	if and only if the vector $y^k$ satisfies the inexact conditions in \eqref{inexact-cond1}-\eqref{inexact-cond2}.
	\end{lemma}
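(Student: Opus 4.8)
The plan is to exploit the one-to-one correspondence between feasible points of \eqref{subp} and \eqref{reduce_pn} induced by the support structure, and then to reduce both the objective value and the subdifferential distance along this correspondence. First I would record the reduced description of the feasible set. Since $k\in\mathcal{K}_2$, we have $\Pi_k=\Pi(x^k)$ with $S_k={\rm supp}(x^k)$ and $T_k={\rm supp}(Bx^k)$, so every $x\in\Pi_k$ satisfies $x_{S_k^c}=0$. For such $x$, writing $v:=x_{S_k}$ gives $x=I_{\cdot S_k}v$, whence $(Bx)_{T_k^c}=B_{T_k^cS_k}v=0$ is equivalent to $\widetilde{B}_kv=0$ (the removed zero rows of $B_{T_k^cS_k}$ contribute only trivial identities), while $l\le x\le u$ splits into $l_{S_k}\le v\le u_{S_k}$ together with $l_{S_k^c}\le 0\le u_{S_k^c}$, the latter holding automatically because $l\le 0\le u$. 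Hence $\Pi_k=\{x\ \lvert\ x_{S_k^c}=0,\ x_{S_k}\in\widehat{\Pi}_k\}$, which after reordering coordinates is exactly the product $\widehat{\Pi}_k\times\{0_{S_k^c}\}$; in particular $y^k\in\Pi_k$.

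For the value equality I would compute $\Theta_k(y^k)$ term by term. Both $x^k$ and $y^k$ are supported on $S_k$, so $w:=y^k-x^k$ satisfies $w_{S_k}=\widehat{v}^k-v^k$ and $w_{S_k^c}=0$. Then $f(x^k)=f(I_{\cdot S_k}v^k)$, the linear term equals $\langle[\nabla f(x^k)]_{S_k},\widehat{v}^k-v^k\rangle=\langle\nabla f_{S_k}(v^k),\widehat{v}^k-v^k\rangle$, and the quadratic term collapses to $\tfrac12(\widehat{v}^k-v^k)^{\top}(G_k)_{S_kS_k}(\widehat{v}^k-v^k)=\tfrac12(\widehat{v}^k-v^k)^{\top}H_k(\widehat{v}^k-v^k)$ precisely because $w$ is supported on $S_k$. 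Together with $\delta_{\Pi_k}(y^k)=\delta_{\widehat{\Pi}_k}(\widehat{v}^k)=0$ this yields $\Theta_k(y^k)=\theta_k(\widehat{v}^k)$, and the special case $y^k=x^k$ (i.e. $\widehat{v}^k=v^k$) gives $\Theta_k(x^k)=\theta_k(v^k)=f(x^k)$.

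The distance equality is the part requiring care, since $G_k$ has off-diagonal blocks $(G_k)_{S_k^cS_k}$ that feed into the $S_k^c$ entries of $G_kw$. Using the smooth-plus-indicator sum rule, $\partial\Theta_k(y^k)=\nabla f(x^k)+G_kw+\mathcal{N}_{\Pi_k}(y^k)$ and $\partial\theta_k(\widehat{v}^k)=\nabla f_{S_k}(v^k)+H_k(\widehat{v}^k-v^k)+\mathcal{N}_{\widehat{\Pi}_k}(\widehat{v}^k)$. From the product structure $\Pi_k\cong\widehat{\Pi}_k\times\{0_{S_k^c}\}$ the normal cone factorizes, so $\zeta\in\mathcal{N}_{\Pi_k}(y^k)$ iff $\zeta_{S_k}\in\mathcal{N}_{\widehat{\Pi}_k}(\widehat{v}^k)$ and $\zeta_{S_k^c}$ is free. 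Restricting any $\eta\in\partial\Theta_k(y^k)$ to $S_k$ gives exactly $\partial\theta_k(\widehat{v}^k)$, since $(G_kw)_{S_k}=H_k(\widehat{v}^k-v^k)$, while its $S_k^c$ block $[\nabla f(x^k)]_{S_k^c}+(G_kw)_{S_k^c}+\zeta_{S_k^c}$ ranges over all of $\mathbb{R}^{\lvert S_k^c\rvert}$ as $\zeta_{S_k^c}$ varies, independently of the $S_k$ block. Hence $\partial\Theta_k(y^k)=\partial\theta_k(\widehat{v}^k)\times\mathbb{R}^{\lvert S_k^c\rvert}$, and minimizing $\|\eta\|^2=\|\eta_{S_k}\|^2+\|\eta_{S_k^c}\|^2$ by taking $\eta_{S_k^c}=0$ gives ${\rm dist}(0,\partial\Theta_k(y^k))={\rm dist}(0,\partial\theta_k(\widehat{v}^k))$.

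Finally, substituting $\Theta_k(y^k)=\theta_k(\widehat{v}^k)$, $\Theta_k(x^k)=\theta_k(v^k)$ and the distance equality into \eqref{inexact-cond1}--\eqref{inexact-cond2} shows at once that $y^k$ obeys the two inexact criteria if and only if $\widehat{v}^k$ obeys the two displayed conditions, the right-hand bounds being literally identical. The main obstacle is the bookkeeping of the third paragraph: one must verify that the off-diagonal coupling in $G_k$ leaves the distance unchanged, which is exactly why the unrestricted $S_k^c$-component of $\mathcal{N}_{\Pi_k}(y^k)$ is indispensable.
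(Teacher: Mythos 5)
Your proposal is correct and follows essentially the same route as the paper: the value identity is a direct term-by-term computation using ${\rm supp}(y^k-x^k)\subset S_k$, and the distance identity rests on the observation that the $S_k^c$-components of $\partial\Theta_k(y^k)$ are unconstrained (the paper expresses this via ${\rm Range}(I^{\top}_{S_k^c\cdot})=\{z\ \lvert\ z_{S_k}=0\}$ inside the polyhedral normal-cone decomposition, you via the product factorization $\mathcal{N}_{\Pi_k}(y^k)=\mathcal{N}_{\widehat{\Pi}_k}(\widehat{v}^k)\times\mathbb{R}^{\lvert S_k^c\rvert}$, which is the same fact). Your version also fills in the details the paper labels ``straightforward,'' including the verification that $l_{S_k^c}\le 0\le u_{S_k^c}$ makes the reduced feasible set a genuine product.
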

	\begin{proof}
		The first part is straightforward. We consider the second part. By the definition of $\Theta_k$, 
		${\rm dist}(0, \partial \Theta_k(y^k)) = {\rm dist}(0, \nabla f(x^k) + G_k (y^k -x^k) + \mathcal{N}_{\Pi_k}(y^k))$.
		Recall that $\Pi_k= \{x\in\Omega\ \lvert \ B_{T_k^c\cdot} x = 0, x_{S_k^c} = 0\}$. Then, $\mathcal{N}_{\Pi_k}(y^k) = {\rm Range}(B_{T_k^c\cdot}^{\top}) + {\rm Range}(I^{\top}_{S_k^c\cdot}) + \mathcal{N}_{\Omega}(y^k)$, and 
		\begin{align*}
		{\rm dist}(0,\partial \Theta_k(y^k)) 
			& = {\rm dist}\big(0, \nabla f(x^k) + G_k (y^k -x^k) + {\rm Range}(B_{T_k^c\cdot}^{\top}) + {\rm Range}(I^{\top}_{S_k^c\cdot}) + \mathcal{N}_{\Omega}(y^k)\big)\\
            & = {\rm dist}\big(0, \nabla f_{S_k}(v^k) + H_k(\widehat{v}^k - v^k) + {\rm Range}(B_{T_k^cS_k}^{\top}) + \mathcal{N}_{[l_{S_k},u_{S_k}]}(\widehat{v}^k)\big)\\
			& = {\rm dist}(0, \nabla f_{S_k}(v^k) + H_k(\widehat{v}^k - v^k) + \mathcal{N}_{\widehat{\Pi}_k}(\widehat{v}^k)) = {\rm dist}(0,\theta_k(\widehat{v}^k)),
		\end{align*}
        where the second equality is using ${\rm Range}(I^{\top}_{S_k^c\cdot})= \{z\in\mathbb{R}^n\ | \ z_{S_k} = 0$\}. 
	\end{proof}
	\subsubsection{Acceleration of Algorithm 1}\label{sec7.1.3}
	
	The switch condition in \eqref{switch-condition} is in general difficult to be satisfied when $\|Bx^k\|_0$ or $ \|x^k\|_0$ is large. Consequently, PGiPN is continuously executing PG steps. This phenomenon is evident in the numerical experiment of the restoration of blurred image, see Section \ref{sec7.3.2}. To further accelerate the iterates of Algorithm \ref{hybrid} into the Newton step, we introduce the following relaxed switch condition:
	\begin{equation}\label{relax-condition}
		\|\lvert {\rm sign}(Bx^k)\rvert - \lvert{\rm sign}(B\overline{x}^k)\rvert\|_1 \leq \frac{\eta_1 n}{k}\ \ {\rm and} \ \  \|\lvert{\rm sign}(x^k) \rvert-\lvert {\rm     sign}(\overline{x}^k)\rvert\|_1 \leq \frac{\eta_2 n}{k},
	\end{equation}
	where $\eta_1,\eta_2$ are two nonnegative constants. Following the arguments similar to those in Lemma \ref{well-defined}, we have that Algorithm \ref{hybrid} equipped with \eqref{relax-condition} is also well defined. Obviously, when $\frac{\eta_i n}{k} \geq 1$, condition \eqref{relax-condition} allows the supports of $Bx^k$ and $B\overline{x}^k$ and $x^k$ and $\overline{x}^k$ have some difference; when $\frac{\eta_i n}{k} < 1 (i=1,2)$, condition \eqref{relax-condition} is identical to \eqref{switch-condition}. This means that as $k$ grows, Algorithm \ref{hybrid} with relaxed switch condition \eqref{relax-condition} will finally reduce to the one with \eqref{switch-condition}.  Since our convergence analysis does not specify the initial point, the asymptotic convergence results also hold for Algorithm \ref{hybrid} with condition \eqref{relax-condition}.
	\subsubsection{Choice of parameters in Algorithm \ref{hybrid}}\label{sec7.1.4}

    We will test the performance of PGiPN with $G_k$ given by $G_k^2$ in \eqref{rnm_new}, and PGiPN(r), which is PGiPN with relaxed switch condition \eqref{relax-condition}. We use Gurobi to solve subproblem \eqref{subp} with such $G_k$, with inexact conditions \eqref{inexact-cond1}, \eqref{inexact-cond2} controlled by options \textsf{params.Cutoff} and
    \textsf{params.OptimalityTol}, respectively.
    Also, we test PGilbfgs, which is the same as PGiPN, except using limited-memory BFGS (lbfgs) to construct $G_k$. In particular, we form $G_k = B_k+b_1\|\overline{\mu}_k(x^k -\overline{x}^k)\|^{\sigma}$, with $B_k$ given by lbfgs. For solving \eqref{subp} with such $G_k$, we use the method introduced in \cite{kanzow22}. We set the parameters of all the variants of PGiPN by
	\( \alpha=10^{-8},\
	\sigma = \frac{1}{2},\ \varrho = 10^{-4},\  \beta = \frac{1}{2},\ \varsigma = \frac{2}{3}.
	\)
	We set $b_1= 10^{-3}$ for PGiPN and PGiPN(r), and $b_1 = 10^{-8}$ for PGilbfgs. 

    We compare the numerical performance of our algorithms with those of ZeroFPR \cite{Themelis18} and the PG method \cite{Wright09}. In particular, ZeroFPR uses the quasi-Newton method to minimize the forward-backward envelope of the objective. The code package of ZeroFPR is downloaded from \url{http://github.com/kul-forbes/ForBES}. We set ``lbfgs'' as the solver of ZeroFPR. On the other hand, the iterate steps of PG are the same as those of PGiPN without the Newton steps, so that we can check the effect of the additional second-order step on PGiPN. For this reason, the parameters of PG are chosen to be the same as those involved in PG Step of PGiPN. We also observe that the sparsity of the output is very sensitive to $\mu_k$ in Algorithm \ref{hybrid}. To be fair, as the default setting in ZeroFPR, in all variants of PGiPN and PG, we set $\mu_k = 0.95^{-1}L_1$ for all $k\in\mathbb{N}$, where $L_1$ is an estimation of the Lipschitz constant of $\nabla\! f$ obtained by computing $\|A\|_2$ from the following MATLAB sentences:

    \noindent
     \textsf{ opt.issym = 1; opt.tol = 0.001; ATAmap = @(x) A'*A*x; L = eigs(ATAmap,n,1,`LM',opt)}.

    For each solver, we set $x^0 = 0$ and terminate at the iterate $x^k$ whenever $k\ge 5000$ or $\overline{\mu}_k\|x^k - {\rm prox}_{\overline{\mu}_k^{-1}g}(x^k- \overline{\mu}_k^{-1}\nabla \! f(x^k))\|_{\infty} < 10^{-4}$. All the numerical tests in this section are conducted on a desktop running on 64-bit Windows System with an Intel(R) Core(TM) i7-10700 CPU 2.90GHz and 32.0 GB RAM.
\subsection{Model comparison with the fused Lasso}\label{sec7.2}
 
 This subsection is devoted to the numerical comparison between the fused $\ell_0$-norms regularization problem with a box constraint (FZNS), i.e., model \eqref{model} with $f = \frac{1}{2}\|A\cdot-b\|^2$ and $B=\widehat{B}$ and the fused Lasso \eqref{fused-lasso-Lag}. We apply PGiPN to solve FZNS and SSNAL to solve \eqref{fused-lasso-Lag}. Since the solved models are different, we only compare the quality of solutions returned by these two solvers, and will not compare their running time.
 
 Our first empirical study focuses on the ability of regression. For this purpose, we use a commonly used dataset, prostate data, which can be downloaded from \url{https://hastie.su.domains/ElemStatLearn/}. There are 97 observations and 9 features included in this dataset. This data was used in \cite{jiang2021} to check the performance of square root fused Lasso. 

 We randomly select 50 observations to form the training set, which composes $A\in\mathbb{R}^{50\times 8}$. The corresponding responses are represented by $b\in\mathbb{R}^{50}$. The reminders are left as testing set, which forms $(\bar{A}, \bar{b})$ with $\bar{A} \in \mathbb{R}^{47\times 8}$ and $\bar{b} \in \mathbb{R}^{47}$. We employ PGiPN to solve FZNS, and SSNAL \cite{li18b} to solve the fused Lasso \eqref{fused-lasso-Lag}, with $(A,b)$ given above, and $l=-1000\times {\bf 1}$, $u = 1000\times{\bf 1}$. For each solver, we select $10$ groups of $(\lambda_1, \lambda_2) \in [0.003, 400] \times [0.0003, 40]$, ensuring that the outputs exhibit different sparsity levels. We record the sparsity and the testing error, where the later one is defined as $\|\bar{A}x^*\!-\!\bar{b}\|$ with $x^*$ being the output. 
 The above procedure is repeated for 100 randomly constructed $(A,b)$, resulting in a total of 1000 recorded outputs for each model. All the sparsity pairs $(\|\widehat{B}x^*\|_0, \|x^*\|_0)$ from PGiPN and SSNAL are recorded in lines 1, 3 and 5 in Table \ref{table-prostate}. For each sparsity pair, the mean testing errors of $\|\bar{A}x^*\!-\!\bar{b}\|$ for PGiPN and SSNAL corresponding to the given pair is recorded in lines 2, 4 and 6 in Table \ref{table-prostate}.
 Among others, since the fused Lasso may produce solutions with components being very small but not equal to $0$, we define $\|y\|_0 := \min\{k\ | \ \sum_{i=1}^k |\hat{y}| \geq 0.999\|y\|_1\}$ as in \cite{li18b}, where $\hat{y}$ is obtained by sorting $y$ in a nonincreasing order, for the outputs of the fused Lasso. As shown in Table \ref{table-prostate},
 it is evident that when $(\|\widehat{B}x^*\|_0, \|x^*\|_0) = (6,6)$, the mean testing error for FZNS is the smallest among all the testing examples. Furthermore,
 in the presented 21 comparative experiments, the fused Lasso outperforms FZNS for only 8 cases. Among these 8 experiments, in 7 cases, $\|\widehat{B}x^*\|_0 \geq 4$ and $\|x^*\|_0 \geq 6$. This indicates that our model performs better when the solution is relatively sparse. 

 \begin{table}[!ht]
	\centering
    \footnotesize
	\caption{Mean testing error (FZNS$|$Fused Lasso) of the outputs.\label{table-prostate}}
	\begin{tabular}{c|ccccccc}
		\toprule
		($\|\widehat{B}x^*\|_0, \|x^*\|_0$)   & (2,1) & (3,2) & (3,3) &(3,4) & (3,5) & (3,6) & (3,8) \\
        \midrule
        Mean testing error & {\color{blue}8.35}$|$8.54 & {\color{blue}7.34}$|$7.36 & 5.45$|${\color{blue}5.15} & {\color{blue}5.15}$|$5.74 & {\color{blue}5.21}$|$6.32 & {\color{blue}5.08}$|$5.27 & {\color{blue}5.11}$|$5.70\\
        \midrule
        ($\|\widehat{B}x^*\|_0, \|x^*\|_0$) & (4,4) & (4,5)& (4,6)& (4,7)& (4,8) & (5,5) & (5,6) \\
        \midrule
        Mean testing error & {\color{blue}5.07}$|$5.52 & {\color{blue}5.31}$|$5.86 & 5.49$|${\color{blue}4.99} & 5.25$|${\color{blue}4.97} & 5.33$|${\color{blue}4.78} & {\color{blue}5.10}$|$5.48 & 5.74$|${\color{blue}5.38}\\
        \midrule
        ($\|\widehat{B}x^*\|_0, \|x^*\|_0$) & (5,7) & (5,8) & (6,6) & (6,7) & (6,8) & (7,7) & (7,8) \\
        \midrule
        Mean testing error & {\color{blue}5.46}$|$5.58 & 5.35$|${\color{blue} 5.19} & {\color{blue}4.41}$|$ 5.26 & 5.34$|${\color{blue}4.95} & {\color{blue}5.25}$|$5.34 & {\color{blue}5.03}$|$5.22 & 5.24$|${\color{blue}5.22}\\
        \bottomrule
	\end{tabular}
\end{table}
 
Our second numerical study is to evaluate the classification ability of the two models using the TIMIT database. As introduced in Section \ref{sec1}, the TIMIT database is a widely used resource for research in speech recognition. Following the approach described in \cite{land97}, we compute a log-periodogram from each speech frame, which is one of the several widely used methods to generate speech data in a form suitable for speech recognition. Consequently, the dataset comprises 4509 log-periodograms of length 256 (frequency). It was highlighted in \cite{land97} that distinguishing between ``aa'' and ``ao'' is particularly challenging. Our aim is to classify these sounds using FZNS and the fused Lasso with $\lambda_2 = 0,\ l=-{\bf 1}$ and $u = {\bf 1}$, or in other words, the zero order variable fusion \eqref{fused-L0} plus a box constraint and the first order variable fusion \eqref{fused-L1}.

In TIMIT, the numbers of phonemes labeled ``aa'' and ``ao'' are 695 and 1022, respectively. As in \cite{land97}, we use the first 150 frequencies of the log-periodograms because the remaining 106 frequencies do not appear to contain any information. We randomly select $m_1$ samples labeled ``aa'' and $m_2$ samples labeled ``ao'' as training set, which together with their labels form $A\in\mathbb{R}^{m\times n}$ and $b\in\mathbb{R}^m$, with $m = m_1+m_2,\ n= 150$, where $b_i = 1$ if $A_{i\cdot}$ is labeled as ``aa'', and $b_i=2$ otherwise. The rest of dataset is left as the testing set, which forms $\bar{A}\in\mathbb{R}^{(1717-m)\times n}, \bar{b}^{1717-m}$, with $\bar{b}_i = 1$ if $\bar{A}_{i\cdot}$ is labeled as ``aa'' and $\bar{b}_i = 2$ otherwise. For $(A,b)$, given 10 $\lambda_1$'s randomly selected within $[2\times 10^{-5}, 300]$ such that the sparsity of the outputs $\|\widehat{B}x^*\|_0$ spans a wide range. If $\bar{A}_{i\cdot}x^* \leq 1.5$, this phoneme is classified as ``aa'' and hence we set $\hat{b}_i = 1$; otherwise, $\hat{b}_i = 2$. If $\hat{b}_i \neq \bar{b}_i$, $A_{i\cdot}$ is regarded as failure in classification. Then the error rate of classification is given by $\frac{\|\bar{b}-\hat{b}\|_1}{1717-m}$. We record both $\|\widehat{B}x^*\|_0$ and the error rate of classification.

The above procedure is repeated for $30$ groups of randomly generated $(A,b)$, resulting in 300 outputs for each solver. 
The four figures in Figure \ref{fig_phoneme} present $\|\widehat{B}x^*\|_0$ and the error rate for each output, with 4 different choices of $(m_1, m_2)$. We can see that, for each figure the output with the smallest error rate is always achieved by the fused $\ell_0$-norms regularization model. It is apparent that in general, FZNS performs better than the fused Lasso when $\|\widehat{B}x^*\|_0\leq 30$, while the mean error rate of the fused Lasso is lower than that of FZNS when $\|\widehat{B}x^*\|_0\geq 60$. This phenomenon is especially evident when $m_1$ and $m_2$ are small.
   \begin{figure}[h]
	\centering
	\setlength{\abovecaptionskip}{2pt}
	\subfigure[$m_1 = 25, m_2 = 50$]{
		\includegraphics[width=0.45\linewidth]{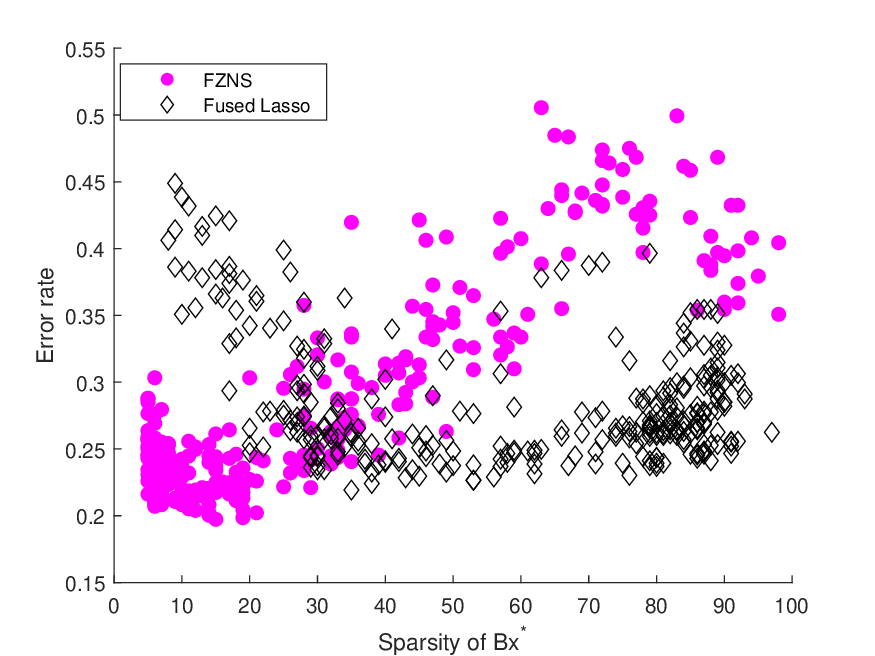}
		\label{scatter25}}
	\subfigure[$m_1 = 50, m_2 = 100$]{
		\includegraphics[width=0.45\linewidth]{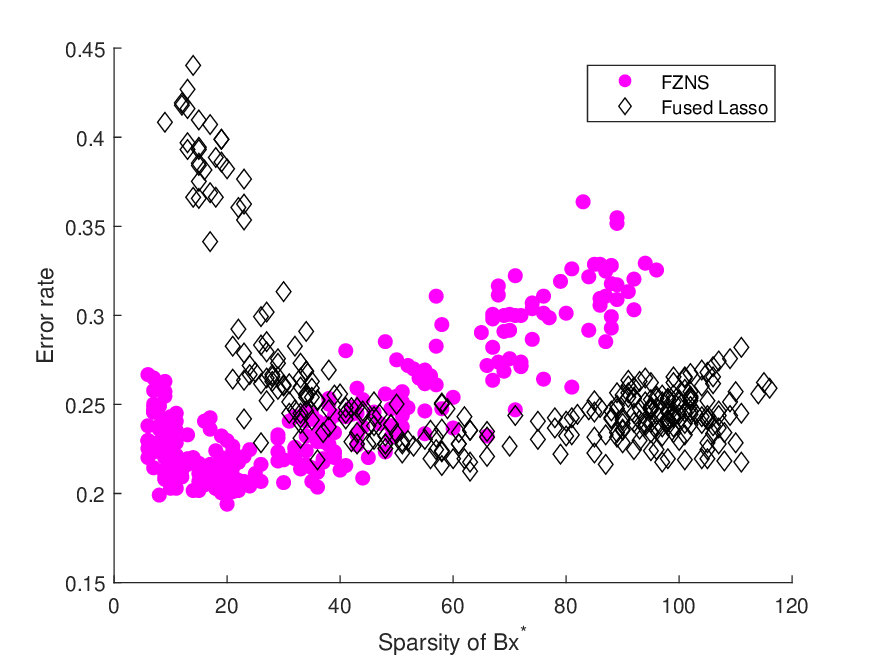}
		\label{scatter50}}
    \subfigure[$m_1=100, m_2 = 200$]{
		\includegraphics[width=0.45\linewidth]{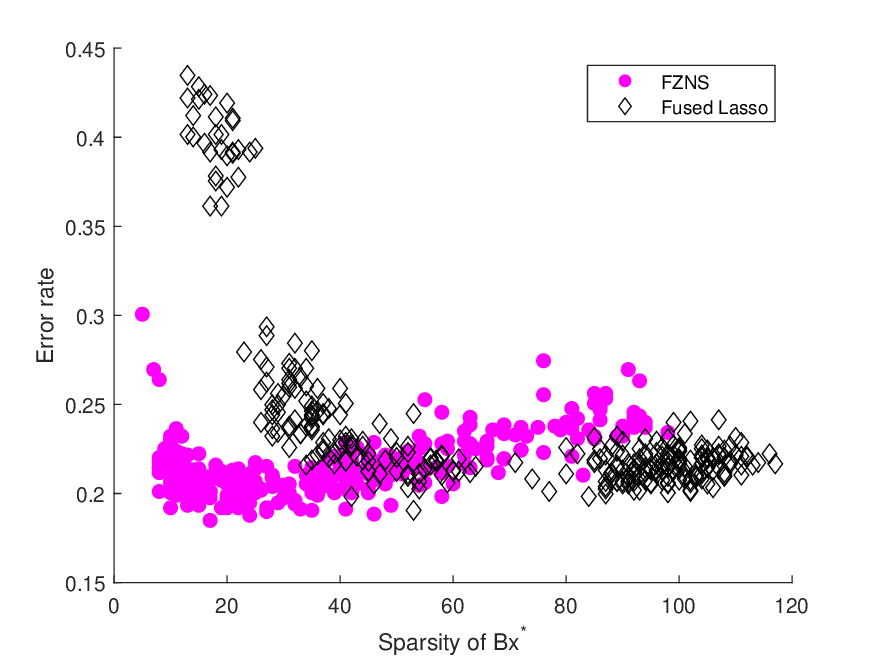}
		\label{scatter100}}
	\subfigure[$m_1=200, m_2 = 400$]{
		\includegraphics[width=0.45\linewidth]{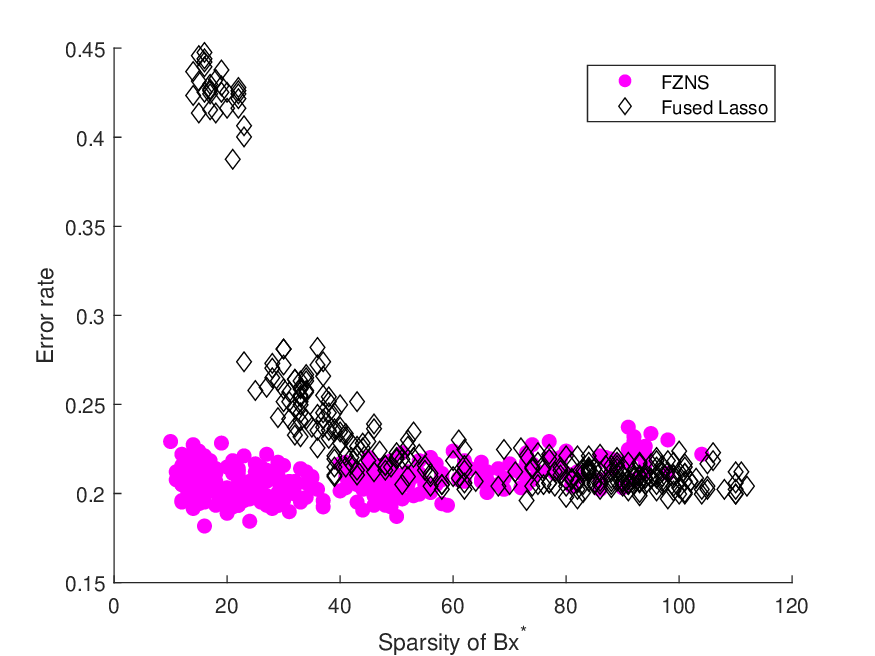}
		\label{scatter200}}
	\caption{$\|\widehat{B}x^*\|_0$ and the classification error rate for the outputs from FZNS and the fused Lasso under different $m_1, m_2$.}
	\label{fig_phoneme}
\end{figure}

Based on the results of these two empirical studies, we deduce that the fused $\ell_0$-norms regularization tends to outperform the fused Lasso regularization model when the output is sufficiently sparse. 
However, it is important to note that the numerical performance of the fused $\ell_0$-norms regularization is not stable if the output is not sparse, especially when the number of observations is small, which suggests that when employing the fused $\ell_0$-norms regularization, careful consideration should be given to selecting an appropriate penalty parameter. Moreover, due to the fact that for some optimal solution $x^*$ of the fused Lasso regularization problem, $|\widehat{B}x^*|_{\min}$ and $|x^*|_{\min}$ may be very small but not equal to zero, which leads to a difficulty in interpreting what the outputs mean in the real world application. This also well matches the statements in \cite{land97} that the $\ell_0$-norm variable fusion produces simpler estimated coefficient vectors.

\subsection{Comparison with ZeroFPR and PG}\label{sec7.3}
 This subsection focuses on the comparison among several variants of PGiPN, ZeroFPR and PG, in terms of efficiency and the quality of the outputs.
 \subsubsection{Classification of TIMIT\label{sec7.3.1}}
 The experimental data used in this part is the TIMIT dataset, the one in Section 6.2. To test the performance of the algorithms on \eqref{model} with nonconvex $f$, we consider solving model \eqref{model} with $f = \sum_{i=1}^m \ \log \Big(1+\frac{(A\cdot-b)_i}{\nu}\Big)$, $B = \widehat{B}$, $l = -{\bf 1}$ and $u = {\bf 1}$, 
 where $A\in\mathbb{R}^{m\times n}$ represents the training data and $b\in\mathbb{R}^m$ is the vector of the corresponding labels. It is worth noting that the loss function is nonconvex, and it was claimed in \cite{aravkin12} that this loss function is effective to process data denoised by heavy-tailed Student's $t$-noise. 

 Following the approach in Section \ref{sec7.2}, we use the first 150 frequencies of the log-periodograms. For the training set, we arbitrarily selecte 200 samples labeled as ``aa'' and 400 samples labeled as ``ao''. These samples, along with their corresponding labels, form the matrices $A\in\mathbb{R}^{m\times n}$ and $b\in\mathbb{R}^m$, with dimensions $m = 600$ and $n = 150$. The remaining samples are designated as the testing set. Given a series of nonnegative $\lambda_c$, we set $\lambda_1 = \lambda_c \times 10^{-7} \|A^{\top}b\|_{\infty}$ and $\lambda_2 = 0.1\lambda_1$. We employ four solvers: PGiPN, PGilbfgs, PG, and ZeroFPR. Subsequently, we record the CPU time and the error rate of classification on the testing set. This experimental procedure is repeated for a total of 30 groups of $(A,b)$, and the mean CPU time and error rate are recorded for each $\lambda_c$, presented in Figure \ref{figure-phonemewithzeroforandpg1}. Motivated by the experiment in Section \ref{sec7.2}, we also draw Figure \ref{figure-phonemewithzeroforandpg2}, recording $\|\widehat{B}x^*\|_0$ and the error rate for all the tested cases for four solvers.

\begin{figure}[ht]
	\centering
	\setlength{\abovecaptionskip}{2pt}
	\subfigure[$\lambda_c$-$\log({\rm time(seconds)})$ plot]{\label{lam_time}
		\includegraphics[width=0.45\linewidth]{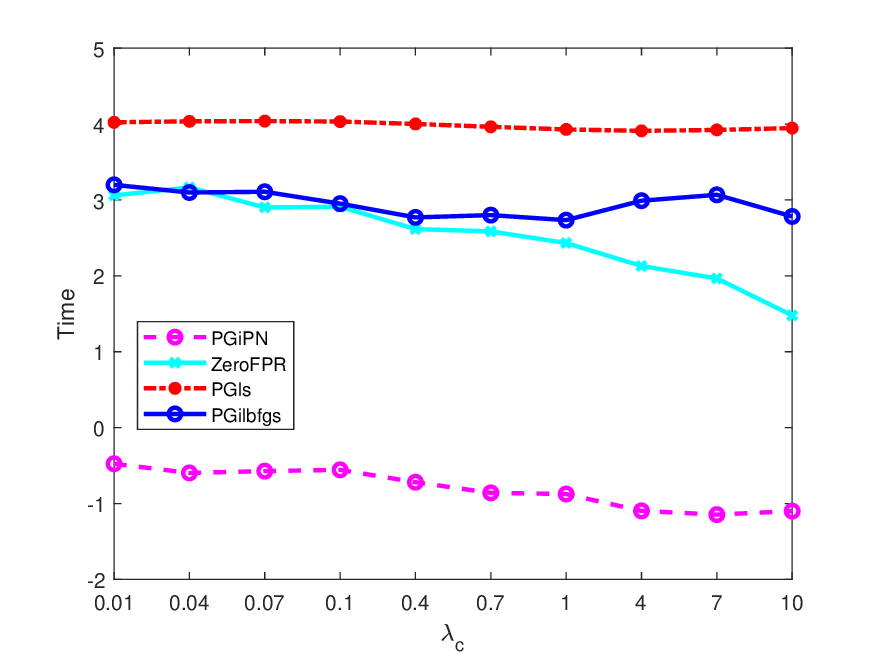}
		}
	\subfigure[$\lambda_c$-error rate plot]{\label{lam_err_rate}
		\includegraphics[width=0.45\linewidth]{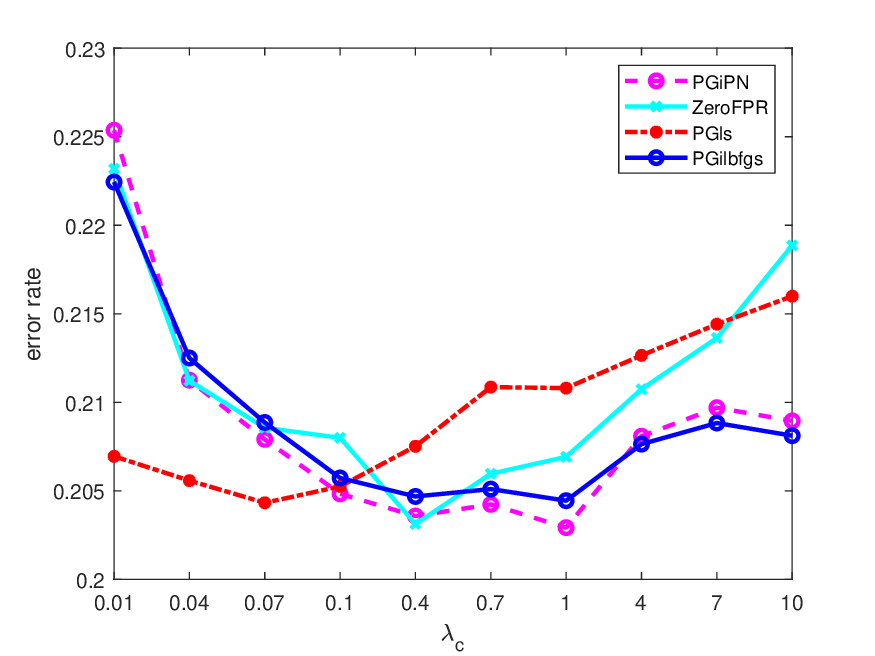}
		}
	\caption{Mean of the cpu time and the error rate on 30 examples for four solvers\label{figure-phonemewithzeroforandpg1}}
\end{figure}
\begin{figure}[ht]
	\centering
	\includegraphics[width=0.8\textwidth]{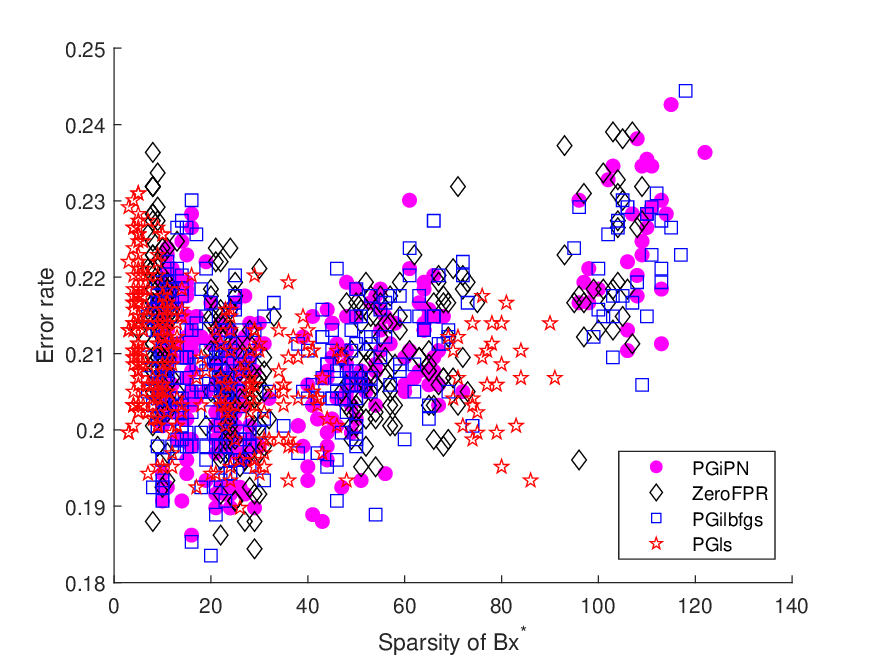}
	\caption{Scatter figure for all tested examples, recording the relationship of sparsity ($\|\widehat{B}x^*\|_0$) and the error rate of classification.}
	\label{figure-phonemewithzeroforandpg2}
\end{figure}

We see from Figure \ref{lam_time} that in terms of efficiency, PGiPN is always the best one, more than ten times faster than the other three solvers. The reason is that the other three solvers depend heavily on the proximal mapping of $g$, and its computation is a little time-consuming, which reflects the advantage of the projected regularized Newton steps in PGiPN. From Figure \ref{lam_err_rate}, when $\lambda_c = 1$, PGiPN reaches the smallest mean error rate among four solvers for 10 $\lambda_c$'s. When $\lambda_c$ is large ($>0.4$), PGiPN and PGilbfgs tend to outperform ZeroFPR and PG. Moreover, when $\lambda_c$ is small $(<0.1)$, the solutions returned by PG have the best error rate among four solvers. This is because $\widehat{B}x^*$ produced by PG is sparser than those of the other three solvers under the same $\lambda_c$, which can be observed from Figure \ref{figure-phonemewithzeroforandpg2}. For small $\lambda_c$, the solutions by the other three solvers are not sparse, leading to high error rate of classification.

\subsubsection{Recovery of blurred images\label{sec7.3.2}}
 Let $\overline{x}\in\mathbb{R}^n$ with $n = 256^2$ be a vector obtained by vectorizing a $256 \!\times\! 256$ image ``cameraman.tif'' in MATLAB, and be scaled such that all the entries belong to $[0,1]$. Let $A \in \mathbb{R}^{n\times n}$ be a matrix representing a Gaussian blur operator with standard deviation $4$ and a filter size of $9$, and the vector $b\in\mathbb{R}^m$ represents a blurred image obtained by adding Gauss noise $e \sim \mathcal{N}(0, \epsilon)$ with $\epsilon > 0$ to $A\overline{x}$, i.e., $b = A\overline{x} + e$. We apply model \eqref{model} with $f= \frac{1}{2}\|A\cdot - b\|^2$, $B = \widehat{B}$, $l = 0$ and $u = {\bf 1}$, to restore the blurred images.  We test five solvers, which are PGiPN, PGiPN(r), PGilbfgs, ZeroFPR and PG.  For PGiPN(r), we set $\eta_1 = 0.01, \eta_2 = 0.01$ in \eqref{relax-condition}. For all these five solvers, we employ $\lambda_1 = \lambda_2 = 0.0005 \times \|A^{\top}b\|_{\infty}$. 
 Under different $\epsilon$'s, we compare the performance of these five solvers in terms of required iterations (Iter), cpu time (Time), $F(x^*)$ (Fval), $\|x^*\|_0$ (xNnz), $\|\widehat{B}x^*\|_0$ (BxNnz) and the highest peak signal-to-noise ratio (PSNR), where
$ {\rm PSNR} := 10\log_{10} \left( \frac{n}{\|\overline{x} - x^*\|^2}\right).$ 
In particular, to check the effect of the Newton step, we record the iterations (or time) in the form $M(N)$, where $M$ means the total iterations (or time) and $N$ means the iterations (or time) in Newton step.
 PSNR measures the quality of the restored images. The higher PSNR, the better the quality of restoration. Table \ref{tab3} presents the numerical results. 

\begin{table}[!ht]
	\centering
	\caption{Numerical comparison of six solvers on recovery of blurred image with $\lambda_1 = \lambda_2 = 0.0005\|A^{\top}b\|_{\infty}$\label{tab3}}
	\begin{tabular}{ccccccccc}
		\toprule
		Noise  & & & PGiPN & PGiPN(r) & PGilbfgs &  PG & ZeroFPR  \\
		\midrule
		\multirow{6}{*}{\begin{tabular}[l]{@{}l@{}}$\epsilon = 0.01$\end{tabular}} &  & Iter & 379(3) & 123(6)  & 529(31)  & 796 & 361  \\
		&  & Time &  1.70e3(10.2) & {\bf\color{blue} 5.61e2(21.4)}  & 2.39e3(6.0)  & 3.43e3 & {\bf\color{red}2.35e4}  \\
		&   & Fval &  37.88 & {\bf \color{red}37.95} & 37.88 & 37.88 & {\bf \color{blue}37.77} \\
		&   & xNnz & 63858 & 63805 & 63858 & 63858& 63717 \\
		&   & BxNnz & 5767 &  5995 & 5776 & 5779 & 5834  \\
		&   & psnr & 25.90 & {\bf\color{red}25.77} & 25.90 & 25.90 & {\bf \color{blue}25.91} \\
		\cline{1-8}
		\multirow{6}{*}{\begin{tabular}[l]{@{}l@{}}$\epsilon = 0.02$\end{tabular}} &  & Iter & 281(4) & 109(6) & 457(38) & 853 & 286 \\
		&  & Time & 1.20e3(8.8) & {\bf\color{blue}4.74e2(13.2)} & 1.95e3(7.4) & 3.62e3 & {\bf\color{red} 1.82e4} \\
		&   & Fval & 45.98 & {\bf\color{red}46.05} & 45.98& 45.98 & {\bf\color{blue}45.83} \\
		&   & xNnz & 63495 & 63440 &63495  & 63495 & 63350 \\
		&   & BxNnz & 6098 & 6320 & 6098 & 6099 & 6143 \\
		&   & psnr &  25.41 &{\bf\color{red} 25.23} & {\bf \color{blue} 25.42} & {\bf \color{blue} 25.42} & 25.33  \\
		\cline{1-8}
		\multirow{6}{*}{\begin{tabular}[l]{@{}l@{}}$\epsilon = 0.03$\end{tabular}} &  & Iter &  234(3) & 94(3)  & 325(15) & 717 & 332 \\
		&  & Time & 9.8e2(6.5) & {\bf\color{blue}3.98e2(6.5)} & 1.36e3(6.7) & 2.97e3 & {\bf\color{red}1.88e4}  \\
		&   & Fval &  60.26 & {\bf\color{red}60.34} & 60.26 & 60.26 &{\bf\color{blue} 60.02} \\
		&   & xNnz & 63006 & 62944 & 63006 & 63006 & 62800  \\
		&   & BxNnz & 6594 & 6844 & 6597 & 6592 &  6710\\
		&   & psnr & {\bf\color{blue}24.90} & {\bf\color{red}24.67} & {\bf\color{blue}24.90} & {\bf\color{blue}24.90} &  24.76 \\
		\cline{1-8}
		\multirow{6}{*}{\begin{tabular}[l]{@{}l@{}}$\epsilon = 0.04$\end{tabular}} &  & Iter &  255(3) & 78(5)  & 360(19) & 526 & 230 \\
		&  & Time & 1.04e3(6.2) & {\bf\color{blue} 3.37e2(20.0)} & 1.45e3(3.9) & 2.18e3 & {\bf\color{red} 1.11e4} \\
		&   & Fval & 77.82 & {\bf \color{red}77.87} & 77.82 & 77.82 &{\bf\color{blue} 77.44} \\
		&   & xNnz & 62103 & 62002 & 62104 & 62104 & 61853 \\
		&   & BxNnz & 7267 & 7553 & 7268 & 7271 & 7427 \\
		&   & psnr & {\bf\color{blue}24.20} & {\bf\color{red} 23.85} & {\bf\color{blue}24.20} & {\bf\color{blue}24.20} & 24.00  \\
		\cline{1-8}
		\multirow{6}{*}{\begin{tabular}[l]{@{}l@{}}$\epsilon = 0.05$\end{tabular}} &  & Iter &  263(3) & 76(11)  & 389(29) & 688  & 168 \\
		&  & Time & 1.05e3(10.3) & {\bf\color{blue}3.30e2(28.5)} & 1.55e3(6.3) & 2.71e3 & {\bf\color{red}5.91e3} \\
		&   & Fval &  99.65 & {\bf\color{red}99.72} & 99.65 & 99.65 & {\bf\color{blue}98.93}  \\
		&   & xNnz & 61376 & 61286 & 61381 & 61381 & 60963 \\
		&   & BxNnz & 7955 & 8283 & 7955 &  7956 & 8240 \\
		&   & psnr & 23.36 & 23.00 & {\bf\color{blue}23.37} & {\bf\color{blue}23.37} & {\bf\color{red}22.87} \\
		\bottomrule
	\end{tabular}
\end{table}

From Table \ref{tab3}, PGiPN(r) always performs the best in terms of efficiency, which verifies the effectiveness of the acceleration scheme proposed in Section \ref{sec7.1.3}. PGiPN is faster than PGilbfgs, and PGilbfgs is faster than PG, supporting the effective acceleration of the Newton steps. However, ZeroFPR is the most time-consuming, even worse than PG, a pure first-order method. The reason is that ZeroFPR requires more line searches, and each line search involves a computation of the proximal mapping of $g$, which is expensive (2-5 seconds). 

Despite the superiority of efficiency, the solutions yielded by PGiPN(r) is not good. We also observe that $\|\widehat{B}x^*\|_0$ of PGiPN(r) is a little higher than those of PGiPN, PGilbfgs and PG, because PGiPN(r) runs few PG steps, so that its structured sparsity is not well reduced. Moreover, the PSNR is closely related to $\|\widehat{B}x^*\|_0$, and this leads to the weakest performance of PGiPN(r) in terms of PSNR. 
On the other hand, although ZeroFPR always outputs solutions with the smallest objective value, its PSNR is not as good as the objective value. The performance of PGiPN, PGilbfgs and PG in terms of the objective value and PSNR are quite similar. Taking the efficiency and the quality of the output into consideration, we conclude that PGiPN is the best solver for this test.

\section{Conclusions}\label{sec7.0}
In this paper, we proposed a hybrid of PG and inexact projected regularized Newton method for solving the fused $\ell_0$-norms regularization problem \eqref{model}. This hybrid framework fully exploits the advantages of PG method and Newton method, while avoids their disadvantages. We proved a global convergence by means of KL property, without assuming the uniformly positive definiteness of the regularized Hessian matrix, and also obtained a superlinear convergence rate under a H\"{o}lderian local error bound on the set of the second-order stationary points, without assuming the local minimality of the limit point. 

All PGiPN, ZeroFPR and PG have employed the polynomial-time algorithm for computing a point in the proximal mapping of $g$ when $B=\widehat{B}$, that we developed in this paper. Numerical tests indicate that our PGiPN not only produce solutions of better quality, but also requires 2-3 times less running time than PG and  ZeroFPR, where the latter mainly attributes to our subspace strategy when applying the projected regularized Newton method to solve the problems.

\bibliographystyle{siam}
\bibliography{references}
 \end{document}